\def\cleardoublepage{\clearpage\if@twoside \ifodd\c@page\else%
         \hbox{}%
     \thispagestyle{empty}%              % Empty header styles
     \newpage%
     \if@twocolumn\hbox{}\newpage\fi\fi\fi}
\let\cleardoublepage\clearpage
\newtheorem{thm}{Theorem}[section]
\newtheorem{cor}[thm]{Corollary}
\newtheorem{lem}[thm]{Lemma}
\newtheorem{den}[thm]{Definition}
\newtheorem{oss}[thm]{Remark}
\numberwithin{equation}{section}
\begin{document}

\title[The PME with large data on general Cartan-Hadamard manifolds]{The porous medium equation with large data \\ on Cartan-Hadamard manifolds \\ under general curvature bounds}

\author {Gabriele Grillo} \author[Matteo Muratori]{Matteo Muratori} \author{Fabio Punzo}

\address {Gabriele Grillo, Matteo Muratori, and Fabio Punzo: Dipartimento di Matematica, Poli\-tecnico di Milano, Piaz\-za Leonardo da Vinci 32, 20133 Milano, Italy}
\email{gabriele.grillo@polimi.it}
\email{matteo.muratori@polimi.it}
\email{fabio.punzo@polimi.it}

%\address{Matteo Muratori: Dipartimento di Matematica ``F. Casorati'', Universit\`a degli Studi di Pavia, via A. Ferrata 5, 27100 Pavia, Italy}
%\email{matteo.muratori@unipv.it}
%
%\address{Fabio Punzo: Dipartimento di Matematica e Informatica, Universit\`a della Calabria, via Pietro Bucci 31B, 87036 Rende (CS), Italy}
%\email{fabio.punzo@unical.it}

\makeatletter
\@namedef{subjclassname@2020}{%
  \textup{2020} Mathematics Subject Classification}
\makeatother

\subjclass[2020]{Primary: 35R01. Secondary: 35A01, 35A02, 35B44, 58J05, 58J35.}

\keywords{Porous medium equation; Cartan-Hadamard manifolds; large data; blow-up; curvature bounds; stochastic completeness.}

%
%\address {Gabriele Grillo, Matteo Muratori: Dipartimento di Matematica, Politecnico di Milano, Piaz\-za Leonardo da Vinci 32, 20133 Milano, Italy}
%
%\email {gabriele.grillo@polimi.it}
%
%
%\email {matteo.muratori@polimi.it}

%\vskip-20pt

\begin{abstract}
We consider very weak solutions of the Cauchy problem for the porous medium equation on Cartan-Hadamard manifolds, that are assumed to satisfy general curvature bounds and to be stochastically complete. We identify a class of initial data that can grow at infinity at a prescribed rate, which depends on the assumed curvature bounds through an integral function, such that the corresponding solution exists at least on $[0,T]$ for a suitable $T>0$. The maximal existence time $T$ is estimated in terms of a suitable weighted norm of the initial datum. Our results are sharp, in the sense that slower growth rates yield global existence, whereas one can construct data with critical growth for which the corresponding solutions blow up in finite time. Under further assumptions, uniqueness of very weak solutions is also proved, in the same growth class.
\end{abstract}

\maketitle

%\vskip12pt
%\noindent{\bf R\'esum\'e}. On montre l'existence et l'unicit\'e de solutions tr\`es faibles du probl\`eme de Cauchy pour l'\'equation des milieux poreux sur vari\'et\'es de Cartan-Hadamard qui satisfont des bornes inf\'erieures sur la courbure de Ricci, avec donn\'ees initiales qui peuvent cro\^{\i}tre \`a l'infini \`a un taux contr\^{o}l\'e, qui d\'epend de mani\`ere cruciale des bornes de courbure. En particulier, la pression \`a l'infini peut cro\^{\i}tre au plus lin\'eairement sur l'espace hyperbolique, et quadratiquement sur l'espace Euclidien ainsi que dans une classe d'espaces dont la courbure de Ricci tend vers z\'ero de mani\`ere suffisamment rapide \`a l'infini.
%
%Les conditions de courbure qu'on impose sont optimales en ce qui concerne l'unicit\'e dans la mesure o\`u, si elles ne sont pas satisfaites, en g\'en\'eral il peut y avoir une infinit\'e de solutions du probl\`eme de Cauchy, m\^{e}me avec des donn\'ees initiales born\'ees. En plus, en imposant des bornes sup\'erieures sur les courbures sectionnelles qui correspondent \`a celles inf\'erieures, on fournit une estimation pr\'ecise du temps maximal d'existence, et on d\'emontre qu'en g\'en\'eral les solutions n'existent pas si la donn\'ee initiale cro\^{\i}t trop vite \`a l'infini. Cela prouve, en particulier, que le taux de croissance de la donn\'ee initiale qu'on consid\`ere est optimal pour l'existence. Finalement, on montre un r\'esultat d'explosion ponctuelle pour une classe particuli\`ere de vari\'et\'es et de donn\'ees initiales.

%\selectlanguage{french}

% \normalsize

% \tableofcontents

\section{Introduction}
We consider very weak (i.e.~distributional) solutions
of the following Cauchy problem:
\begin{equation}\label{e64}
\begin{cases}
u_t = \Delta \! \left( u^m \right) & \text{in } M\times (0,T)\,, \\
u = u_0 & \text{on } M \times \{ 0 \} \, ,
\end{cases}
\qquad m>1 \, ,
\end{equation}
where $M$ is a $N$-dimensional Riemannian manifold, supposed to be complete, simply connected, and to have nonpositive sectional curvature,  namely to be a \emph{Cartan-Hadamard manifold}. In the above differential equation, $\Delta$ is the Laplace-Beltrami operator on $M$, and since we assume that $m>1$ we are dealing with the so-called \emph{porous medium equation} on $M$. Note that, when considering sign-changing solutions, we implicitly define $ u^m := |u|^{m-1}u $, as it is customary in the literature and it guarantees the parabolicity of the problem. We are interested in allowing the initial datum $u_0$ to \emph{grow} and be \emph{unbounded} at infinity, so that in principle
the interval $[0, T)$ on which the solution exists may be finite and depend on $ u_0 $.

Our main goal will be to determine a rate that is \emph{critical} for the existence of solutions to \eqref{e64}, in the sense that if the datum grows with a strictly slower rate, then the corresponding solution is \emph{global} in time (i.e.~$ T=+\infty $), whereas one can construct examples in which the rate is critical and pointwise \emph{blow-up} occurs in finite time. We also aim at establishing a quantitative relation between the maximal existence time and a suitable norm of the initial datum that accounts for such a growth rate. Note that, in general, there may exist no solution at all to \eqref{e64} if one does not assume appropriate growth conditions on $u_0$ (see e.g.~\cite[Section 2.7]{GMP-pures}).

\smallskip

This kind of problems has been considered in full detail in the Euclidean case. In fact, B\'enilan, Crandall and Pierre were able in \cite{BCP} to identify, when $ M \equiv \mathbb R^N$, the optimal condition on the initial datum $u_0$ ensuring the existence of a solution. More precisely, it is shown there that if $u_0$ is locally integrable and satisfies
\begin{equation}\label{q52}
\sup_{R\geq 1} \frac 1{R^{N+\frac 2{m-1}}} \int_{B_R} \left| u_0(x) \right| dx  < + \infty \, ,
\end{equation}
then there exists a distributional solution $u$ of problem \eqref{e64}. Moreover, solutions corresponding to such initial data are proved to be unique in a suitable class. Besides, the set of data determined by \eqref{q52} is \emph{sharp} since Aronson and Caffarelli showed in \cite{AC} that initial traces of general solutions to the differential equation appearing in \eqref{e64} must necessarily fulfill \eqref{q52}.

In the Riemannian setting, there is a number of challenging additional difficulties, among which we single out the absence of a satisfactory analogue of the celebrated \it Aronson-B\'enilan inequality \rm (see \cite{AB}) for nonnegative solutions:
\[
\Delta\!\left(u^{m-1}\right) \ge -\frac Ct
\]
for a suitable $C=C(m,N)>0$, to be understood in the sense of distributions. While some Riemannian local analogues of such a kind of inequalities have been established in \cite{LNVV} and subsequently in \cite{HHL}, it does not seem straightforward to use such variants to prove well-posedness results in the spirit of \cite{BCP}. Nonetheless, closely related results in the case of Cartan-Hadamard manifolds have been investigated in \cite{GMP-pures} through different methods that, although not allowing to find full analogues of the integral condition \eqref{q52}, can be pushed to yield the correct growth rate at least in a pointwise sense. In fact, it is assumed in \cite{GMP-pures} that the (radial) Ricci curvature $\mathrm{Ric}_o$ with respect to a given pole $ o \in M $ satisfies the lower bound
\begin{equation}\label{power}
\mathrm{Ric}_o(x)\ge - C_0 \left[1+d(x,o)\right]^{\gamma} \qquad \forall x \in M \, ,
\end{equation}
for some constants $C_0>0$ and $ \gamma\in(-\infty,2)$, where we let $d(\cdot,\cdot)$ denote the geodesic distance on $ M $. The identified growth class of data depends explicitly on the value of the exponent $\gamma$: more precisely, existence (and uniqueness) of solutions is proved under the pointwise bound
\begin{equation}\label{power-growth}
\left| u_0(x) \right| \le C \left[ 1+d(x,o)\right]^{\frac\sigma{m-1}} \qquad \text{for a.e. } x \in M \, ,
\end{equation}
where $C>0$ is a suitable constant depending on $ u_0 $, and $\sigma:=\frac{2-\gamma}{2}\wedge 2$ (we set $ a \wedge b := \min\{a,b\} $). This condition is shown to be optimal in terms of global existence for a slower growth, blow-up phenomena for the critical growth and suitable nonexistence results for a faster growth. In the Euclidean case one can choose $ \gamma $ arbitrarily small, so that $\sigma=2$ and the growth class is in agreement with \eqref{q52}, whereas in the particularly significant case of the hyperbolic space $\mathbb{H}^N$ one has $ \gamma=0 $ and thus $\sigma=1$. Note that in the setting of \cite{GMP-pures}, it is important that the curvature satisfies a lower bound that is confined to be of \it power type\rm, and also that the critical case $\gamma=2$ was not addressed due to technical difficulties: while it is formally included in some of the stated results, the corresponding class of data according to \eqref{power-growth} turns out to be $L^\infty(M)$ (the constant $\sigma$  vanishes for $\gamma=2$), a space on which existence of solutions can be shown by standard methods, see the monograph \cite{Vaz07}. However, the $  L^\infty(M)$ class is not optimal: this was (partially) settled later in \cite{MP}.

The case of manifolds whose negative curvature diverges \emph{quadratically} as $d(x,o)$ tends to $+\infty$ is, in several senses, a threshold one. In the framework of nonlinear evolution equations, this can be seen by noticing significant differences between the long-time behavior
of solutions to the porous medium equation when $\gamma\le2$, see \cite{GMV1}, and the one emerging when $ \gamma>2 $ (superquadratic curvature), see \cite{GMV2}, but it is also apparent from the study of the (linear) heat equation. Indeed, we recall that a manifold is called \it stochastically complete \rm if
\begin{equation*}\label{stochcompl}
\int_M p(x,y,t) \, d\mu(y) =  1 \qquad \forall (x,t) \in M \times \mathbb{R}^+ \, ,
\end{equation*}
where $p(x,y,t)$ is the \emph{heat kernel} of $M$ and $d\mu$ is its Riemannian volume measure. Equivalently, a manifold is stochastically complete if the lifetime of the \emph{Brownian motion} acting on $M$ is almost surely $+\infty$. We refer e.g.~to \cite{Grig, Grig3} for geometric
 conditions on $M$ ensuring that stochastic completeness holds (see also Section \ref{lapl-comp}), but we comment that it does hold in the above setting if the radial Ricci curvature diverges \it at most quadratically\rm, namely \eqref{power} is satisfied for some $ \gamma \le 2 $. Informally, we may say that if the curvature becomes negative too fast at spatial infinity, then the Brownian motion accelerates so quickly that mass can be lost, still at spatial infinity. We also comment that stochastic completeness, a purely linear property, has recently been proved in \cite{GIM} to be equivalent to uniqueness of \it bounded \rm solutions to the \it fast diffusion equation\rm, namely to \eqref{e64} in the case $m<1$, and that, conversely, uniqueness fails even for bounded data if the manifold is stochastically incomplete. In particular, the quadratic divergence of the curvature to $-\infty$ turns out to be a threshold for that property as well.

\smallskip

Throughout we will thus require that $M$ is stochastically complete, or more precisely that it satisfies a lower curvature bound implying such a property, and provide improvements and extensions to the results of \cite{GMP-pures,MP}, that can be informally described as follows:

\begin{itemize}
\item the curvature bound \eqref{power} is replaced by a much more general bound, see \eqref{thmexi-ricci}, compatible with $M$ being stochastically complete, therefore considerably enlarging the class of manifolds dealt with;

\item the critical case $\gamma=2$ in \eqref{power} is included, providing existence and uniqueness (under further requirements) for data whose growth is of \emph{logarithmic} type.
\end{itemize}

Our main results are the following. First of all, we prove in Theorem \ref{thmexi} an existence result for solutions to \eqref{e64} on stochastically complete Cartan-Hadamard manifolds, satisfying a curvature bound from below which is much more general than \eqref{power}. The maximal existence time $T$ of solutions is estimated in terms of an appropriate weighted $ L^\infty $ norm of the initial datum, accounting for a precise critical growth rate, and in particular it is shown that data with a subcritical growth give rise to global solutions. We point out that such a rate is strictly related to the curvature bound by means of a special radial function $ H $ whose unboundedness is \emph{equivalent} to stochastic completeness on \emph{model manifolds}, a significant subclass of spherically symmetric Riemannian manifolds (see Sections \ref{RG}--\ref{lapl-comp}). For data having a critical growth, we exhibit examples of solutions, at least on model manifolds, that blow up pointwise everywhere in \emph{finite time}. This is the content of Theorem \ref{opt-blow}.

As concerns uniqueness, we prove three different results, under different additional assumptions on the manifold at hand or on the data. The first one, namely Theorem \ref{thm-modelli}, deals with model manifolds only. A further technical condition on the function $ H $ has to be required, see \eqref{ipotesiH-enunciato}. Nevertheless, it is worth observing that the latter is satisfied in the critical quadratic case, see Remark \ref{models-example1}. The second result, Theorem \ref{thm-non-modelli}, is a generalization of Theorem \ref{thm-modelli}, in that it allows the manifold to be non radially symmetric, provided it is not too far from being so in terms of curvature. It is important to notice that the case of a negative curvature with quadratic growth is included in this theorem as well, see Corollary \ref{thmuniq-qq}. Our last result, Theorem \ref{thm-non-modelli-fabio}, ensures uniqueness on general Cartan-Hadamard manifolds under a slightly different assumption on the admissible growth rate of the data (and solutions). Such a growth condition coincides with the one appearing in Theorem \ref{thmexi} when the kind of curvature bound assumed on $M$ is exactly of power type, as in \eqref{power}, see Remarks \ref{rem-fb} and \ref{models-example1}. However, this equivalence \emph{fails} in the critical case $\gamma=2$, as the admissible growth rate for uniqueness identified in Theorem \ref{thm-non-modelli-fabio} turns out to be slower.

The paper is organized as follows. Section \ref{Stat} provides the basic geometric and analytic background. Section \ref{existence} contains the proofs of the main results concerning existence and blow-up, that is Theorems \ref{thmexi} and \ref{opt-blow}. Section \ref{uniqueness} is devoted to the proofs of all our uniqueness results, namely Theorems \ref{thm-modelli}, \ref{thm-non-modelli}, \ref{thm-non-modelli-fabio} and Corollary \ref{thmuniq-qq}.

\section{Preliminary material and statements of the main results}\label{Stat}

This section is devoted to a collection of essential basic facts on Riemannian manifolds (especially of Cartan-Hadamard type), mostly regarding the Laplace-Beltrami operator, polar coordinates and Laplacian/volume comparison theorems. We will then introduce a suitable functional space, which is very useful to our purposes, and state the main results concerning \eqref{e64}.

\subsection{Notations for Cartan-Hadamard manifolds and models}\label{RG}
Given a Riemannian manifold $M$ of dimension $ N \in \mathbb{N}_{\ge 2} $, we let $\Delta$
denote the Laplace-Beltrami operator (sometimes written $ \Delta_M $ to avoid ambiguity), $\nabla$ the Riemannian
gradient and $d\mu$ the Riemannian volume measure on $M$. We will use the symbol $ d\mu_{N-1} $ to denote the related \emph{surface} or \emph{boundary area}, namely the $ (N-1) $-dimensional Hausdorff measure associated with $ d\mu $.
%In \cite{Stric} it is shown that $-\Delta$, defined on
%$C^\infty_c(M)$, is essentially self-adjoint in $L^2(M)$\,. In
%particular, this implies that if $f\in L^2(M)$ with $\Delta f\in
%L^2(M)$, then $\nabla f\in L^2(M)$, and there exists a sequence of
%functions $\{f_j\}\subset C^\infty_c(M)$ such that
%\begin{equation*}\label{e17}
%f_j\to f\,,\quad \nabla f_j\to f\,,\quad \Delta f_j\to \Delta f\quad \textrm{in}\;\; L^2(M)\,.
%\end{equation*}
%In addition, for any $f, g\in L^2(M)$ with $\Delta f, \Delta g\in L^2(M)$ we have
%\begin{equation*}\label{e18}
%\int_M f\,\Delta g\, d\mathcal V \,=\,- \int_M \langle \nabla f, \nabla g \rangle\, d\mathcal V\,=\, \int_M g \Delta f\, d\mathcal V\,.
%\end{equation*}

Throughout we will only consider \emph{Cartan-Hadamard} manifolds, that is complete and simply connected Riemannian manifolds with everywhere nonpositive sectional curvature (see \cite{GW} for an excellent reference). It is well known that on any such a manifold the \emph{cut locus} of every point $ o \in M $ is empty, so that in particular it is possible to define {\it polar coordinates} with respect to $o$. As a result, all $ x \in M\setminus \{o\}$ can uniquely be represented by the positive quantity $r \equiv  r(x) := d(x, o)$, namely the geodesic distance to $ o $, and an angle $\theta\in \mathbb S^{N-1}$. If we let $B_R$ denote the geodesic ball of radius $R>0$ centered at $o$ and $ S_R:=\partial B_R $, it turns out that
\begin{equation}\label{n51}
\mu_{N-1}(S_R)\,=\, \int_{\mathbb S^{N-1}}A(R, \theta) \, d\theta_{\mathbb{S}^{N-1}}\,, \qquad \mu(B_R) = \int_0^R \mu_{N-1}(S_r) \, dr \, ,
\end{equation}
where $ d\theta_{\mathbb{S}^{N-1}} $ is the canonical volume measure on the unit sphere, for an appropriate positive and smooth enough function $A(r,\theta)$ which is related to the metric tensor, see \cite[Section 3.1]{Grig}. Moreover, the Laplace-Beltrami operator (or simply the \emph{Laplacian}) in polar coordinates reads
\begin{equation}\label{e1}
\Delta = \frac{\partial^2}{\partial r^2} + \mathsf{m}(r, \theta) \, \frac{\partial}{\partial r} + \Delta_{S_{r}} \, ,
\end{equation}
where
\begin{equation}\label{def-m}
\mathsf{m}(r, \theta):=\frac{\partial }{\partial r} \log A(r,\theta)
\end{equation}
and $ \Delta_{S_{r}} $ is the Laplace-Beltrami operator on the submanifold $ S_{r} $. Thanks to \eqref{e1}, one can easily verify that $ \mathsf{m}(r,\theta) $ is in fact the Laplacian of the {distance function} $ x \mapsto r(x) $.

A very special subclass of Cartan-Hadamard manifolds (or more in general of manifolds that possess a pole) is formed by the so-called \emph{Riemannian models} or \emph{model manifolds}, that is spherically symmetric manifolds for which the metric can be written as follows:
\begin{equation*}\label{e2}
ds^2  = dr^2+\psi(r)^2 \, d\theta^2 \, ,
\end{equation*}
where $d\theta^2$ is the canonical metric on $\mathbb S^{N-1}$ and $ \psi $ is a smooth positive function on $ (0,+\infty) $ such that
$$
\psi(0)=0 \qquad \text{and} \qquad \psi'(0)=1 \, ,
$$
see \cite[Section 3.1]{Grig}. In this case, we will write $ M \equiv  M_\psi $ and say that $ \psi $ is the \emph{model function} associated with $ M_\psi $. Due to the particular warped structure of the metric, it is not difficult to show that $ A(r,\theta) = \psi(r)^{N-1} $ and that \eqref{e1} takes the simpler form
\begin{equation}\label{mm43}
\Delta = \frac{\partial^2}{\partial r^2} + (N-1) \, \frac{\psi'(r)}{\psi(r)} \, \frac{\partial}{\partial r} + \frac1{\psi(r)^2} \, \Delta_{\mathbb S^{N-1}} \, .
\end{equation}
Note that, thanks to polar coordinates, it is possible to introduce \emph{radial functions}, namely functions on $ M $ of the type $ x \mapsto f(r(x)) $ for a suitable real function $ f $ defined on $ [0,+\infty) $. When no ambiguity occurs, for the sake of readability, we will not explicitly distinguish $ f(r) $ from $ f(r(x)) $. If a radial function $f_1$ is naturally defined on a manifold $ M_1 $ (typically a model), we will say that it is \emph{radially transplanted} to another manifold $ M_2 $ when considering the radial function $f_2$ on $ M_2 $ induced by the same real function.

\subsection{Curvatures and Laplacian comparison}\label{lapl-comp}

Given a pole $ o \in M $ as in Section \ref{RG} and any $x\in M\setminus\{o\}$, we let $\mathrm{Ric}_o(x)$ denote the \emph{Ricci curvature} at $x$ in the radial direction identified by $\frac{\partial}{\partial r}$ (also known as radial Ricci curvature). If $\omega$ is a pair of tangent vectors from $T_x M$ having the form
$\left(\tfrac{\partial}{\partial r} , V \right)  $, where $V$ is an arbitrary unit vector orthogonal to $\frac{\partial}{\partial r}$, we let $\mathrm{K}_{\omega}(x)$ denote the \emph{sectional curvature} at $ x $ corresponding to the $2$-section (or plane) spanned by $\omega$ (also known as radial sectional curvature).

There is a number of well-known \emph{comparison} results for the Laplacian (and more in general for the Hessian) of the distance function under curvature bounds, which involve model manifolds, see e.g.~\cite[Section 2]{GW} or \cite[Sections 1.2.3--1.2.5]{MRS}. More precisely, if
\begin{equation}\label{e3a}
\mathrm{K}_{\omega}(x) \leq - \frac{\psi''(r)}{\psi(r)} \qquad \forall x \equiv (r,\theta) \in \mathbb{R}^+ \times \mathbb{S}^{N-1}
\end{equation}
for some model function $\psi$ and all $2$-section $ \omega $ as above, then
\begin{equation}\label{e3}
\mathsf{m}(r, \theta) \geq (N-1) \, \frac{\psi'(r)}{\psi(r)} \qquad \forall (r,\theta) \in \mathbb{R}^+ \times \mathbb{S}^{N-1} \, .
\end{equation}
Since a Cartan-Hadamard manifold has a nonpositive sectional curvature, we have that \eqref{e3a} is trivially satisfied with the choice
$\psi(r)=r$,  therefore \eqref{e3} entails
\begin{equation}\label{e5}
\mathsf{m}(r, \theta) \geq  \frac{N-1}{r} \qquad \forall (r,\theta) \in \mathbb{R}^+ \times \mathbb{S}^{N-1} \, .
\end{equation}
Conversely, if
\begin{equation}\label{e3c}
\mathrm{Ric}_{o}(x) \geq -(N-1) \, \frac{\psi''(r)}{\psi(r)} \qquad \forall x \equiv (r,\theta) \in \mathbb{R}^+ \times \mathbb{S}^{N-1}
\end{equation}
for some model function $\psi $, then
\begin{equation}\label{e4}
\mathsf{m}(r, \theta) \leq (N-1) \, \frac{\psi'(r)}{\psi(r)} \qquad \forall (r,\theta) \in \mathbb{R}^+ \times \mathbb{S}^{N-1} \, .
\end{equation}
Note that \eqref{e4} holds in greater generality, whereas for \eqref{e3} it is crucial that $ M $ is a Cartan-Hadamard manifold or at least that the cut locus of $o$ is empty, see again \cite{MRS}. Under \eqref{e3a}, we can infer that
\begin{equation*}%\label{n50z}
\mu_{N-1}(S_R) \geq \left| \mathbb{S}^{N-1} \right| \psi(R)^{N-1} \qquad \forall R>0 \, ,
\end{equation*}
while under \eqref{e3c} it follows that
\begin{equation}\label{n50a}
\mu_{N-1}(S_R) \leq \left| \mathbb{S}^{N-1} \right| \psi(R)^{N-1} \qquad \forall R>0 \, ,
\end{equation}
where $\left| \mathbb{S}^{N-1} \right|  $ stands for the volume of the $ (N-1) $-dimensional unit sphere. Such inequalities can be deduced from \eqref{e3} and \eqref{e4}, respectively, recalling \eqref{n51} and \eqref{def-m}. Estimate \eqref{n50a} is in fact a special case of the celebrated \emph{Bishop-Gromov} comparison theorem \cite[Section 1.2.4]{MRS}.

We point out that, if  $M \equiv  M_\psi$ is a model manifold, then for any $ x\equiv(r, \theta) $ inequalities \eqref{e3a} and \eqref{e3c} actually become identities:
\begin{equation}\label{e1ce}
\mathrm{K}_{\omega}(x)=-\frac{\psi''(r)}{\psi(r)} \, , \qquad \mathrm{Ric}_{o}(x)=-(N-1) \, \frac{\psi''(r)}{\psi(r)} \, .
\end{equation}
Moreover, the sectional curvature w.r.t.~planes \emph{orthogonal} to $\frac{\partial}{\partial r}$ is equal to
\begin{equation}\label{mm36}
\frac{1-\left[\psi'(r)\right]^2}{\psi(r)^2} \, .
\end{equation}
In particular, from \eqref{e1ce}--\eqref{mm36} it is readily seen that a necessary and sufficient condition for a model function $ \psi $ to give rise to a Cartan-Hadamard manifold $ M_\psi $ is $ \psi'' \ge 0 $, or equivalently that $ \psi $ be convex.

A quantity that will have a central role in our analysis is the following:
\begin{equation}\label{H-definition}
H(r) := \int_0^r \frac{\int_0^s \psi(t)^{N-1} \, dt}{\psi(s)^{N-1}} \, ds \qquad \forall r \ge 0 \, ,
\end{equation}
namely the primitive of the ``volume/surface area ratio'' function of the model manifold $ M_\psi $ associated with $ \psi $. Note that $ H $ is by construction nondecreasing and, as observed in \cite[Lemma 4.2]{GMP-transac}, satisfies the key inequality
\begin{equation}\label{eq-der-H}
\left[ H'(r) \right]^2 \le 2 H(r) \qquad \forall r \ge 0
\end{equation}
provided $ \psi $ is nondecreasing, which is always the case if $ M_\psi $ is Cartan-Hadamard. Moreover, the condition
\begin{equation}\label{eq-H-unbounded}
\lim_{r \to +\infty} H(r) = +\infty
\end{equation}
turns out to be equivalent to the \emph{stochastic completeness} of $ M_\psi $ \cite[Proposition 3.2]{Grig}. We refer to \cite[Appendix B]{GIM} for a brief review of the connections between such an important property of a (general) manifold and curvature or volume bounds. Accordingly, we will say that $ M_\psi $ is an $ N $-dimensional stochastically complete model manifold if \eqref{eq-H-unbounded} holds.

\subsection{Growth conditions and functional spaces}\label{sf}

If $ M $ is an $N$-dimensional Cartan-Hadamard manifold that satisfies the curvature bound \eqref{e3c}, then, as we will see in the sequel, the ``optimal'' class of data for which  problem \eqref{e64} is well posed is given by all measurable functions $ f $ on $M$ such that
\begin{equation}\label{e40z}
\left|f(x)\right| \leq C \left[ H(r(x))+1\right]^{\frac 1 {m-1}} \qquad \text{for a.e. } x \in M \, ,
\end{equation}
for some $ C \ge 0 $ that in general depends on $f$, where $ H $ is defined in \eqref{H-definition}. We let $ X_{\infty,\psi} $ denote such a functional space; note that, by virtue of \eqref{eq-H-unbounded}, in the stochastically-complete case \eqref{e40z} becomes a constraint on the possible growth rate of $ | f(x) | $ as $ r(x)\to+\infty $.  For later purposes, for any value of the parameter $ b \ge 0 $ we endow $ X_{\infty,\psi} $ with the norm
\begin{equation*}\label{e40-norm}
\left\| f \right\|_{\infty,b} := \left\| \frac{f}{\left( H+1+b \right)^{\frac 1 {m-1}}} \right\|_{L^\infty(M)} ,
\end{equation*}
which makes it a Banach space. It is readily seen that all such norms are equivalent, and by the definition of $ \| \cdot \|_{b,\infty} $ we have
\begin{equation*}\label{e40-bis}
\left|f(x)\right| \leq \left\| f \right\|_{\infty,b}  \left[ H(r(x))+1+b\right]^{\frac 1 {m-1}} \qquad \text{for a.e. } x \in M \, ,
\end{equation*}
so that $ \| f \|_{\infty,0} $ is the smallest constant $ C \ge 0 $ for which \eqref{e40z} holds. Unless otherwise specified, when referring to $ X_{\infty,\psi} $ as a Banach space we will implicitly assume that it is endowed with $ \| \cdot \|_{\infty,0} $. Finally, a calculus exercise shows that $ b \mapsto \| f \|_{\infty,b} $ is nonincreasing and the following identity holds:
\begin{equation}\label{e40-limsup-final}
\lim_{b \to +\infty}  \left\| f \right\|_{\infty,b} = \underset{r(x)\to+\infty}{\operatorname{ess}\limsup} \, \frac{\left| f(x) \right|}{\left[ H(r(x)) \right]^{\frac 1 {m-1}}} \, .
\end{equation}
In the sequel, in order to lighten the notation, at some points we will drop the explicit dependence of a function on its variables, at least when no ambiguity occurs.

\subsection{Existence, blow-up and uniqueness results}\label{sect: exuni}

Let us first provide our definition of (very weak) solution to \eqref{e64}. Throughout, when simply referring to ``solutions'', we will implicitly mean those satisfying \eqref{e64} according to the following.

\begin{den}\label{defsol}
Let $T>0$ and $ u_0 \in L^\infty_{{loc}}(M) $. We say that $u\in L^\infty_{{loc}}(M\times [0, T))$ is a very weak (or distributional) solution to problem \eqref{e64} if the identity
\begin{equation}\label{q50}
-\int_0^T \int_M u\, \varphi_t\,  d\mu dt  = \int_0^T \int_M u^m \, \Delta \varphi\, d\mu dt + \int_M u_0(x) \, \varphi(x,0)\, d\mu(x)
\end{equation}
holds for all $\varphi\in C^\infty_c(M\times [0, T))$.
\end{den}

When referring to ``subsolutions'' or ``supersolutions'' instead, we mean functions satisfying \eqref{q50} with the equality sign replaced by ``$ \le $'' and ``$ \ge $'', respectively, for arbitrary nonnegative test functions.

If $ u $ is a solution according to the above definition, we will say that it admits an \emph{extension} if there exists some $ T_\ast>T $ and a function $ v \in L^\infty_{{loc}}(M\times [0, T_\ast)) $ satisfying \eqref{q50} up to $ T = T_\ast $, such that $ u = v $ a.e.~in $ M \times (0,T) $.

We can now state our main existence result.

\begin{thm}[Existence]\label{thmexi}
Let $ M $ be an $ N $-dimensional Cartan-Hadamard manifold such that
\begin{equation}\label{thmexi-ricci}
\mathrm{Ric}_o(x) \ge -(N-1) \, \frac{\psi''(r)}{\psi(r)} \qquad \forall x \equiv (r,\theta) \in \mathbb{R}^+ \times \mathbb{S}^{N-1}
\end{equation}
for some $ o \in M $, where $\psi$ is the model function associated with an $N$-dimensional stochastically complete Cartan-Hadamard model manifold. Let $u_0 \in X_{\infty,\psi}$ and fix $ b \ge 0 $. Then there exists a solution $u$
to problem \eqref{e64} with
\begin{equation*}\label{n2}
T = \frac{c_1}{\left\| u_0 \right\|_{\infty,b}^{m-1}} \,  ,
\end{equation*}
%for some constant $\tau_0>0$ only depending on $N, m, \sigma, C_0, r.$
%which satisfies the following estimate:
%\begin{equation}\label{e1z}
%\| u(t) \|_{\infty, r} \leq C_1 \,  \| u_0\|_{\infty, r} \quad \textrm{for a.e. } t \in (0,T) \, ,
%\end{equation}
%\begin{equation}\label{m31q}
%\| u(t)\|_{\infty, r}^{m-1} \leq \frac{\bar C}{t^{\lambda(m-1)}}\| u_0 \|_{1,r}^{\frac{2\lambda(m-1)}{N}} \quad \textrm{for all}\;\; 0<t<T_0\,,
%\end{equation}
%where
%
%in addition,
%\begin{equation}\label{m30q}
%\|u(t) \|_{1, r} \leq \bar C \|u_0 \|_{1, r} \quad \textrm{for all}\;\; 0<t<T_0\,,	
%\end{equation}
where $c_1$ is a positive constant depending only on $ m $. In addition, the estimate
%$u$ satisfies the pointwise estimate
%\begin{equation}\label{thm-limit-abs}
%\left| u(x,t) \right| \le \left( 1 - \frac{t}{T} \right)^{-\frac{1}{m-1}} \left\| u_0 \right\|_{\infty,r} \left( r^2+ \rho(x)^2 \right)^{\frac{\sigma}{2(m-1)}} \quad \textrm{for a.e. } (x,t) \in M \times (0,T) \, .
%\end{equation}
\begin{equation}\label{thm-norms}
\left\| u(t) \right\|_{\infty,b} \le c_2  \left( 1 - \frac{t}{T} \right)^{-\frac{1}{m-1}} \left\| u_0 \right\|_{\infty,b} \qquad \text{for a.e. } t \in (0,T)
\end{equation}
holds, where $ c_ 2>1 $ is another constant depending only on $m$.
%\begin{equation}\label{thm-limsup}
%\limsup_{\rho(x)\to\infty} \frac{\left| u(x,t) \right|}{\rho(x)^{\frac{\sigma}{m-1}}} \le \left( 1 - \frac{t}{T} \right)^{-\frac{1}{m-1}} \, \limsup_{\rho(x)\to\infty} \frac{\left| u_0(x) \right|}{\rho(x)^{\frac{\sigma}{m-1}}} \quad \textrm{for a.e. } t \in (0,T)  \, .
%\end{equation}

Furthermore, the solution $u$ can be extended up to the time
\begin{equation}\label{thm-tmax}
{T}  = c_1 \left( \underset{r(x)\to+\infty}{\operatorname{ess}\limsup} \, \frac{\left| u_0(x) \right|}{ \left[H(r(x))\right] ^{\frac 1 {m-1}}} \right)^{-(m-1)}  ,
\end{equation}
such an extension being independent of $ b $. In particular, if
$$
\underset{r(x)\to+\infty}{\operatorname{ess}\limsup} \, \frac{\left| u_0(x) \right|}{ \left[H(r(x))\right] ^{\frac 1 {m-1}}}  = 0
$$
then problem \eqref{e64} admits a global solution.
%Besides, if $u_0$ is such that $\lim_{r\to+\infty} \|u_0\|_{\infty,r}=0$, then the corresponding solution is global in time and satisfies $\lim_{r\to+\infty}\|u(t)\|_{\infty,r}=0$ for a.e.~$t>0$. In particular, for all initial data $ u_0 \in X_{\infty,r} $ such that
%\[
%\rho(x)^{-\frac{\sigma}{m-1}} \, u_0(x) \to 0 \quad \textrm{as } \rho(x) \to \infty \, ,
%\]
%there is global existence and the corresponding solution $u$ satisfies
%\begin{equation}\label{slow}
%\rho(x)^{-\frac{\sigma}{m-1}} \, u(x,t) \to 0 \quad \textrm{as } \rho(x) \to \infty \, , \ \textrm{for a.e. } t>0 \, .
%\end{equation}
\end{thm}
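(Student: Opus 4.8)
The plan is to produce the solution as a locally uniform limit of solutions of homogeneous Dirichlet problems on the geodesic balls $B_n$, keeping everything under control by means of an explicit radial supersolution manufactured out of the function $H$. The only properties of $H$ that are needed are the identity $H''(r)+(N-1)\tfrac{\psi'(r)}{\psi(r)}H'(r)=1$ for $r>0$ — equivalently $\Delta_{M_\psi}H\equiv 1$, which is immediate from the definition \eqref{H-definition} — and the inequality $(H')^2\le 2H$ recorded in \eqref{eq-der-H}. It is precisely the availability of these two facts, rather than of power-type estimates, that makes the general curvature bound \eqref{thmexi-ricci} tractable.

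\emph{The supersolution.} First I would seek a supersolution of the form $\overline u(x,t):=A(t)\,[H(r(x))+1+b]^{\frac1{m-1}}$ with $A>0$ to be chosen. Writing $f:=[H+1+b]^{\frac1{m-1}}$ and computing $\Delta(f^m)=(f^m)''+\mathsf{m}(r,\theta)(f^m)'$ from \eqref{e1}, one finds
\[
\Delta(f^m)=\frac{m}{m-1}\,f\left[\frac{(H')^2}{(m-1)(H+1+b)}+H''+\mathsf{m}(r,\theta)\,H'\right].
\]
Since $M$ is Cartan-Hadamard and satisfies \eqref{thmexi-ricci}, the Laplacian comparison \eqref{e4} gives $\mathsf{m}(r,\theta)\le (N-1)\tfrac{\psi'}{\psi}$; as $H'\ge 0$, this and the identity above yield $H''+\mathsf{m}H'\le 1$, and combining with $(H')^2\le 2H\le 2(H+1+b)$ one gets $\Delta(f^m)\le C_m f$ on all of $M$, with $C_m=\tfrac{m(m+1)}{(m-1)^2}$ (the pole $o$ being treated directly, $M$ being Euclidean there to leading order and $H,f$ smooth). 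Choosing $A$ to solve $A'=C_mA^m$ with $A(0)=\|u_0\|_{\infty,b}$, so that $A(t)=\|u_0\|_{\infty,b}(1-t/T)^{-\frac1{m-1}}$ blows up at $T=c_1\|u_0\|_{\infty,b}^{-(m-1)}$ with $c_1:=\tfrac{m-1}{m(m+1)}$, makes $\overline u$ a classical supersolution of \eqref{e64} on $M\times[0,T)$, while $\overline u(\cdot,0)\ge|u_0|$ by the very definition of $\|\cdot\|_{\infty,b}$ in \eqref{e40z}. By the invariance of the equation under $u\mapsto -u$, $-\overline u$ is a subsolution.

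\emph{Construction, norm estimate and extension.} Then I would take $u_{0,n}$ to be bounded functions with compact support in $B_n$ satisfying $|u_{0,n}|\le|u_0|$ and $u_{0,n}\to u_0$ in $L^1_{loc}(M)$, and let $u_n$ be the solution of the homogeneous Dirichlet problem for \eqref{e64} on $B_n$ with datum $u_{0,n}$, which exists, is unique and is smooth for $t>0$ by the classical theory, see \cite{Vaz07}. Comparing on $B_n$ with $\pm\overline u$ — legitimate because $\overline u>0=u_n$ on $\partial B_n$ and $\overline u(\cdot,0)\ge|u_{0,n}|$ — gives $|u_n|\le\overline u$ on $B_n\times[0,T)$, so the family $\{u_n\}$ is uniformly bounded on compact subsets of $M\times[0,T)$. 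From here the standard interior regularity for the porous medium equation (bounds on $u_n$, control of $u_n^m$ in suitable Sobolev spaces, equicontinuity in time read off the equation) furnishes a subsequence along which $u_n\to u$ and $u_n^m\to u^m$ in $L^1_{loc}$, whence passing to the limit in \eqref{q50} written on $B_n$ shows that $u$ is a very weak solution of \eqref{e64} on $M\times[0,T)$, and the bound $|u|\le\overline u$, once rewritten, is exactly \eqref{thm-norms}. As for the extension, the crucial point is that the $u_n$, and hence $u$, do not depend on $b$: the parameter $b$ enters only the supersolution, hence only the length of the interval on which the a priori bound is known. Since $\|u_0\|_{\infty,b}$ decreases to $\underset{r(x)\to+\infty}{\operatorname{ess}\limsup}\,|u_0(x)|/[H(r(x))]^{\frac1{m-1}}=:L$ as $b\to+\infty$ by \eqref{e40-limsup-final}, running the construction for every $b$ produces the same $u$ and shows it is defined on $M\times[0,c_1\|u_0\|_{\infty,b}^{-(m-1)})$ for each $b$, hence on $M\times[0,c_1L^{-(m-1)})$; this is the extension \eqref{thm-tmax}, and when $L=0$ it is all of $M\times[0,+\infty)$, i.e.\ the solution is global.

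\emph{Main obstacle.} The delicate step is the compactness invoked above — upgrading the locally uniform $L^\infty$ bound on $\{u_n\}$ to convergence strong enough to pass to the limit in the nonlinear term $u_n^m$, for sign-changing data and on a manifold rather than on $\mathbb R^N$. By contrast, the supersolution computation, which is the conceptual core, is essentially algebraic once the two properties of $H$ are in hand, and the extension step is then a matter of bookkeeping via \eqref{e40-limsup-final}.
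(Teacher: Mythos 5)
Your supersolution is correct and is actually a shortcut past the paper's route. The paper first solves the elliptic problem $\Delta U=U^{1/m}$ on the model (Lemma \ref{l1-rad}), which requires a delicate iterative argument to get the matching lower bound $U\gtrsim (H+1)^{m/(m-1)}$, then transplants $U$ to $M$ and builds the separable barrier $(1-t/T)^{-1/(m-1)}[U+b^{m/(m-1)}]^{1/m}$. You instead take $\overline u=A(t)(H+1+b)^{1/(m-1)}$ directly and verify the supersolution inequality from the two identities $\Delta_{M_\psi}H=1$ (i.e. \eqref{accasecondo}) and $(H')^2\le 2H$, together with the Laplacian comparison \eqref{e4}; your computation checks out, gives the same $c_1=\tfrac{m-1}{m(m+1)}$, and even yields \eqref{thm-norms} with $c_2=1$ rather than $K_2/K_1$. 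For the existence theorem alone this is genuinely simpler; the paper needs the exact elliptic solution and its two-sided bound anyway for the blow-up construction of Theorem \ref{opt-blow}, which is presumably why it routes existence through Lemma \ref{l1-rad}.

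Where the two arguments diverge more seriously is the approximation scheme, and here the paper's choice is not cosmetic. The paper uses the monotone double truncation $u_{i,j,0}=(u_0^+\wedge i)-(u_0^-\wedge j)$ and Dirichlet problems on $B_n$ with the \emph{constant} boundary value $-\|u_0^-\wedge j\|_{L^\infty}$, so that all limits ($n\to\infty$, then $i\to\infty$, then $j\to\infty$) are monotone and pointwise; no compactness theory is needed, and the resulting solution is canonical. You rely on a subsequential limit extracted via "standard interior regularity", which you yourself flag as the main obstacle. That step is provable (local $L^\infty$ bounds plus energy estimates and Aubin--Lions, or DiBenedetto-type continuity, do apply to sign-changing PME solutions in local coordinates), but it is the one nontrivial ingredient you assert rather than supply. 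It also creates a secondary issue in your extension argument: a subsequential limit is not automatically "the same $u$ for every $b$", so the claim that the solution extends to time $c_1L^{-(m-1)}$ needs a single diagonal extraction over an exhaustion of $M\times[0,c_1L^{-(m-1)})$ by compact sets, using the bound $|u_n|\le\overline u_b$ on $[0,T_b)$ for each $b$ with $T_b\uparrow c_1L^{-(m-1)}$. With those two repairs — either supplying the compactness argument or switching to a monotone approximation as in the paper — your proof is complete.
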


Estimates \eqref{thm-norms} and \eqref{thm-tmax} point to possible finite-time blow-up phenomena when the initial datum has a ``critical'' growth, in the sense that the $ \limsup $ in \eqref{thm-tmax} is positive. The next result shows that, in the class of model manifolds at hand, this is indeed the case at least for specifically selected initial data.

\begin{thm}[Blow-up]\label{opt-blow}
Let $ M_\psi $ be an $N$-dimensional stochastically complete Cartan-Hadamard model manifold, and let $ H $ be as in \eqref{H-definition}. Then, for every $ T>0 $ and $ \alpha>0 $, there exists a radial positive smooth solution $U_{T,\alpha}$ to
\begin{equation}\label{rad-1-state}
\Delta \!\left( U_{T,\alpha} \right)^m =  \frac{U_{T,\alpha}}{T(m-1)} \quad \text{in } M_\psi \, ,\qquad U_\alpha(0)=\alpha \, ,
\end{equation}
which is radially increasing and satisfies the two-sided bound
\begin{equation}\label{rad-2-state}
\frac{C_1}{T^{\frac 1{m-1}}} \left[ H(r)+1 \right]^{\frac{1}{m-1}} \le U_{T,\alpha}(r) \le \frac{C_2}{T^{\frac 1{m-1}}} \left[ H(r) + 1 \right]^{\frac{1}{m-1}} \qquad \forall r \ge 0
\end{equation}
for suitable positive constants $ C_1<C_2 $ depending only on $m$ and $ \alpha$. Therefore, the separable function
\begin{equation}\label{rad-3-state}
u_{T,\alpha}(x,t) := \left( 1 - \frac t T \right)^{-\frac{1}{m-1}} U_{T,\alpha}(r(x)) \qquad \forall (x,t) \in M_\psi \times [0,T)
\end{equation}
is a solution to \eqref{e64} with $ u_0 = U_{T,\alpha} $, belongs to $ L^\infty_{loc}\!\left([0,T);X_{\infty,\psi} \right) $ and blows up pointwise everywhere at $ t=T $.

Moreover, for every measurable initial datum $ u_0 $ such that $ U_{T,\alpha} \le u_0 \le U_{T,\beta} $ for some $ \beta>\alpha>0 $, one can construct a solution $u$ to \eqref{e64} satisfying $ u_{T,\alpha} \le u \le u_{T,\beta} $ in $ M_\psi \times [0,T) $, which, in particular, also belongs to $ L^\infty_{loc}\!\left([0,T);X_{\infty,\psi} \right) $ and blows up pointwise everywhere at $ t=T $.
\end{thm}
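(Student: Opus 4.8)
The plan is to look for a solution of \eqref{e64} in separate-variables form $u(x,t) = \tau(t)\, U(r(x))$, so that the time dependence reduces to an ODE and the spatial dependence to a semilinear elliptic equation on $M_\psi$. Plugging $u = (1-t/T)^{-1/(m-1)} U(r)$ into $u_t = \Delta(u^m)$ one finds that $U$ must satisfy $\Delta(U^m) = \frac{U}{T(m-1)}$, which is precisely \eqref{rad-1-state}; the time factor is engineered so that $\tau' = \tau^m/(T(m-1))$, whose solution blowing up at $t=T$ is exactly $(1-t/T)^{-1/(m-1)}$. So the heart of the matter is the construction, for each $T>0$ and $\alpha>0$, of a radial positive smooth solution $U_{T,\alpha}$ of the elliptic problem on the model manifold, together with the two-sided growth estimate \eqref{rad-2-state}. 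By the scaling $U \mapsto T^{-1/(m-1)}\tilde U$ we may reduce to $T=1$, i.e. to $\Delta(U^m) = U/(m-1)$ with $U(0)=\alpha$.

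For that elliptic problem I would exploit the model structure: writing $v = U^m$ and using \eqref{mm43}, the radial equation becomes the ODE
\begin{equation*}
v''(r) + (N-1)\frac{\psi'(r)}{\psi(r)}\, v'(r) = \frac{1}{m-1}\, v(r)^{1/m}\,,
\end{equation*}
equivalently, in integrated form, $v'(r) = \frac{1}{(m-1)\,\psi(r)^{N-1}} \int_0^r \psi(s)^{N-1} v(s)^{1/m}\,ds$. This is an integral fixed-point problem on a growing interval: one solves it locally by Picard/monotone iteration starting from $v(0)=\alpha^m$, $v'(0)=0$, obtains a positive increasing smooth local solution, and then shows it does not blow up in finite $r$ by comparing with the a priori upper barrier below — so the solution is global in $r$. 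Positivity and radial monotonicity ($v'\ge 0$, hence $U$ radially increasing) are immediate from the integral representation since the right-hand side integrand is nonnegative.

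The key estimate \eqref{rad-2-state} is where the function $H$ enters. The natural ansatz is that $U^m$ behaves like $[H(r)+1]^{m/(m-1)}$ up to constants. To prove the lower bound I would check that $\underline v := c_1^m\,[H(r)+1]^{m/(m-1)}$ is a subsolution of the ODE for small $c_1$: using $H' = \big(\int_0^r \psi^{N-1}\big)/\psi^{N-1}$, so $(\psi^{N-1} H')' = \psi^{N-1}$, one computes $\Delta_{\mathrm{rad}}\big([H+1]^{m/(m-1)}\big)$ and checks, with the help of $[H']^2 \le 2H$ from \eqref{eq-der-H}, that the leading term dominates $\tfrac{1}{m-1}[H+1]^{1/(m-1)}$; for the upper bound one does the same with $\overline v := c_2^m[H+1]^{m/(m-1)}$ as a supersolution for $c_2$ large. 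A standard ODE comparison principle (the right-hand side $v \mapsto v^{1/m}$ is increasing) then sandwiches $v = U^m$ between $\underline v$ and $\overline v$ on all of $[0,+\infty)$, provided the barriers are ordered correctly at $r=0$, which fixes the dependence of $C_1, C_2$ on $m$ and $\alpha$ (one may need to enlarge $C_2$ and shrink $C_1$ to absorb the initial condition $v(0)=\alpha^m$). This simultaneously provides the a priori bound guaranteeing global-in-$r$ existence.

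Once $U_{T,\alpha}$ is in hand, verifying that $u_{T,\alpha}$ in \eqref{rad-3-state} is a very weak solution of \eqref{e64} in the sense of Definition \ref{defsol} is routine: it is a classical (smooth in $x$, $C^1$ in $t$) solution on $M_\psi \times [0,T)$, hence certainly distributional, one integrates by parts against $\varphi \in C^\infty_c(M_\psi\times[0,T))$ using that $\Delta$ is symmetric, and the initial term matches because $u_{T,\alpha}(\cdot,0) = U_{T,\alpha}$; membership in $L^\infty_{loc}([0,T);X_{\infty,\psi})$ and $\| u_{T,\alpha}(t)\|_{\infty,0} \le C_2 T^{-1/(m-1)}(1-t/T)^{-1/(m-1)}$ follow directly from the upper bound in \eqref{rad-2-state}, and pointwise blow-up everywhere at $t=T$ from the positive lower bound $U_{T,\alpha}\ge C_1 T^{-1/(m-1)}>0$ together with $(1-t/T)^{-1/(m-1)}\to+\infty$. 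For the last assertion, given $u_0$ with $U_{T,\alpha}\le u_0\le U_{T,\beta}$, the functions $u_{T,\alpha}$ and $u_{T,\beta}$ are a global-in-time sub- and supersolution pair with $u_{T,\alpha}\le u_{T,\beta}$ (since $U_{T,\alpha}< U_{T,\beta}$ by the comparison used to build them); a standard monotone approximation scheme — solve \eqref{e64} on an exhausting sequence of balls $B_n$ with the supersolution as lateral data, say, truncating the datum, and pass to the limit using the ordered barriers for compactness and to control the limit from above and below — produces a solution $u$ with $u_{T,\alpha}\le u\le u_{T,\beta}$ on $M_\psi\times[0,T)$, which then inherits all the stated properties from the barriers.

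The main obstacle I anticipate is the sharp verification of the barrier inequalities for \eqref{rad-2-state} on a general stochastically complete model function $\psi$: one does not have an explicit formula for $\psi$, only the structural facts $\psi''\ge 0$, $\psi'\ge 0$, $H\to+\infty$, and $[H']^2\le 2H$, so the sub/supersolution computations must be carried out purely in terms of $H$ and its derivatives (noting $(\psi^{N-1}H')'=\psi^{N-1}$ gives a clean identity for $\Delta_{\mathrm{rad}} H$), and checking that the ``error'' terms are controlled uniformly in $r$ — especially near $r=0$, where $H(r)\sim r^2/2$ and the barriers degenerate — is the delicate point. The rest is an assembly of standard ODE comparison, elliptic regularity for the smoothness of $U$, and a routine parabolic approximation argument.
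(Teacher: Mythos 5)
Your proposal is correct and its architecture coincides with the paper's: everything is funneled through the radial elliptic problem $\Delta(U^m)=\tfrac{U}{T(m-1)}$ (the paper isolates this as Lemma \ref{l1-rad}, solving $\Delta U=U^{1/m}$ and then rescaling, exactly your $T=1$ reduction in disguise), the time factor is the explicit ODE solution blowing up at $t=T$, and the sandwiched solution for general data $U_{T,\alpha}\le u_0\le U_{T,\beta}$ is produced by the same exhaustion-by-balls scheme with $u_{T,\alpha}, u_{T,\beta}$ as ordered barriers. The one genuine methodological difference is the proof of the lower bound in \eqref{rad-2-state}. The paper bootstraps: starting from $U\ge H$ it shows inductively that $U\ge C_n H^{p_n}$ with $p_{n+1}=p_n/m+1$, so that $p_n\uparrow \tfrac{m}{m-1}$ and $C_n$ converges to a positive limit; the key inputs are the identity $(\psi^{N-1}H')'=\psi^{N-1}$ and the inequality $[H']^2\le 2H$ from \eqref{eq-der-H}. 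You instead verify in one shot that $c\,[H+1]^{m/(m-1)}$ is a lower barrier for small $c$ (and $C\,[H+1]^{m/(m-1)}$ an upper one for large $C$) and invoke a first-crossing comparison for the Volterra-type integral equation with the increasing nonlinearity $v\mapsto v^{1/m}$. I checked your barrier computation: with $\Delta H=1$ and $[H']^2\le 2(H+1)$ one gets $\Delta\bigl(c[H+1]^{\frac{m}{m-1}}\bigr)\le c\,\tfrac{m(m+1)}{(m-1)^2}[H+1]^{\frac1{m-1}}$, which is below $\tfrac{c^{1/m}}{m-1}[H+1]^{\frac1{m-1}}$ for $c$ small, and the reverse inequality holds for $C$ large after dropping the nonnegative $(H')^2$ term; the comparison then closes provided the barriers are strictly ordered with the datum at $r=0$. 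So your route is, if anything, more direct than the paper's iteration, and it uses exactly the same two structural facts about $H$. One small remark: the difficulty you anticipate near $r=0$ is not actually there, since $[H(r)+1]^{m/(m-1)}\ge 1$ and the estimates above are uniform down to $r=0$; the paper's upper bound is likewise obtained by the elementary differential inequality $\bigl[U^{(m-1)/m}\bigr]'\le\tfrac{m-1}{m}H'$, essentially your supersolution in integrated form.
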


In order to prove that the solution constructed in Theorem \ref{thmexi} is unique, for our strategies to work condition \eqref{thmexi-ricci} is not enough, and we thus need to require some extra assumption. Accordingly, we present three different uniqueness results.

\begin{thm}[Uniqueness on models] \label{thm-modelli}
	Let $ M_\psi $ be an $N$-dimensional stochastically complete Cartan-Hadamard model manifold, and let $ H $ be as in \eqref{H-definition}. Assume in addition that there exist $ r_0,K>0 $ such that
	\begin{equation}\label{ipotesiH-enunciato}
	H'''(r) \le K \, \frac{H'(r)}{H(r)} \qquad \forall r \ge r_0 \, .
	\end{equation}	
	 Let $ u_0 \in X_{\infty,\psi} $. Then, for every $ T>0 $, there exists at most one solution $ u $ to \eqref{e64} such that  $ u \in L^\infty((0,T);X_{\infty,\psi}) $.
%	Consider the model manifold $M$ whose metric is given by \eqref{metric}. Assume that $M$ is stochastically complete and that there exist $ r_0,K>0 $ such that
%\begin{equation}\label{ipotesiH}
%H'''(r) \le K \, \frac{H'(r)}{H(r)} \qquad \forall r \ge r_0 \, .
%\end{equation}	
%	Consider two solutions $u,v$ to \color{red}??? \normalcolor on $M$, taking the same initial datum $u_0\ge0$, and complying with the growth bounds \color{red}???\normalcolor. Then $u\equiv v$.
\end{thm}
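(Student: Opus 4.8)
The plan is to prove uniqueness by a duality argument combined with a Gronwall-type estimate on a suitably weighted $L^1$-norm of the difference of two solutions. Suppose $u$ and $v$ are two solutions in $L^\infty((0,T);X_{\infty,\psi})$ with the same initial datum $u_0$, and set $w := u - v$. Subtracting the weak formulations \eqref{q50}, one obtains that $w$ satisfies, in the distributional sense, $w_t = \Delta\!\left( a(x,t)\, w \right)$ where $a(x,t) := \frac{u^m - v^m}{u - v} \ge 0$ is a nonnegative (possibly degenerate) measurable function, bounded on compact sets by the local boundedness of $u,v$. The idea is to test this equation against the solution $\varphi$ of a backward dual problem $\varphi_t + a\,\Delta\varphi = 0$ with a prescribed terminal datum, which formally yields $\int_M w(x,\tau)\,\varphi(x,\tau)\,d\mu = 0$ for a dense enough family of terminal data, hence $w(\cdot,\tau) = 0$. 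To make this rigorous on a noncompact manifold with data that may grow like $H^{1/(m-1)}$, I would work on geodesic balls $B_R$ with a careful cut-off procedure and then let $R \to +\infty$; the boundary and tail terms must be controlled using a well-chosen radial weight.

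The key technical device is the construction of a \emph{radial Lyapunov weight} $\Phi = \Phi(r)$, depending on $H$, such that $\Delta \Phi \le \lambda\, \Phi$ for some constant $\lambda>0$ (or more precisely such that $\Phi$ is a supersolution of the relevant weighted linear equation). The natural candidate is something like $\Phi(r) = \left[ H(r) + 1 \right]^{-p}$ or an exponential $e^{-\varepsilon H(r)}$; using the identity $\Delta f(r) = f''(r) + \mathsf{m}(r)\,f'(r)$ on the model, together with the fact that on $M_\psi$ the function $H$ satisfies $H'' = 1 - \mathsf{m}\,H'$ (so that $\Delta H = 1$, since $H$ is by construction the primitive of the volume/surface ratio — here $\mathsf{m} = (N-1)\psi'/\psi$), one computes $\Delta \Phi$ explicitly. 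This is exactly where the extra hypothesis \eqref{ipotesiH-enunciato}, namely $H'''(r) \le K\, H'(r)/H(r)$ for $r \ge r_0$, enters: it is needed to bound the ``bad'' terms arising from differentiating $\Phi$ twice, because those terms involve $H''$ and, after a further differentiation forced by the structure of $\Delta\Phi$ along the dual equation, effectively $H'''$; combined with $[H']^2 \le 2H$ from \eqref{eq-der-H}, this gives the desired differential inequality for $\Phi$ with constants independent of $R$. I would then define the weighted norm $\|w(t)\|_{L^1_\Phi} := \int_M |w(x,t)|\,\Phi(r(x))\,d\mu$, which is finite precisely because $w \in X_{\infty,\psi}$ and $\Phi$ decays fast enough to beat the growth $H^{1/(m-1)}$ together with the volume growth $\psi^{N-1}\le |\mathbb{S}^{N-1}|^{-1}\mu_{N-1}(S_r)$ controlled via \eqref{n50a}.

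The heart of the argument is then a differential inequality of the form $\frac{d}{dt}\|w(t)\|_{L^1_\Phi} \le C\,\|w(t)\|_{L^1_\Phi}$, obtained by formally multiplying $w_t = \Delta(aw)$ by $\Phi\,\mathrm{sign}(w)$, integrating by parts twice (Kato's inequality $\Delta|aw| \ge \mathrm{sign}(w)\,\Delta(aw)$ handles the degeneracy and sign issues), and using $\Delta\Phi \le \lambda\Phi$ to absorb the term $\int a\,|w|\,\Delta\Phi$ after noting $a$ is controlled in the region where it matters — in fact since $a$ itself can be bounded by $C\left[H(r)+1\right]$ via the growth of $u,v$ (because $|u^m - v^m| \le m\max(|u|,|v|)^{m-1}|u-v|$ and $|u|,|v| \lesssim H^{1/(m-1)}$), the weight $\Phi$ must be chosen so that $a\,\Delta\Phi \le C\,\Phi$, which tightens the constraint on $\Phi$ and is the real reason \eqref{ipotesiH-enunciato} is required rather than merely \eqref{eq-der-H}. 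A density/approximation argument (regularizing $\mathrm{sign}$, working on $B_R$ with Dirichlet conditions, and checking the $S_R$ boundary integrals vanish as $R\to\infty$ thanks to the decay of $\Phi$) makes this rigorous. Gronwall's lemma and $\|w(0)\|_{L^1_\Phi} = 0$ then force $w \equiv 0$ on $[0,T)$.

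The main obstacle I anticipate is precisely the construction of the weight $\Phi$ and the verification that $a\,\Delta\Phi \le C\,\Phi$ uniformly: one must balance two competing requirements — $\Phi$ decaying fast enough that the weighted norms are finite for all of $X_{\infty,\psi}$ (forcing $\Phi$ to decay at least like a negative power of $H$), yet not so fast that $\Delta\Phi$ picks up an uncontrollable factor. Resolving this balance is exactly what \eqref{ipotesiH-enunciato} is engineered to permit, and checking the boundary terms on $S_R$ vanish in the limit — which requires quantitative decay of both $\Phi$ and $\Phi'$ against $\mu_{N-1}(S_R)$ — will be the most delicate bookkeeping. The rest is a fairly standard, if lengthy, approximation-and-Gronwall scheme.
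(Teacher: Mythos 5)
Your overall scheme (a decaying radial weight satisfying a supersolution inequality, tested against the difference of the two solutions, followed by Gronwall) is indeed the strategy the paper uses for this theorem. However, there are two concrete gaps.

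First, and most seriously, neither of your candidate weights works. A weight that is a function of $H$ alone, such as $\left[H(r)+1\right]^{-p}$ or $e^{-\varepsilon H(r)}$, cannot compensate the volume element: on a model manifold $d\mu = \psi^{N-1}\,dr\otimes d\theta_{\mathbb S^{N-1}}$, and $\psi^{N-1}$ is not controlled by any function of $H$. In the critical quadratic-curvature case (which is precisely the case this theorem is designed to cover) one has $H(r)\asymp \log r$ while $\psi(r)^{N-1}\asymp e^{cr^2}$, so $\int |w|\,\Phi\,d\mu$ is infinite for every choice of $p$ or $\varepsilon$, and the boundary/cut-off terms on $S_R$ cannot vanish either. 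The paper's weight is $w(r)=H'(r)\,H(r)^{-\alpha}\,\psi(r)^{-(N-1)}$ with $\alpha>\frac{m}{m-1}+1$: the factor $\psi^{-(N-1)}$ exactly cancels the surface-area element, reducing all integrals to one-dimensional integrals of the form $\int H' H^{-\beta}\,dr$, which are then finite by stochastic completeness ($H\to+\infty$). Incidentally, this also explains why $H'''$ genuinely appears in $\Delta\Phi$ (two derivatives fall on the factor $H'$), which your explanation leaves vague; with your weights only $H''$ would appear, so hypothesis \eqref{ipotesiH-enunciato} would not be used in the way you describe. The identity $\Delta H=1$, i.e.\ $H''=1-(N-1)\frac{\psi'}{\psi}H'$, which you correctly record, is then used together with \eqref{ipotesiH-enunciato} and \eqref{eq-der-H} to show $-H''$ is bounded and to absorb all terms of $\Delta w$ into $C\,w/H$.

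Second, the step where you multiply by $\Phi\,\operatorname{sign}(w)$ and invoke Kato's inequality is not justified for very weak solutions that are merely $L^\infty_{loc}$: one cannot use $\operatorname{sign}(u-v)$ as a test function in \eqref{q50}, and regularizing the sign requires regularity of $u,v$ that is not available. The paper sidesteps this entirely by first proving (via a limit of Cauchy--Dirichlet problems on balls, using the supersolution $\overline u$ of Lemma \ref{lemm-A} as a barrier) that there exists a third solution $\tilde u$ lying \emph{below} both $u$ and $v$ on a short time interval; it then suffices to prove uniqueness for \emph{ordered} solutions $u\ge v$, for which $u^m-v^m\ge 0$ and one can test directly with the nonnegative function $\eta_R\,z$, no sign function needed. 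You would need either this ordering reduction or a full duality argument (as in the paper's proof of Theorem \ref{thm-non-modelli-fabio}) to make your formal computation rigorous.
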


The above result can be generalized to Cartan-Hadamard manifolds, complying with \eqref{thmexi-ricci}, that are \emph{not} required to be spherically symmetric (i.e.~models), provided they satisfy an additional curvature bound from below which allows for slight perturbations of models.

\begin{thm}[General uniqueness 1]\label{thm-non-modelli}
	Let $ M $ be an $ N $-dimensional Cartan-Hadamard manifold such that \eqref{thmexi-ricci} holds
	for some $ o \in M $, where $\psi$ is the model function associated with an $N$-dimensional stochastically complete Cartan-Hadamard model manifold, and let $ H $ be as in \eqref{H-definition}. Assume in addition that there exist $ r_0,K>0 $ such that \eqref{ipotesiH-enunciato} holds and
		\begin{equation}\label{sec}
	\textrm{K}_{\omega}(x)\leq -\frac{\phi''(r)}{\phi(r)} \qquad \forall x \equiv (r,\theta) \in \mathbb{R}^+ \times \mathbb{S}^{N-1} \, ,
	\end{equation}
	where $ \phi $ is another model function associated with a Cartan-Hadamard model manifold such that
		\begin{equation}\label{perturbation}
	\frac{\phi'(r)}{\phi(r)} \ge\frac{\psi'(r)}{\psi(r)}- K \, \frac{H'(r)}{H(r)} \qquad \forall r \ge r_0 \, .
	\end{equation}
	 Let $ u_0 \in X_{\infty,\psi} $. Then, for every $ T>0 $, there exists at most one solution $ u $ to \eqref{e64} such that  $ u \in L^\infty((0,T);X_{\infty,\psi}) $. 	
%	Assume that $M$ is a Cartan-Hadamard manifold satisfying:
%	\begin{equation}\label{ricci}
%	\mathrm{Ric}_o(x)\ge-(N-1)\frac{\psi''(r)}{\psi(r)}\quad \textrm{for all } x\equiv(r,\theta)\in M\setminus\{o\}
%	\end{equation}
%	where $\psi\in C^1([0,+\infty)$ is strictly positive and strictly increasing on $(0,+\infty)$ and satisfies $\psi(0)=0, \psi'(0)=1$. Assume that the Riemannian model $M_\psi$ associated to $\psi$ as in \eqref{metric} satisfies the conditions \eqref{ipotesiH}, where the function $H$ appearing there is defined in \eqref{acca}, that $M_\psi$ is stochastically complete, and that $M_\psi$ is a Cartan-Hadamard manifold. Assume in addition that
%
%	where in addition we require that, for a suitable $C>0$:
%
%	Consider two solutions $u,v$ to \color{red}??? \normalcolor on $M$, taking the same initial datum $u_0\ge0$, and complying with the growth bounds \color{red}???\normalcolor. Then $u\equiv v$.
\end{thm}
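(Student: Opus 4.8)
The plan is to run a duality (Oleinik-type) scheme, reducing matters to the spherically symmetric case of Theorem \ref{thm-modelli}. Suppose $u_1,u_2\in L^\infty((0,T);X_{\infty,\psi})$ are two solutions of \eqref{e64} sharing the same datum $u_0$, put $w:=u_1-u_2$ and $a:=(u_1^m-u_2^m)/w$ (with $a:=0$ where $w=0$). By monotonicity of $s\mapsto|s|^{m-1}s$ one has $a\ge0$, whereas the membership in $L^\infty((0,T);X_{\infty,\psi})$ gives, for a.e.\ $(x,t)\in M\times(0,T)$, the two-sided bound $0\le a(x,t)\le C\,[H(r(x))+1]$, with $C$ depending only on $m$ and on the norms of $u_1,u_2$. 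From \eqref{q50}, $w$ is a very weak solution of the linear degenerate equation $w_t=\Delta(aw)$ with null initial trace, so it suffices to prove that $\int_M w(\cdot,\tau)\,\varphi\,d\mu=0$ for a.e.\ $\tau\in(0,T)$ and every $\varphi\in C^\infty_c(M)$ with $0\le\varphi\le1$; by local boundedness of $u_i$ and parabolic regularity for the porous medium equation, $u_i^m$ is locally continuous, and a density/mollification argument makes the test functions introduced below admissible in \eqref{q50}.

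For fixed $\tau$ and $\varphi$, I would construct, on an exhausting family of balls $B_{R_n}$, the solutions $\Phi_n$ of the backward problems $\partial_t\Phi_n+a_n\,\Delta\Phi_n=0$ in $B_{R_n}\times(0,\tau)$, $\Phi_n=0$ on $\partial B_{R_n}\times(0,\tau)$, $\Phi_n(\cdot,\tau)=\varphi$, where $a_n$ is a smooth regularization of $a$ with $\varepsilon_n\le a_n\le a+\varepsilon_n$ on $B_{R_n}$ for some $\varepsilon_n\downarrow0$. The maximum principle yields $0\le\Phi_n\le1$, while testing the equation against $\Delta\Phi_n$ and against $\Phi_n$ gives the energy bounds $\int_0^\tau\!\!\int_{B_{R_n}}a_n\,|\Delta\Phi_n|^2\,d\mu\,dt\le C$ and $\sup_{[0,\tau]}\int_{B_{R_n}}|\nabla\Phi_n|^2\,d\mu\le C$, uniformly in $n$. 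Using $\Phi_n$ in the weak formulation of $w_t=\Delta(aw)$ and exploiting $\partial_t\Phi_n=-a_n\Delta\Phi_n$ leads to
\[
\int_M w(\cdot,\tau)\,\varphi\,d\mu=\int_0^\tau\!\!\int_{B_{R_n}}(a-a_n)\,w\,\Delta\Phi_n\,d\mu\,dt+\mathcal{E}_n\,,
\]
where $\mathcal{E}_n$ gathers the boundary contribution on $\partial B_{R_n}$, controlled by the Dirichlet condition, the growth of $w$ and the uniform energy bounds (choosing, if needed, a good subsequence of radii along which the relevant surface integrals are small). One then has to show that the right-hand side vanishes as $n\to\infty$: on $\{a\le n\}\cap B_{R_n}$ the integrand is small because $a-a_n$ is merely the mollification error, whereas on the complement one combines $|w|\le C\,[H+1]^{\frac1{m-1}}$, $0\le a\le C\,[H+1]$, Cauchy--Schwarz against a (suitably weighted) energy bound, and the smallness of $\Phi_n$ for $r$ large.

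To quantify all of this at spatial infinity one needs, exactly as on models, sharp control of the transplanted radial function $x\mapsto H(r(x))$ under $\Delta_M$, and this is where the extra hypotheses enter. By \eqref{e1}--\eqref{def-m} one has $\Delta_M H=H''(r)+\mathsf{m}(r,\theta)\,H'(r)$; the Ricci bound \eqref{thmexi-ricci} together with \eqref{e4} gives $\Delta_M H\le1$, whereas the sectional bound \eqref{sec} together with \eqref{e3}, the perturbation inequality \eqref{perturbation} and \eqref{eq-der-H} gives, for $r$ large, $\Delta_M H\ge1-(N-1)K\,[H'(r)]^2/H(r)\ge1-2(N-1)K$, so that $\Delta_M H$ is bounded below on all of $M$; moreover $|\nabla H|^2=[H'(r)]^2\le2H(r)$ again by \eqref{eq-der-H}. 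Hence the transplanted $H$ is ``almost harmonic'' on $M$, with two-sided bounded Laplacian and controlled gradient, just as its model counterpart, for which $\Delta H\equiv1$. Consequently the Lyapunov-type weights built from $H$ in the proof of Theorem \ref{thm-modelli} --- functions of the form $g(H(r))$, with $\Delta_M g(H)=g''(H)\,[H']^2+g'(H)\,\Delta_M H$ estimated via the bounds just recalled and via \eqref{ipotesiH-enunciato} on $H'''$ --- satisfy on $M$ the very same differential inequalities; plugging them into the estimates of the previous paragraph (to handle $\mathcal{E}_n$ and the error term) closes the argument essentially as in the radial case, so that the computations of Theorem \ref{thm-modelli} may be quoted with only notational changes.

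The main obstacle is precisely the passage to the limit $n\to\infty$ in the error term $\int_0^\tau\!\!\int(a-a_n)\,w\,\Delta\Phi_n$: it demands a delicate balancing of the linear growth of $a$, the $[H+1]^{\frac1{m-1}}$-growth of $w$, the weighted $L^2$ bound on $\Delta\Phi_n$, and the decay of $\Phi_n$ towards $\partial B_{R_n}$, reconciled by means of \eqref{eq-der-H} and \eqref{ipotesiH-enunciato}. In the non-symmetric setting the additional delicacy is that every occurrence of the identity $\Delta H\equiv1$ in the proof of Theorem \ref{thm-modelli} must be replaced by the two-sided estimate $1-2(N-1)K\le\Delta_M H\le1$, which is exactly what \eqref{thmexi-ricci} and \eqref{sec}--\eqref{perturbation} are designed to supply. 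A secondary, more technical point is the verification that $\Phi_n$, extended by $0$ outside $B_{R_n}$, is an admissible test function in \eqref{q50}, which requires a preliminary mollification of $w$ in time together with the local continuity of $u^m$.
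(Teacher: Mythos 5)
Your proposal follows the duality (Pierre/B\'enilan--Crandall--Pierre) route, which is the paper's \emph{second} strategy and is used there only for Theorem \ref{thm-non-modelli-fabio}. The paper proves Theorem \ref{thm-non-modelli} by an entirely different argument: it first reduces to \emph{ordered} solutions via Lemma \ref{lemma-ordering}, then builds an explicit radially decreasing elliptic supersolution $z$ of $\Delta z\le\kappa\,z/(H+1)$ starting from $w=H'H^{-\alpha}\psi^{-(N-1)}$ (Lemma \ref{lemma-barriera}), transplants it to $M$ using \eqref{sec}--\eqref{perturbation}, Laplacian comparison \eqref{e3} and the monotonicity $z'\le0$, and closes with a Gronwall inequality for $\int_M(u-v)\,z\,d\mu$. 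This difference is not cosmetic: your route has a genuine gap.

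The gap is the boundary term $\mathcal{E}_n$. In the duality scheme one must show that $\int_0^\tau\int_{S_{R_n}}(u_1^m-u_2^m)\,\partial_\nu\Phi_n\,d\mu_{N-1}\,dt\to0$, and the only available mechanism is to dominate $\Phi_n$ near $\partial B_{R_n}$ by an explicit supersolution $\eta$ of the backward equation $\eta_t+a_n\,\Delta\eta\le0$ that decays at least like a negative power of $\psi(r)$, so as to beat the surface growth $\mu_{N-1}(S_R)\lesssim\psi(R)^{N-1}$ and the growth of $u^m$. Any such $\eta=\lambda\,e^{-c(t)\vartheta(r)}$ with $\vartheta\gtrsim\log\psi$ has $|\nabla\vartheta|^2\gtrsim(\psi'/\psi)^2$, and the supersolution inequality then forces $a_n\lesssim\vartheta/|\nabla\vartheta|^2\asymp\mathcal{B}(r)$ --- which is precisely why Theorem \ref{thm-non-modelli-fabio} is stated in the class \eqref{e20z} rather than in $X_{\infty,\psi}$. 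Your only bound is $a\le C\,(H+1)$, and $H+1\lesssim\mathcal{B}$ \emph{fails} exactly in the critical quadratic case ($H\asymp\log r$ while $\mathcal{B}\asymp1$ there), which Theorem \ref{thm-non-modelli} is designed to cover (cf.\ Corollary \ref{thmuniq-qq}); the unweighted energy bounds $\int a_n|\Delta\Phi_n|^2\le C$, $\sup_t\int|\nabla\Phi_n|^2\le C$, or a ``good subsequence of radii'', cannot substitute for this pointwise decay, since the relevant quantities are not globally integrable against the volume growth. A secondary inaccuracy: hypotheses \eqref{sec}--\eqref{perturbation} are not there to produce a two-sided bound on $\Delta_MH$; they are used, together with $z'\le0$, to guarantee $\Delta_Mz\le\Delta_\phi z\le\kappa\,z/(H+1)$, and the paper's weight is $H'H^{-\alpha}\psi^{-(N-1)}$, not a function of $H$ alone.
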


In the critical case of a \emph{quadratic} negative curvature, as a consequence of Theorem \ref{thm-non-modelli} we obtain the following uniqueness result, which (partially) fills a gap left open in \cite[Theorem 2.7]{MP}, see also Remarks \ref{models-example1} and \ref{models-example-bis} below.

\begin{cor}\label{thmuniq-qq}
Let $ M $ be an $ N $-dimensional Cartan-Hadamard manifold such that
\begin{equation}\label{ricci-below-quadratic}
\mathrm{Ric}_o(x) \ge -(N-1)\, C_0 \left( 1+C_0 \, r^2  \right)  \qquad \forall x \equiv (r,\theta) \in  \mathbb{R}^+ \times \mathbb{S}^{N-1}
\end{equation}
and
\begin{equation}\label{ricci-above-quadratic-sect}
	\textrm{K}_{\omega}(x)\leq -C_0 \left( 1+C_0 \, r^2  \right) + \frac{K}{\log r}  \qquad \forall x \equiv (r,\theta) \in \left(r_0,+\infty \right)  \times \mathbb{S}^{N-1}
	\end{equation}
for some $ o \in M $ and $ r_0,C_0,K>0 $. Then, for every measurable initial datum $ u_0 $ such that
\begin{equation}\label{initial-log}
\left| u_0(x) \right| \le c \left[\log(r(x)+2)\right]^{\frac{1}{m-1}}  \qquad \text{for a.e. } x \in M
\end{equation}
for some $ c>0 $, there exists $ T>0 $ and a unique solution $ u $ to \eqref{e64} satisfying
\begin{equation}\label{initial-log-sol}
\left| u(x,t) \right| \le \tilde{c} \left[\log(r(x)+2)\right]^{\frac{1}{m-1}}   \qquad \text{for a.e. } (x,t) \in M \times (0,T)
\end{equation}
	for some $\tilde{c} \ge c$.
\end{cor}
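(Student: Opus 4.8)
The plan is to deduce the statement from the existence result Theorem~\ref{thmexi} and from the uniqueness result Theorem~\ref{thm-non-modelli}, applied with a carefully chosen model function $\psi$ together with an auxiliary model function $\phi$. Concretely, I would let $\psi$ be the model function solving the linear ODE
\[
\psi''(r) = C_0\left(1 + C_0\, r^2\right)\psi(r), \qquad \psi(0)=0, \quad \psi'(0)=1 .
\]
Since the coefficient is nonnegative, $\psi$ is positive, strictly increasing and convex on $(0,+\infty)$, so $M_\psi$ is a genuine Cartan-Hadamard model manifold, and by \eqref{e1ce} it satisfies $\mathrm{Ric}_o = -(N-1)\,\psi''/\psi = -(N-1)C_0(1+C_0 r^2)$. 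Hence \eqref{ricci-below-quadratic} is exactly the statement that \eqref{thmexi-ricci} holds on $M$ with this $\psi$.

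Next I would carry out the asymptotic analysis of $\psi$ and of the function $H$ in \eqref{H-definition}. Setting $w(r):=e^{C_0 r^2/2}$, one checks that $w$ solves the same ODE, and the Wronskian identity $\psi' w - \psi w' \equiv 1$ gives $\psi(r) = w(r)\int_0^r e^{-C_0 t^2}\,dt$, so $\psi(r) \sim c_\psi\, e^{C_0 r^2/2}$ with $c_\psi := \tfrac12\sqrt{\pi/C_0}$ and $\psi'/\psi = C_0 r + O(e^{-C_0 r^2})$. Plugging this into \eqref{H-definition} and evaluating the Gaussian integrals by Laplace's method yields, as $r\to+\infty$,
\[
H(r) \sim \frac{\log r}{(N-1)C_0}, \qquad H'(r) \sim \frac{1}{(N-1)C_0\, r}, \qquad \frac{H'(r)}{H(r)} \sim \frac{1}{r\log r}.
\]
In particular $H$ is unbounded, hence $M_\psi$ is stochastically complete, and $[H(r)+1]^{1/(m-1)} \asymp [\log(r+2)]^{1/(m-1)}$ for all $r\ge0$; therefore \eqref{initial-log} is equivalent to $u_0\in X_{\infty,\psi}$, and a bound of the form \eqref{initial-log-sol} (with an arbitrary constant) is equivalent to $u\in L^\infty((0,T);X_{\infty,\psi})$. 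Theorem~\ref{thmexi} then produces a solution on some interval $[0,T_\ast]$, and evaluating \eqref{thm-norms} on $(0,T_\ast/2)$ gives the bound \eqref{initial-log-sol} with $T:=T_\ast/2$ and a suitable $\tilde c \ge c$.

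For uniqueness I would invoke Theorem~\ref{thm-non-modelli} with the same $\psi$ and $H$. Condition \eqref{ipotesiH-enunciato} holds for every sufficiently large constant, since the asymptotics above give $H'''(r)\big/\bigl(H'(r)/H(r)\bigr) \sim \tfrac{2\log r}{(N-1)C_0\, r^2} \to 0$ (this is also recorded in Remark~\ref{models-example1}). It remains to construct the auxiliary model function $\phi$ witnessing \eqref{sec}. I would set $\phi(r)=r$ on $(0,r_0]$ and, for $r>r_0$, let $\phi$ be a convex model function with $\phi(r)\asymp e^{C_0 r^2/2}(\log r)^{-\kappa}$ (coinciding with this expression for $r$ large), smoothly glued across $r_0$ and enlarging $r_0$ if necessary, where $\kappa:=\tfrac{K}{2C_0}+\varepsilon$ for a fixed small $\varepsilon>0$ and $K$ is the constant appearing in \eqref{ricci-above-quadratic-sect}. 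A direct computation gives, for large $r$,
\[
\frac{\phi''(r)}{\phi(r)} = C_0\left(1+C_0 r^2\right) - \frac{2C_0\kappa}{\log r} + O\!\left(\frac{1}{r^2\log r}\right), \qquad \frac{\phi'(r)}{\phi(r)} = C_0 r - \frac{\kappa}{r\log r}.
\]
With this choice of $\kappa$ the first identity turns \eqref{ricci-above-quadratic-sect} into $\mathrm{K}_\omega(x) \le -C_0(1+C_0 r^2) + K/\log r \le -\phi''(r)/\phi(r)$ for $r>r_0$, while for $r\le r_0$ the inequality $\mathrm{K}_\omega(x)\le 0 = -\phi''(r)/\phi(r)$ holds because $M$ is Cartan-Hadamard; this is precisely \eqref{sec}. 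The second identity, together with $\psi'/\psi = C_0 r + o(1/(r\log r))$, gives $\psi'/\psi - \phi'/\phi = \kappa/(r\log r) + o(1/(r\log r)) \le \widetilde K\, H'/H$ for $r$ large, provided $\widetilde K>\kappa$. Choosing $\widetilde K:=\kappa+1$ (still compatible with \eqref{ipotesiH-enunciato}) and enlarging $r_0$ so that all the ``for $r$ large'' clauses hold simultaneously, Theorem~\ref{thm-non-modelli} applies and yields uniqueness in $L^\infty((0,T);X_{\infty,\psi})$, that is, among the solutions satisfying \eqref{initial-log-sol}.

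The main obstacle is the simultaneous verification of \eqref{sec} and \eqref{perturbation}: the logarithmically perturbed upper bound on $\mathrm{K}_\omega$ forces $\phi$ to grow \emph{strictly} slower than $\psi$, whereas \eqref{perturbation} forbids $\phi'/\phi$ from lagging behind $\psi'/\psi$ by more than a constant multiple of $H'/H\asymp 1/(r\log r)$. The reconciliation — and the reason the term $K/\log r$ in \eqref{ricci-above-quadratic-sect} has the correct order — is that multiplying a Gaussian model function by the slowly varying factor $(\log r)^{-\kappa}$ changes its logarithmic derivative by exactly $\asymp\kappa/(r\log r)$, matching the size of $H'/H$; fixing $\kappa\asymp K/C_0$ and then taking $\widetilde K$ large closes the estimate. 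The remaining ingredients — the Gaussian asymptotics for $\psi$ and $H$, and the smooth convex gluing defining $\phi$ near $r_0$ — are routine.
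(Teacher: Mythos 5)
Your proposal is correct and follows essentially the same route as the paper: realize the Ricci lower bound via a model function $\psi$ with $\psi''/\psi=C_0(1+C_0r^2)$ (the paper glues $\tfrac1c\sinh(cr)$ to $ae^{C_0r^2/2}-b$ rather than solving the ODE exactly, but this is immaterial), derive $H(r)\asymp\log r$, $H'(r)\asymp 1/r$ to get existence from Theorem~\ref{thmexi}, and then build $\phi\approx e^{C_0r^2/2}(\log r)^{-\kappa}$ with $2C_0\kappa>K$ so that \eqref{sec} and \eqref{perturbation} hold and Theorem~\ref{thm-non-modelli} gives uniqueness. The only differences are cosmetic (the paper fixes $\kappa>K/C_0$ and adds constants $A,B$ in the gluing), so nothing further is needed.
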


Before stating our last uniqueness result, we need to introduce the following radial function, which in a sense should replace $ H(r) $ for large $r$:
\[
\mathcal B(r):=
\begin{cases}
\left[\frac{\psi(r)}{\psi'(r)}\right]^2 \log\!\left[\psi(r)\right] & \text{if } r\geq 2\,,\\
1 & \text{if }  r \in [0,2) \,.
\end{cases}\]

\begin{thm}[General uniqueness 2]\label{thm-non-modelli-fabio}
Let $ M $ be an $ N $-dimensional Cartan-Hadamard manifold such that \eqref{thmexi-ricci} holds for some $ o \in M $, where $\psi$ is the model function associated with an $N$-dimensional Cartan-Hadamard model manifold satisfying
\begin{equation}\label{log}
\log \psi(r) \le l \log \psi(r-1) \qquad \forall r \ge 3
\end{equation}
for some $ l>1 $.  Given $ T>0 $, let $u, v$ be any two solutions to problem \eqref{e64} corresponding to the same initial datum $ u_0 \in L^\infty_{loc}(M) $. Suppose in addition that $ u$ and $ v $ comply with the pointwise bound
\begin{equation}\label{e20z}
\left|u(x,t)\right| \vee \left|v(x,t)\right| \leq C \left[\mathcal B(r(x))\right]^{\frac{1}{m-1}} \qquad \text{for a.e. } (x,t) \in M \times (0, T)
\end{equation}
for some $ C>0 $, where $ a \vee b :=  \max\{ a,b \} $. Then $ u =  v$ a.e.~in $M \times (0, T)$.
\end{thm}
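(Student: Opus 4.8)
The plan is to use the duality (adjoint problem) method of Bénilan--Crandall--Pierre \cite{BCP}, in the Riemannian form already employed in \cite{GMP-pures}, with cut-off functions tailored to the geometry encoded by $\mathcal B$. Set $ w := u-v $; by Definition \ref{defsol} and the convention $ u^m := |u|^{m-1}u $ one has
\begin{equation*}
\int_0^T\!\!\int_M w\left(\varphi_t + a\,\Delta\varphi\right)d\mu\,dt = 0 \qquad \text{for all } \varphi \in C^\infty_c(M\times[0,T)) \, ,
\end{equation*}
where $ a(x,t) := (u^m-v^m)/(u-v) $ on $ \{u\neq v\} $ and $ a := 0 $ elsewhere, so that $ 0 \le a \le m\left(|u|\vee|v|\right)^{m-1} \le m\,C^{m-1}\,\mathcal B(r) $ by \eqref{e20z}. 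Fix a nonnegative $ f \in C^\infty_c(M\times(0,T)) $ with $ \operatorname{supp}f \subseteq B_{\rho_0}\times(0,T) $. For $ R>\rho_0 $ and $ \varepsilon \in (0,1) $, mollify and truncate $ a $ into a smooth $ a_\varepsilon $ on $ B_R\times(0,T) $ with $ \varepsilon \le a_\varepsilon $ and $ a_\varepsilon \to a $ a.e.\ as $ \varepsilon\to0 $, and let $ \varphi = \varphi_{R,\varepsilon} $ solve the uniformly parabolic backward problem $ \varphi_t + a_\varepsilon\,\Delta\varphi = -f $ in $ B_R\times(0,T) $, $ \varphi=0 $ on $ \partial B_R\times(0,T) $, $ \varphi(\cdot,T)=0 $; reversing time this is a standard linear parabolic problem, so $ \varphi $ is smooth, $ 0\le\varphi\le\|f\|_\infty\,(T-t) $ by the maximum principle, and testing the equation against $ \Delta\varphi $ and integrating by parts (no boundary terms, since $ \varphi $ vanishes on $ \partial B_R $) gives
\begin{equation*}
\sup_{t\in(0,T)}\int_{B_R}|\nabla\varphi(t)|^2\,d\mu \;+\; \int_0^T\!\!\int_{B_R} a_\varepsilon\,|\Delta\varphi|^2\,d\mu\,dt \;\le\; C(f) \, ,
\end{equation*}
uniformly in $ R $ and $ \varepsilon $. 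Since $ f $ is supported in the fixed ball $ B_{\rho_0} $ and $ a_\varepsilon \le C'\mathcal B(r) $, a parabolic barrier argument — built through the Laplacian comparison \eqref{e4} (granted by \eqref{thmexi-ricci}) and the growth hypothesis \eqref{log} — produces a far-field bound $ \varphi(x,t) \le C''\,\psi(r(x))^{-\kappa} $ for $ r(x) $ large, together with a companion gradient bound $ |\nabla\varphi|\lesssim\psi(r)^{-\kappa} $ (up to polynomial factors), with $ \kappa $ proportional to $ (C'\,T)^{-1} $.

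\medskip

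With this at hand I would insert $ \zeta_n\,\varphi_{R,\varepsilon} $ as a test function, where $ \zeta_n $ is the radial cut-off \emph{adapted to} $ \mathcal B $: $ \zeta_n := \eta\!\left(\sqrt{\log\psi(r)}\,/\,n\right) $ for $ r\ge2 $ and $ \zeta_n\equiv1 $ for $ r\le2 $, with $ \eta $ smooth, $ \eta\equiv1 $ on $ [0,1] $ and $ \eta\equiv0 $ on $ [2,+\infty) $. The point of this choice is that, writing $ L:=\log\psi $, one has the identity $ \mathcal B\,|\nabla\sqrt L\,|^2 = \tfrac14 $, whence $ \mathcal B\,|\nabla\zeta_n|^2 = \mathcal O(n^{-2}) $; moreover, bounding the Laplacian of a radial function by $ f'' + (N-1)\tfrac{\psi'}{\psi}f' $ via \eqref{e4}, and using \eqref{log} to control $ L'' $ relative to $ (L')^2 $, one gets $ \mathcal B\,|\Delta\sqrt L\,| = \mathcal O(\sqrt L) $ and hence $ \mathcal B\,|\Delta\zeta_n| = \mathcal O(1) $ on $ \operatorname{supp}\nabla\zeta_n = \{\, n\le\sqrt{L(r)}\le 2n \,\} =: A_n $. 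Expanding $ \Delta(\zeta_n\varphi) = \zeta_n\Delta\varphi + 2\nabla\zeta_n\!\cdot\!\nabla\varphi + \varphi\,\Delta\zeta_n $ and $ \varphi_t + a\,\Delta\varphi = -f + (a-a_\varepsilon)\,\Delta\varphi $, the weak identity becomes
\begin{multline*}
\int_0^T\!\!\int_M \zeta_n\,w\,f\,d\mu\,dt = \int_0^T\!\!\int_M \zeta_n\,w\,(a-a_\varepsilon)\,\Delta\varphi\,d\mu\,dt + 2\int_0^T\!\!\int_M a\,w\,\nabla\zeta_n\!\cdot\!\nabla\varphi\,d\mu\,dt \\ + \int_0^T\!\!\int_M a\,w\,\varphi\,\Delta\zeta_n\,d\mu\,dt \, .
\end{multline*}
Letting $ \varepsilon\to0 $ kills the first term on the right (by $ a_\varepsilon\to a $, the uniform weighted bound on $ \Delta\varphi $, Cauchy--Schwarz, and $ w\in L^\infty_{\mathrm{loc}} $); then letting $ R\to+\infty $ and finally $ n\to+\infty $, and noting $ \zeta_n w f\to wf\in L^1(M\times(0,T)) $, one is left with $ \int_0^T\!\int_M w f = 0 $ for every admissible $ f $, hence $ w\equiv0 $.

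\medskip

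The crux is showing that the two cut-off error integrals, supported on $ A_n $, are infinitesimal as $ n\to+\infty $. Using \eqref{e20z} in the form $ |w|\le 2C\,\mathcal B^{1/(m-1)} $ and $ a\le m\,C^{m-1}\,\mathcal B $, the sharp cut-off bounds above, and the Bishop--Gromov volume estimate \eqref{n50a}, these terms are dominated by
\begin{equation*}
\text{(const)}\int_0^T\!\!\int_{A_n}\mathcal B(r)^{\frac{1}{m-1}}\left(|\varphi| + \mathcal B(r)^{\frac12}|\nabla\varphi|\right)\psi(r)^{N-1}\,dr\,dt \, ,
\end{equation*}
where on $ A_n $ the $ \mathcal B $-factors grow at most like a fixed power of $ \psi(r) $ (recall $ \mathcal B = \log\psi/((\log\psi)')^2 $ and $ (\log\psi)'\ge\psi^{-1} $ on a Cartan--Hadamard model, since $ \psi'\ge1 $), whereas the volume factor $ \psi(r)^{N-1}=e^{(N-1)\log\psi(r)} $ with $ \log\psi(r)\in[n^2,4n^2] $ grows exponentially in $ n $. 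Thus one needs the far-field bounds on $ \varphi $ and $ \nabla\varphi $ with exponent $ \kappa $ large enough to beat every power of $ \psi $ that appears; since $ \kappa\sim(C'T)^{-1} $, this forces $ T $ to be small. I would therefore first run the whole argument on a short interval $ [0,\delta] $, with $ \delta $ small enough (depending only on $ m,N,C $ and $ \psi $) that $ \kappa(\delta) $ is sufficiently large, obtaining $ u\equiv v $ on $ [0,\delta] $, and then remove the smallness by iterating on $ [\delta,2\delta],[2\delta,3\delta],\dots $: at each $ t=k\delta $ the two solutions coincide, on the next slab they still obey \eqref{e20z}, and $ \delta $ does not shrink along the iteration, so finitely many steps cover $ [0,T) $. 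The two points I expect to require the most care are the construction of the far-field parabolic barrier with the stated rate $ \kappa\sim(C'T)^{-1} $ (this is where the Laplacian comparison \eqref{e4} and hypothesis \eqref{log} are essential, the latter precisely so that $ \mathcal B\,|\Delta\zeta_n| $ is bounded), and the (standard but nontrivial) fact that the restriction of a very weak solution to a later time slab is again a very weak solution with the expected initial trace, so that the time-iteration is legitimate.
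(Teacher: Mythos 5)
Your overall skeleton is the right one and matches the paper's: the Bénilan--Crandall--Pierre duality scheme, a backward dual problem with coefficient $a_\varepsilon$ controlled by $\mathcal B$, a far-field barrier of the form $\exp\{-\tfrac{K}{2T-t}\log\psi(r)\}$ (which is exactly the paper's $\eta$ in \eqref{n21} and is the point where \eqref{e4}, $\psi''\ge 0$ and the bound $a_n\lesssim\mathcal B$ conspire to give a supersolution), a smallness restriction on $T$, and removal of that restriction by iterating on time slabs. However, two of the technical claims on which your error estimates rest are not justified by the hypotheses of the theorem. First, you need a pointwise interior bound $|\nabla\varphi|\lesssim\psi(r)^{-\kappa}$ on the annuli $A_n$. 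The dual equation $\varphi_t+a_\varepsilon\Delta\varphi=-f$ is not uniformly parabolic as $\varepsilon\to0$ and $a_\varepsilon$ has no modulus-of-continuity control surviving the limit, so no standard interior gradient estimate applies; your global energy bound gives only an unweighted $L^2$ control of $\nabla\varphi$, which is far from the exponentially weighted pointwise decay you invoke. The paper avoids needing any interior gradient information: it keeps the test function equal to $0$ on $\partial B_R$, so the only derivative that ever appears is the \emph{normal} derivative on $S_R$, and that is estimated by an explicit one-sided elliptic comparison in the unit annulus $B_R\setminus \overline{B}_{R-1}$ (using only \eqref{e5}), converting the pointwise smallness $\xi_n\le\eta(R-1,0)$ into smallness of $\partial\xi_n/\partial\nu$ on $S_R$; the surface term is then killed by \eqref{n50a} together with \eqref{log}.

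Second, your claim $\mathcal B\,|\Delta\zeta_n|=\mathcal O(1)$ on $A_n$ is false in general under the stated hypotheses. Writing $L=\log\psi$, the term $\tfrac{\eta'}{n}\tfrac{L''}{2\sqrt L}$ in $\zeta_n''$ contains $L''=\tfrac{\psi''}{\psi}-\bigl(\tfrac{\psi'}{\psi}\bigr)^2$, and after multiplication by $\mathcal B=L/(L')^2$ it produces a contribution of size $\asymp |\eta'|\bigl(\tfrac{\psi''\psi}{(\psi')^2}-1\bigr)$. The theorem assumes only the \emph{lower} Ricci bound \eqref{thmexi-ricci} (hence the upper bound \eqref{e4} on $\mathsf m$) and the shift condition \eqref{log}; the latter is an integrated doubling-type condition on $L$ over unit intervals and gives no pointwise control of $L''$, and no upper bound on $\psi''\psi/(\psi')^2$ is available (a convex $\psi$ with tall narrow spikes of $\psi''$ satisfies all the hypotheses while making this ratio unbounded). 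Since $w=u-v$ has no sign, you genuinely need a two-sided bound on $\Delta\zeta_n$, and the favourable sign of the bad term ($\eta'\le0$, $\psi''\ge0$) only yields the upper bound. Note that in the paper's barrier computation \eqref{f2} only the \emph{lower} bound $\Delta\vartheta\ge-(\psi'/\psi)^2$ is ever used, precisely because the barrier enters with a fixed sign; this is why the paper's route survives without any upper curvature-type assumption. To salvage your version you would either have to add a hypothesis of the form $\psi''\psi\lesssim(\psi')^2$, or first reduce to ordered solutions so that $w\ge0$ — but the ordering lemma of the paper lives in the $X_{\infty,\psi}$ class, not in the $\mathcal B^{1/(m-1)}$ class assumed here — or abandon the interior cut-off in favour of the boundary-flux estimate, which is what the paper does.
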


\begin{oss}\rm \label{rem-fb} The uniqueness class of solutions entailed by Theorem \ref{thm-non-modelli-fabio} should be compared with the existence one provided by Theorem \ref{thmexi}. We limit ourselves to observing that such classes coincide in the following significant example. Assume that, for some $C_0>0$ and $\sigma \in (0,2)$, it holds
\[
\operatorname{Ric}_o(x) \geq - C_0 \left( 1+ r(x) \right)^{2(1-\sigma)} \qquad \forall  x\in M\setminus\{o\} \, .
\]
As a consequence, it is readily seen that \eqref{thmexi-ricci} is fulfilled, for instance, by a suitable model function $\psi $ such that
\[
\psi(r)=e^{k r^{2-\sigma}}
\]
for some $k>0$ and $ r $ large enough, which clearly complies with \eqref{log}. Hence, a straightforward computation yields
\[\mathcal B (r)
= \tfrac{1}{k\left( 2-\sigma \right)^2} \, r^\sigma  \, ,
\]
still for $ r $ large enough. Therefore, Theorem \ref{thm-non-modelli-fabio} is in agreement with \cite[Theorem 2.3]{GMP-pures} ($ \gamma >-2 $ there). The fact that the optimal growth class, in this case, is $ \asymp r^{\sigma} $ can also be deduced by an alternative construction, see Remark \ref{models-example1}.
%\smallskip
%\noindent (ii) If $\psi(r)=r$, so that $ M \equiv \mathbb R^N $, we have
%\[
%\mathcal B(r)=r^2 \log r \qquad \forall r \ge 2 \, .
%\]
%In particular, Theorem \ref{thm-non-modelli-fabio} guarantees uniqueness in a class of solutions which is \emph{wider} than the one considered in \cite{BCP}. \textcolor{red}{Forse qui meglio rimpiazzare con il caso $ r^\gamma (\gamma>1)$, perch\'e nell'euclideo si sapeva gi\`a che la crescita integrale di \cite{BCP} \`e ottimale per l'esistenza.}
\end{oss}

\begin{oss}\rm
In fact it is also possible to show that, under suitable additional assumptions, if the initial datum $ u_0 $ is nonnegative and has a \emph{supercritical} growth at infinity, that is
$$
\underset{r(x)\to + \infty}{\operatorname{ess}\lim} \, \frac{u_0(x)}{\left[ H(r(x)) \right]^{\frac{1}{m-1}}} = + \infty \, ,
$$
then \emph{no nonnegative solution} to \eqref{e64} exists. The method of proof is essentially the same as in \cite[Theorem 2.5 and Corollary 2.6]{GMP-pures}, and relies on both the construction of a separable \emph{supersolution} of the form \eqref{rad-3-state} and \emph{uniqueness} results. To this end, Theorems \ref{thm-modelli}, \ref{thm-non-modelli} and Corollary \ref{thmuniq-qq} and will do, whereas Theorem \ref{thm-non-modelli-fabio} can be used provided the growth of $ \mathcal{B}(r) $ is compatible with $ H(r) $.
\end{oss}

\section{Existence and blow-up: proofs}\label{existence}

We start with a crucial result showing that there exist nontrivial radial solutions to a sublinear elliptic equation strictly related to \eqref{e64}, which satisfy an explicit two-sided estimate compatible with the class $ X_{\infty,\psi} $.

\begin{lem} \label{l1-rad}
Let $ m>1 $, and let $ M_\psi $ be an $N$-dimensional stochastically complete Cartan-Hadamard model manifold. Then there exists a radial positive smooth solution $U$ to
\begin{equation}\label{rad-1}
\Delta U =  U^{\frac 1 m} \qquad \text{in } M_\psi \, ,
\end{equation}
which is radially increasing and satisfies
\begin{equation}\label{rad-2}
C_1 \left[ H(r)+1 \right]^{\frac{m}{m-1}} \le U(r) \le C_2 \left[ H(r) + 1 \right]^{\frac{m}{m-1}} \qquad \forall r \ge 0
\end{equation}
for suitable positive constants $ C_1<C_2 $ depending only on $m$, where $ H $ is defined in \eqref{H-definition}.
\end{lem}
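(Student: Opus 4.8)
The plan is to construct $U$ as the limit of an increasing sequence of solutions to Dirichlet problems on geodesic balls, and then derive the two-sided bound \eqref{rad-2} directly from the ODE satisfied by a radial solution. Because $M_\psi$ is a model manifold, by \eqref{mm43} a radial function $U = U(r)$ solves $\Delta U = U^{1/m}$ precisely when
\[
U''(r) + (N-1)\,\frac{\psi'(r)}{\psi(r)}\,U'(r) = U(r)^{\frac 1 m} \, ,
\]
which, multiplying through by $\psi(r)^{N-1}$, can be rewritten in the divergence form
\[
\left( \psi^{N-1} U' \right)'(r) = \psi(r)^{N-1}\, U(r)^{\frac 1 m} \, .
\]
First I would set up, for each $R>0$, the problem of finding a radial $U_R$ on $B_R$ with $U_R \equiv $ (a fixed positive constant, say $1$) on $S_R$ and solving the ODE above on $(0,R)$ with $U_R'(0)=0$; existence of such a positive solution follows from standard sub/supersolution or monotone-iteration arguments for semilinear elliptic equations with a monotone nonlinearity (note $s \mapsto s^{1/m}$ is increasing, so the problem is ``cooperative''), and the regularity $C^\infty$ is bootstrapped from elliptic regularity since $\psi$ is smooth and positive on $(0,\infty)$. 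Then I would show the family $\{U_R\}$ is increasing in $R$ via the comparison principle and locally uniformly bounded (the a priori upper bound is exactly the content of the estimate I describe next), so that $U := \lim_{R\to\infty} U_R$ exists, is smooth by interior estimates, is radially increasing (each $U_R$ is, since $(\psi^{N-1}U_R')' \ge 0$ forces $U_R' \ge 0$), and solves \eqref{rad-1} on all of $M_\psi$.

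The heart of the matter is the two-sided bound \eqref{rad-2}, and here the natural guess is $U(r) \asymp [H(r)+1]^{m/(m-1)}$, motivated by the scaling: if $U \approx (H+1)^{m/(m-1)}$ then $U^{1/m} \approx (H+1)^{1/(m-1)}$, while from \eqref{H-definition} one has $H'(r) = \frac{1}{\psi(r)^{N-1}}\int_0^r \psi(t)^{N-1}\,dt$, i.e. $(\psi^{N-1} H')'(r) = \psi(r)^{N-1}$, so a short computation shows $\Delta (H+1)^{m/(m-1)}$ is comparable to $(H+1)^{1/(m-1)}$ plus a term involving $[H']^2 (H+1)^{m/(m-1)-2}$, and the latter is controlled using the crucial inequality $[H'(r)]^2 \le 2H(r)$ from \eqref{eq-der-H}. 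Concretely, I would verify that for suitable constants $0 < c_- < c_+$ the radial functions $W_\pm(r) := c_\pm [H(r)+1]^{m/(m-1)}$ are, respectively, a subsolution and a supersolution of \eqref{rad-1} on $M_\psi$; the inequality \eqref{eq-der-H} is exactly what makes the ``bad'' gradient term absorbable into the ``good'' terms for an appropriate choice of the constants. Given such barriers, the comparison principle applied on each $B_R$ (comparing $U_R$ against $W_\pm$ — one has to be slightly careful at the outer boundary $S_R$, but since $H$ is unbounded by \eqref{eq-H-unbounded} the supersolution dominates the boundary datum $1$ for $R$ large, and the subsolution can be shifted down by a constant if needed) yields $c_- [H+1]^{m/(m-1)} \le U_R \le c_+ [H+1]^{m/(m-1)}$ uniformly in $R$, and passing to the limit gives \eqref{rad-2} with $C_1 = c_-$, $C_2 = c_+$ depending only on $m$ and $N$ (the statement says only $m$, but the $N$-dependence can be absorbed or is implicit).

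The step I expect to be the main obstacle is the barrier construction, specifically checking that $W_\pm$ are genuine sub/supersolutions with constants depending only on $m$ — this requires estimating $\Delta [H+1]^{m/(m-1)}$ from above and below. Writing $G := H+1$, one computes
\[
\Delta\!\left( G^{\frac{m}{m-1}} \right) = \tfrac{m}{m-1}\, G^{\frac{1}{m-1}}\,\Delta G + \tfrac{m}{m-1}\cdot\tfrac{1}{m-1}\, G^{\frac{m}{m-1}-2}\,|\nabla G|^2 \, ,
\]
where $\Delta G = \Delta H$ and, from $(\psi^{N-1}H')' = \psi^{N-1}$ together with \eqref{mm43}, one gets $\Delta H = 1$ identically on $M_\psi$; also $|\nabla G|^2 = [H'(r)]^2 \le 2H(r) \le 2G(r)$ by \eqref{eq-der-H}. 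Hence
\[
\tfrac{m}{m-1}\, G^{\frac{1}{m-1}} \le \Delta\!\left( G^{\frac{m}{m-1}} \right) \le \tfrac{m}{m-1}\, G^{\frac{1}{m-1}} + \tfrac{2m}{(m-1)^2}\, G^{\frac{1}{m-1}} = \tfrac{m(m+1)}{(m-1)^2}\, G^{\frac{1}{m-1}} \, ,
\]
so $\Delta(G^{m/(m-1)})$ is pinned between two explicit dimensional constants times $G^{1/(m-1)} = (G^{m/(m-1)})^{1/m}$. Therefore $W_+ = c_+ G^{m/(m-1)}$ satisfies $\Delta W_+ \le \tfrac{m(m+1)}{(m-1)^2} c_+ G^{1/(m-1)}$ and $W_+^{1/m} = c_+^{1/m} G^{1/(m-1)}$, so $W_+$ is a supersolution as soon as $c_+^{1/m} \ge \tfrac{m(m+1)}{(m-1)^2} c_+$, i.e. $c_+$ small enough — wait, that forces $c_+$ small, so instead one takes $c_+$ \emph{large}: $c_+^{1-1/m} \ge \tfrac{(m-1)^2}{m(m+1)}$ is automatic for $c_+ \ge 1$, giving a supersolution; symmetrically $W_- = c_- G^{m/(m-1)}$ with $c_-^{1-1/m} \le \tfrac{m-1}{m}$, i.e. $c_-$ small, is a subsolution. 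Once these elementary algebraic inequalities are in place the comparison argument is routine, so the real care is just in the bookkeeping of constants and in the boundary comparison on $B_R$; everything else is standard elliptic theory.
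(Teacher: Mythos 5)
There are genuine gaps, both in the construction of $U$ and in the derivation of \eqref{rad-2}. First, the approximation scheme: with the boundary datum fixed equal to $1$ on $S_R$, the family $\{U_R\}$ is \emph{decreasing} in $R$, not increasing. Indeed each $U_R$ is radially nondecreasing (as you yourself note), hence for $R'>R$ one has $U_{R'}\le U_{R'}(R')=1=U_R$ on $S_R$, and comparison gives $U_{R'}\le U_R$ in $B_R$. The limit is then a solution with $0\le U_\infty\le 1$, so that $\Delta U_\infty=U_\infty^{1/m}\ge U_\infty$; on a stochastically complete manifold every bounded nonnegative function satisfying $\Delta v\ge v$ vanishes identically, so your scheme produces $U\equiv 0$, which cannot satisfy the lower bound in \eqref{rad-2}. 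The paper instead solves the radial Cauchy problem $(\psi^{N-1}U')'=\psi^{N-1}|U|^{1/m}$ with $U(0)=1$, $U'(0)=0$ (global existence by sublinearity), and reads positivity and monotonicity off the integral identity $U'(r)=\psi(r)^{-(N-1)}\int_0^r\psi(s)^{N-1}U(s)^{1/m}\,ds$.

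Second, the barrier argument. Your computation $\tfrac{m}{m-1}\,G^{\frac{1}{m-1}}\le\Delta\big(G^{\frac{m}{m-1}}\big)\le\tfrac{m(m+1)}{(m-1)^2}\,G^{\frac{1}{m-1}}$ (with $G=H+1$) is correct, but the conclusion you draw from it is reversed. Since $\Delta W_+$ scales like $c_+$ while $W_+^{1/m}$ scales like $c_+^{1/m}$, the supersolution requirement $c_+\tfrac{m(m+1)}{(m-1)^2}\le c_+^{1/m}$ forces $c_+^{1-1/m}\le\tfrac{(m-1)^2}{m(m+1)}$, i.e.\ $c_+$ \emph{small}; your ``so instead one takes $c_+$ large'' flips the inequality (a large multiple of $G^{m/(m-1)}$ is a \emph{sub}solution, not a supersolution). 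Symmetrically the subsolution needs $c_-^{1-1/m}\ge\tfrac{m-1}{m}$, and since $\tfrac{(m-1)^2}{m(m+1)}<\tfrac{m-1}{m}$, every admissible $c_+$ is strictly smaller than every admissible $c_-$: the two barriers cannot sandwich a solution. Moreover the comparison on $B_R$ fails for the lower bound, because $W_-(R)\to+\infty$ eventually exceeds any fixed boundary datum, and ``shifting the subsolution down by a constant'' means subtracting $W_-(R)-1\to+\infty$, which destroys the bound in the limit. These are not bookkeeping issues; the paper's proof is structured precisely to avoid them: the upper bound is obtained by integrating $U'(r)\le U(r)^{1/m}H'(r)$ (using the monotonicity of $U$ inside the integral), and the lower bound by an iteration $U\ge C_n H^{p_n}$ with $p_n\uparrow\tfrac{m}{m-1}$, where \eqref{eq-der-H} enters through an integration by parts. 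If you wish to retain a barrier-flavoured argument, the pointwise comparison principle must be replaced by such an ODE bootstrap.
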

\begin{proof}
First of all, let us notice that, up to the normalization $ U(0)=1 $ (which is inessential), looking for a radial solution to \eqref{rad-1} amounts to solving the Cauchy problem
\begin{equation}\label{cauchy-1}
\begin{cases}
\frac{1}{\psi^{N-1}}\left( \psi^{N-1} \, U' \right)' = |U|^{\frac 1 m} \quad \text{in } (0+\infty) \, , \\
U(0)=1 \, , \\
U'(0)=0 \, .
\end{cases}
\end{equation}
The fact that there exists a global solution is straightforward, since the right-hand side is sublinear and $ \psi(r) $ is infinitesimally Euclidean as $ r \to 0 $. Moreover, multiplying the differential equation by $ \psi^{N-1} $ and integrating between $0$ and $r>0$ we obtain
\begin{equation}\label{eq-r3}
U'(r) = \frac{1}{\psi(r)^{N-1}} \, \int_0^r \psi(s)^{N-1} \left|U(s)\right|^{\frac 1 m} \, ds \qquad \forall r>0 \, ,
\end{equation}
so that the solution is increasing (thus it stays positive and we can remove moduli). This monotonicity property, in particular, entails
$$
U'(r) \le \frac{U(r)^{\frac 1 m}}{\psi(r)^{N-1}} \, \int_0^r \psi(s)^{N-1} \, ds \quad \Leftrightarrow \quad \left[ U(r)^{\frac{m-1}{m}} \right]' \le \frac{m-1}{m} \, H'(r) \qquad \forall r>0 \, ,
$$
whence, upon a further integration,
$$
U(r)^{\frac{m-1}{m}} \le 1 + \frac{m-1}{m} \, H(r) \qquad \forall r \ge 0 \, .
$$
From such estimate the upper bound in \eqref{rad-2} readily follows.

The proof of the lower bound is more involved, and it consists of a delicate generalization of an iterative argument exploited in \cite{GMP-pures}. To this aim let us observe that, since $ U(r) \ge U(0)=1 $, still from \eqref{eq-r3} it holds
\begin{equation}\label{low-est-zero}
U'(r) \ge \frac{1}{\psi(r)^{N-1}} \, \int_0^r \psi(s)^{N-1} \, ds \quad \Rightarrow \quad U(r) \ge H(r) \qquad \forall r > 0 \, .
\end{equation}
Consider now the general inequality
\begin{equation}\label{low-est-n}
U(r) \ge C_n \, H(r)^{p_n} \qquad \forall r \ge 0 \, ,
\end{equation}
where $ n \in \mathbb{N} $ and $  C_n,p_n $  are suitable positive real numbers (note that it is satisfied with $ n=0 $ and $ C_0=p_0=1 $ due to \eqref{low-est-zero}). Assuming that \eqref{low-est-n} holds, our goal is to show that also
\begin{equation}\label{low-est-n-plus}
U(r) \ge C_{n+1} \, H(r)^{p_{n+1}} \qquad \forall r \ge 0
\end{equation}
holds, upon choosing $ C_{n+1}>0$ and  $  p_{n+1} > p_n $ in an appropriate way. Indeed, the combination of \eqref{eq-r3} and \eqref{low-est-n} yields
\begin{equation}\label{eq-r4}
U'(r) \ge  \frac{C_n^{\frac 1 m}}{\psi(r)^{N-1}} \, \int_0^r \psi(s)^{N-1} H(s)^{\frac {p_n}{m}} \, ds \qquad \forall r>0 \, ;
\end{equation}
integrating by parts the on the right hand side, we obtain
$$
\int_0^r \psi(s)^{N-1} H(s)^{\frac {p_n}{m}} \, ds = H(r)^{\frac {p_n}{m}} \, \int_0^r \psi(s)^{N-1} \, ds - \frac {p_n}{m} \int_0^r H'(s) \, H(s)^{\frac{p_n}{m}-1} \int_0^s \psi(t)^{N-1} \, dt \, ds \, .
$$
Since
$$
H'(s) \, H(s)^{\frac{p_n}{m}-1} \int_0^s \psi(t)^{N-1} \, dt = \left[ H'(s) \right]^2 H(s)^{\frac{p_n}{m}-1} \, \psi(s)^{N-1} \le  2 \, H(s)^{\frac{p_n}{m}} \, \psi(s)^{N-1}
$$
(recall \eqref{eq-der-H}), it follows that
$$
\int_0^r \psi(s)^{N-1} H(s)^{\frac {p_n}{m}} \, ds \ge  H(r)^{\frac {p_n}{m}} \, \int_0^r \psi(s)^{N-1} \, ds - 2 \, \frac {p_n}{m} \int_0^r H(s)^{\frac{p_n}{m}} \, \psi(s)^{N-1} \, ds \, ,
$$
that is
\begin{equation}\label{eq-r5}
\int_0^r \psi(s)^{N-1} H(s)^{\frac {p_n}{m}} \, ds \ge \frac{m}{m+2p_n} \,  H(r)^{\frac {p_n}{m}} \, \int_0^r \psi(s)^{N-1} \, ds \ge \frac{m-1}{m+1} \,  H(r)^{\frac {p_n}{m}} \, \int_0^r \psi(s)^{N-1} \, ds \, ,
\end{equation}
where in the last passage we used the property $ p_n \le m/(m-1) $, which trivially follows from the previously proved upper bound. Going back to \eqref{eq-r4}, and taking advantage of \eqref{eq-r5}, we end up with
\begin{equation*}\label{eq-r6}
U'(r) \ge \frac{m-1}{m+1} \, \frac{C_n^{\frac 1 m}}{\psi(r)^{N-1}} \, H(r)^{\frac {p_n}{m}} \, \int_0^r \psi(s)^{N-1} \, ds =  \frac{m-1}{m+1} \, C_n^{\frac 1 m} \, H'(r) \, H(r)^{\frac {p_n}{m}} \qquad \forall r>0 \, .
\end{equation*}
By integrating this differential inequality, and exploiting again the fact that $ p_n \le m/(m-1) $, we find that
$$
U(r) \ge 1 + \frac{m-1}{m+1} \, \frac{1}{\frac{p_n}{m}+1}  \,  C_n^{\frac 1 m} \, H(r)^{\frac{p_n}{m}+1} \ge \underbrace{\frac{(m-1)^2}{m(m+1)}}_{=:\kappa_m} \, C_n^{\frac 1 m} \, H(r)^{\frac{p_n}{m}+1} \qquad \forall r \ge 0 \, .
$$
As a result, we deduce that \eqref{low-est-n} implies \eqref{low-est-n-plus} provided $ \{ C_n \} $ and $ \{ p_n \} $ satisfy the recurrence relations
$$
C_{n+1} = \kappa_m \, C_n^{\frac 1 m} \, , \qquad p_{n+1} = \frac{p_n}{m} + 1 \, ,
$$
which are equivalent to (recall that $ C_0=p_0=1 $)
$$
C_{n} = \kappa_m^{\frac{m-m^{-n+1}}{m-1}} \, , \qquad p_n=\frac{m}{m-1}-\frac{1}{(m-1) \, m^n} \, .
$$
Hence, with such choices, by induction we infer that \eqref{low-est-n} is true for every $ n \in \mathbb{N} $, namely
$$
U(r) \ge  \kappa_m^{\frac{m-m^{-n+1}}{m-1}} \, H(r)^{\frac{m}{m-1}-\frac{1}{(m-1) \, m^n} } \qquad \forall r \ge 0 \, ,
$$
and letting $ n \to \infty $ the claimed lower bound easily follows.
\end{proof}

Under suitable curvature assumptions, we can take advantage of the previous lemma to construct (elliptic and parabolic) barriers that will be very useful for the proof of both existence and uniqueness.

\begin{cor}\label{cor-a}
Let $m>1$, and let $ M $ be an $ N $-dimensional Cartan-Hadamard manifold such that
$$
\mathrm{Ric}_o(x) \ge -(N-1) \, \frac{\psi''(r)}{\psi(r)} \qquad \forall x \equiv (r,\theta) \in \mathbb{R}^+ \times \mathbb{S}^{N-1} \, ,
$$
for some $ o \in M $, where $\psi$ is the model function associated with an $N$-dimensional stochastically complete Cartan-Hadamard model manifold. Then, for every $ b \ge  0 $, the function
$$ \overline{U}:= U+b^{\frac{m}{m-1}}  $$
satisfies
\begin{equation}\label{cor-b}
\Delta \overline{U} \le  \overline{U}^{\frac 1 m} \qquad \text{in } M \, ,
\end{equation}
where $ U $ is the same function as in Lemma \ref{l1-rad}, radially transplanted (about $o$) to $ M $. Moreover, there exist two positive constants $K_1<K_2 $, depending only on $m$, such that
\begin{equation}\label{cor-c}
K_1 \left( H+1+b \right)^{\frac 1 {m-1}} \le \overline{U}^{\frac 1 m} \le K_2 \left( H+1+b \right)^{\frac 1 {m-1}} \qquad \text{in } M \, ,
\end{equation}
where $ H $ is defined in \eqref{H-definition}.
\end{cor}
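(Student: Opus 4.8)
The plan is to radially transplant the function $U$ of Lemma \ref{l1-rad} from $M_\psi$ to $M$ and then compare $\Delta_M$ with $\Delta_{M_\psi}$ by means of the Laplacian comparison theorem. Recall that $U$ solving \eqref{rad-1} on $M_\psi$ amounts, in polar coordinates \eqref{mm43}, to the fact that its radial profile satisfies $U''(r)+(N-1)\,\tfrac{\psi'(r)}{\psi(r)}\,U'(r)=U(r)^{1/m}$ for every $r>0$, and that $U$ is radially increasing, i.e.\ $U'\ge 0$. Viewing $U$ as a radial function on $M$ about $o$, formula \eqref{e1} gives $\Delta_M U=U''(r)+\mathsf{m}(r,\theta)\,U'(r)$ on $M\setminus\{o\}$. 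Now, the hypothesis of the corollary is precisely the Ricci lower bound \eqref{e3c} relative to the present $\psi$, and since $M$ is Cartan-Hadamard the cut locus of $o$ is empty, so \eqref{e4} applies and yields $\mathsf{m}(r,\theta)\le (N-1)\,\tfrac{\psi'(r)}{\psi(r)}$. Combining this with $U'\ge 0$ I would conclude that, on $M\setminus\{o\}$,
\[
\Delta_M U\,=\,U''(r)+\mathsf{m}(r,\theta)\,U'(r)\,\le\,U''(r)+(N-1)\,\tfrac{\psi'(r)}{\psi(r)}\,U'(r)\,=\,U^{1/m}.
\]

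Next, since $b^{m/(m-1)}\ge 0$ is a constant and $t\mapsto t^{1/m}$ is nondecreasing, adding it leaves the Laplacian unchanged and only increases the right-hand side, so $\Delta_M\overline{U}=\Delta_M U\le U^{1/m}\le\overline{U}^{1/m}$ on $M\setminus\{o\}$, which is \eqref{cor-b} away from the pole. To promote \eqref{cor-b} to all of $M$, the only point needing care is the regularity at $o$: the profile $U$ extends to a smooth \emph{even} function of $r$ (it solves an ODE with a regular singular point and $U'(0)=0$), hence $U(r)=g(r^2)$ for some $g\in C^\infty$, while $x\mapsto d(x,o)^2$ is smooth near $o$ on a Cartan-Hadamard manifold; therefore $\overline{U}(r(\cdot))$ is smooth in a neighbourhood of $o$, both sides of \eqref{cor-b} are continuous there, and the inequality persists at $o$ by continuity. (Equivalently, \eqref{cor-b} may be read in the distributional sense, $o$ being a removable singularity.) This is the only mildly delicate step; everything else is elementary.

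For the two-sided estimate \eqref{cor-c}, the plan is to combine \eqref{rad-2} with the elementary inequalities
\[
a^p+c^p\,\le\,(a+c)^p\,\le\,2^{p-1}\!\left(a^p+c^p\right), \qquad a,c\ge 0,\ \ p:=\tfrac{m}{m-1}>1.
\]
Taking $a=H+1$ and $c=b$, from $C_1(H+1)^p\le U\le C_2(H+1)^p$ one gets
\[
\min\{C_1,1\}\left[(H+1)^p+b^p\right]\,\le\,\overline{U}=U+b^p\,\le\,\max\{C_2,1\}\left[(H+1)^p+b^p\right],
\]
and hence $2^{-(p-1)}\min\{C_1,1\}\,(H+1+b)^p\le\overline{U}\le\max\{C_2,1\}\,(H+1+b)^p$. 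Raising to the power $1/m$ and using $\tfrac{p}{m}=\tfrac{1}{m-1}$ and $p-1=\tfrac{1}{m-1}$ gives \eqref{cor-c} with $K_1:=\bigl(2^{-1/(m-1)}\min\{C_1,1\}\bigr)^{1/m}$ and $K_2:=\bigl(\max\{C_2,1\}\bigr)^{1/m}$, which depend only on $m$ and satisfy $K_1<1\le K_2$.
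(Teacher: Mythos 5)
Your proposal is correct and follows essentially the same route as the paper: Laplacian comparison \eqref{e4} together with $U'\ge 0$ gives $\Delta_M U\le \Delta_\psi U=U^{1/m}\le\overline{U}^{1/m}$, and the two-sided bound follows from \eqref{rad-2} plus the elementary comparison between $(a+c)^p$ and $a^p+c^p$ (the paper states the same numerical inequality with the reciprocal exponent $\tfrac{m-1}{m}$, which is equivalent to yours). Your extra care about smoothness at the pole is harmless but not needed, since Lemma \ref{l1-rad} already asserts $U$ is smooth on the model and the transplanted inequality can anyway be read distributionally.
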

\begin{proof}
By construction, the function $r \mapsto U(r)$ is increasing and satisfies \eqref{rad-1}. Therefore, thanks to Laplacian comparison (we refer in particular to \eqref{e4}), its radially-transplanted version on $M$, that we still denote by $U$, satisfies
$$
\Delta_M U = U'' + \mathsf{m}(r,\theta) \, U' \le U'' + (N-1) \frac{\psi'}{\psi} \, U' = \Delta_\psi U = U^{\frac 1 m} \qquad \text{in } M \, ,
$$
which entails
$$
\Delta_M \left( U+ b^{\frac{m}{m-1}} \right) = \Delta_M U \le  U^{\frac 1 m}  \le \left(  U +  b^{\frac{m}{m-1}} \right)^{\frac{1}{m}} \qquad \text{in } M \, ,
$$
that is \eqref{cor-b}. Finally, by combining \eqref{rad-2} and the numerical inequality
$$ 2^{-\frac 1 m} \left(A+B\right) \le  \left[ A^{\frac{m}{m-1}} + B^{\frac{m}{m-1}} \right]^{\frac{m-1}{m}} \le A+B \qquad \forall A,B \ge 0 \, , $$
we readily obtain the two-sided bound \eqref{cor-c}.
\end{proof}

% barriera parabolica per un dato iniziale fissato
\begin{lem}\label{lemm-A}
Let $ u_0 \in X_{\infty,\psi} $, and let the assumptions of Corollary \ref{cor-a} be met. Then, for every $b \ge 0$, the function
\begin{equation}\label{lemm-A-ubar}
\overline{u}(x,t) := \left( 1 - \frac{t}{T} \right)^{-\frac{1}{m-1}} \frac{\left\| u_0 \right\|_{\infty,b}}{K_1}  \left[U(r(x))+ b^{\frac{m}{m-1}} \right]^{\frac 1 m} \qquad \forall (x,t) \in M \times [0,T)
\end{equation}
is a supersolution to problem \eqref{e64} provided
\begin{equation}\label{ee33}
T = \frac{K_1^{m-1}}{(m-1) \left\| u_0 \right\|_{\infty,b}^{m-1}} \, .
\end{equation}
\end{lem}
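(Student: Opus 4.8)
The plan is to check directly that $\overline{u}$ satisfies the pointwise differential inequality $\partial_t\overline{u}\ge\Delta\!\left(\overline{u}^m\right)$ away from the pole $o$, to verify $\overline{u}(\cdot,0)\ge u_0$, and then to upgrade these facts to the distributional notion of supersolution. Set $\overline{U}:=U+b^{\frac{m}{m-1}}$ (which is precisely the function appearing in Corollary~\ref{cor-a}), $\lambda:=\|u_0\|_{\infty,b}/K_1$, and $\tau(t):=(1-t/T)^{-\frac1{m-1}}$, so that $\overline{u}=\tau(t)\,\lambda\,\overline{U}^{\frac1m}$. A direct computation gives $\tau'(t)=\tfrac1{(m-1)T}\,\tau(t)^m$, whence $\partial_t\overline{u}=\tfrac1{(m-1)T}\,\tau^m\,\lambda\,\overline{U}^{\frac1m}$. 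On the other hand $\overline{u}^m=\tau^m\,\lambda^m\,\overline{U}$ (no sign ambiguity, since $\overline{u}\ge0$), so by the elliptic inequality $\Delta\overline{U}\le\overline{U}^{\frac1m}$ from Corollary~\ref{cor-a} and the positivity of $\tau^m\lambda^m$ we get $\Delta\!\left(\overline{u}^m\right)=\tau^m\,\lambda^m\,\Delta\overline{U}\le\tau^m\,\lambda^m\,\overline{U}^{\frac1m}$ on $M\setminus\{o\}$. Subtracting,
\[
\partial_t\overline{u}-\Delta\!\left(\overline{u}^m\right)\;\ge\;\tau^m\,\overline{U}^{\frac1m}\,\lambda\left(\frac1{(m-1)T}-\lambda^{m-1}\right),
\]
which is nonnegative exactly when $T\le K_1^{m-1}/\big((m-1)\,\|u_0\|_{\infty,b}^{m-1}\big)$; with the choice \eqref{ee33} equality holds and $\overline{u}$ is a classical supersolution on $M\setminus\{o\}\times(0,T)$.

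\textbf{Initial datum and distributional upgrade.} At $t=0$ one has $\overline{u}(\cdot,0)=\lambda\,\overline{U}^{\frac1m}\ge\|u_0\|_{\infty,b}\,(H+1+b)^{\frac1{m-1}}\ge|u_0|\ge u_0$ a.e.\ on $M$, where the first inequality is the lower bound in \eqref{cor-c} and the second is the defining property of $\|\cdot\|_{\infty,b}$. To obtain \eqref{q50} with ``$\ge$'', I would fix a nonnegative $\varphi\in C^\infty_c(M\times[0,T))$ and integrate by parts in both variables on $\big(M\setminus B_\varepsilon\big)\times(0,T)$, then let $\varepsilon\to0$: the time integration by parts yields the term $\int_M\overline{u}(\cdot,0)\,\varphi(\cdot,0)\,d\mu$ (with no contribution at $t=T$, as $\varphi$ is compactly supported in time), while the spatial integration by parts produces a boundary term on $S_\varepsilon$ that vanishes in the limit, since $\overline{u}^m$ and $|\nabla(\overline{u}^m)|$ stay bounded near $o$ (recall that $U$ is smooth with $U'(0)=0$) whereas $\mu_{N-1}(S_\varepsilon)\to0$. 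One is thus left with
\[
-\int_0^T\!\!\int_M\overline{u}\,\varphi_t\,d\mu\,dt-\int_0^T\!\!\int_M\overline{u}^m\,\Delta\varphi\,d\mu\,dt-\int_M u_0\,\varphi(\cdot,0)\,d\mu=\int_0^T\!\!\int_M\big(\partial_t\overline{u}-\Delta(\overline{u}^m)\big)\varphi\,d\mu\,dt+\int_M\big(\overline{u}(\cdot,0)-u_0\big)\varphi(\cdot,0)\,d\mu\,,
\]
and the right-hand side is nonnegative by the two previous steps. Finally $\overline{u}\in L^\infty_{loc}(M\times[0,T))$ by \eqref{cor-c} and the local boundedness of $H$, so $\overline{u}$ qualifies as a supersolution in the sense of Definition~\ref{defsol}.

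\textbf{Main obstacle.} The computation is elementary; the only points deserving care are the exact bookkeeping of the time factor $\tau$, so that the admissible range of $T$ collapses precisely to \eqref{ee33}, and the treatment of the pole $o$ in the spatial integration by parts, which is however routine because the singularity of a radial function at $o$ is harmless here (bounded value and gradient, vanishing sphere area).
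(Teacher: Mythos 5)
Your proposal is correct and follows essentially the same route as the paper: the separable ansatz, the time-derivative computation $\tau'=\tfrac{1}{(m-1)T}\tau^m$, the elliptic inequality $\Delta\overline{U}\le\overline{U}^{1/m}$ from Corollary \ref{cor-a}, and the verification $\overline{u}(\cdot,0)\ge u_0$ via the lower bound in \eqref{cor-c} are exactly the steps in the paper's argument. The only difference is that you spell out the distributional upgrade (integration by parts away from the pole and the vanishing boundary term on $S_\varepsilon$), which the paper leaves implicit; this is a harmless and welcome extra level of care.
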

\begin{proof}
First of all let us observe that, in view of the definition of the norm $ \| \cdot \|_{\infty,b} $ and \eqref{cor-c}, it holds
\begin{equation}\label{ee11}
u_0(x) \le \left| u_0(x) \right| \le \left\| u_0 \right\|_{\infty,b} \left[ H(r(x))+1+b \right]^{\frac 1 {m-1}} \le \frac{\left\| u_0 \right\|_{\infty,b}}{K_1} \left[U(r(x))+ b^{\frac{m}{m-1}} \right]^{\frac 1 m}  = \overline{u}(x,0) \, ,
\end{equation}
for all $ x \in M $. Moreover, thanks to \eqref{cor-b} and \eqref{ee33}, we have
\begin{equation}\label{ee22}
\begin{aligned}
\overline{u}_t = & \left( 1 - \frac{t}{T} \right)^{-\frac{m}{m-1}} \frac{\left\| u_0 \right\|_{\infty,b}}{(m-1)\,T\,K_1}  \left(U+ b^{\frac{m}{m-1}} \right)^{\frac 1 m} \\
\ge & \left( 1 - \frac{t}{T} \right)^{-\frac{m}{m-1}} \frac{\left\| u_0 \right\|_{\infty,b}^m}{K_1^m} \, \Delta \left(U+ b^{\frac{m}{m-1}} \right) = \Delta \overline{u}^m
\end{aligned}
\end{equation}
in $  M \times (0,T) $. Hence, \eqref{ee11} and \eqref{ee22} yield the claim.
\end{proof}

We are now in position to prove our main existence result.

\begin{proof}[Proof of Theorem \ref{thmexi}]
The strategy is similar to the one employed in \cite{BCP} and \cite{GMP-pures}, so we will stress the key points only. First of all, we approximate the initial datum by means of the following double truncation:
$$
u_{i,j,0} := \left( u_0^+ \wedge i \right) - \left( u_0^- \wedge j \right)  \in L^\infty(M) \qquad \forall i,j \in \mathbb{N} \, .
$$
Then we construct, for every such data, a family of bounded solutions  $ \{ u_{i,j} \} $ to \eqref{e64} (with $ u_0 \equiv u_{i,j,0} $ and $ T=+\infty $) satisfying
\begin{equation}\label{h-dir-pre}
u_{i+1,j} \ge u_{i,j} \ge u_{i,j+1} \qquad  \forall i,j \in \mathbb{N}  \, ,  \text{ a.e. in } M \times \mathbb{R}^+ \,  .
\end{equation}
This can be performed, for instance, upon solving for every $ n \in \mathbb{N} $ the Cauchy-Dirichlet problems
\begin{equation}\label{h-dir}
\begin{cases}
\partial_t \! \left( u_{n,i,j} \right) = \Delta \! \left(u^m_{n,i,j}\right) & \textrm{in } B_n\times \mathbb{R}^+ \,, \\
u_{n,i,j} = -\left\|  u_0^- \wedge j  \right\|_{L^\infty(M)} & \textrm{on } \partial B_n \times \mathbb{R}^+ \,, \\
u_{n,i,j} = u_{i,j,0}  & \textrm{on } B_n \times \{0\}\, ,
\end{cases}
\end{equation}
and observing that, by standard comparison principles, the following inequalities hold:
\begin{equation*}\label{h-dir-1}
\left\|  u_0^+ \wedge i  \right\|_{L^\infty(M)} \ge  u_{n+1,i,j} \ge u_{n,i,j} \ge -\left\|  u_0^- \wedge j  \right\|_{L^\infty(M)} \qquad  \forall n,i,j \in \mathbb{N}  \, ,  \text{ a.e. in } B_n \times \mathbb{R}^+
\end{equation*}
and
\begin{equation}\label{h-dir-2}
u_{n,i+1,j} \ge u_{n,i,j} \ge u_{n,i,j+1} \qquad  \forall n,i,j \in \mathbb{N}  \, ,  \text{ a.e. in } B_n \times \mathbb{R}^+ \, .
\end{equation}
As a result, the sequence $ \{ u_{n,i,j} \}_{n} $ (set to $ -\left\|  u_0^- \wedge j  \right\|_{L^\infty(M)} $ outside $ B_n $) is bounded and monotone increasing. It is therefore not difficult to check that its pointwise limit $ u_{i,j} $ is also globally bounded and satisfies the very weak formulation
\begin{equation}\label{q50.ij}
-\int_0^{+\infty} \int_M u_{i,j} \, \varphi_t \,  d\mu dt  =  \int_0^{+\infty} \int_M u_{i,j}^m \, \Delta \varphi\, d\mu dt + \int_M u_{i,j,0}(x) \, \varphi(x,0) \, d\mu(x)
\end{equation}
for all $\varphi\in C^\infty_c(M\times [0, +\infty))$. Moreover, by passing to the limit in \eqref{h-dir-2} as $ n \to \infty $, we deduce \eqref{h-dir-pre}.

Thanks to Lemma \ref{lemm-A}, we know that the function $ \overline{u} $ as in \eqref{lemm-A-ubar} is a supersolution to \eqref{e64}, hence it is a fortiori a supersolution to \eqref{h-dir} for every $ n \in \mathbb{N} $, so that again by comparison we can infer that
$$
u_{n,i,j} \le \overline{u} \qquad  \forall n,i,j \in \mathbb{N}  \, ,    \text{ a.e. in } B_n \times (0,T) \, ,
$$
which entails
\begin{equation}\label{q51.ij}
u_{i,j} \le \overline{u} \qquad  \forall i,j \in \mathbb{N}  \, ,    \text{ a.e. in } M \times (0,T) \, .
\end{equation}
In fact, it is possible to establish an analogous bound from below. To this end, let us observe that $ -\overline{u} $ is a \emph{subsolution} to  \eqref{h-dir} (note that $ -\overline{u}^m = (-\overline{u})^m $ and recall \eqref{ee11}). Since $ \overline{u}(x,t) $ is increasing w.r.t.~both $ r(x) $ and $t$, and  $ \lim_{r(x) \to + \infty} \overline{u}(x,0) = + \infty $, it is plain that there exists $ \tilde{n} \in \mathbb{N} $ (depending on $j$) such that
$$
-\left\|  u_0^- \wedge j  \right\|_{L^\infty(M)} \ge -\overline{u} \qquad \text{ a.e. in } B_n \times (0,T) \, , \ \forall n \ge \tilde{n} \, .
$$
Hence, comparison between $ u_{n,i,j} $ and $ -\overline{u} $ ensures that
$$
u_{n,i,j} \ge -\overline{u} \qquad  \text{ a.e. in } B_n \times (0,T) \, , \ \forall n \ge \tilde{n} \, , \ \forall i,j \in \mathbb{N}  \, ,
$$
which entails
\begin{equation}\label{q51.ij-bis}
u_{i,j} \ge - \overline{u} \qquad  \forall i,j \in \mathbb{N}  \, ,    \text{ a.e. in } M \times (0,T) \, .
\end{equation}
Finally, we let first $ i \to \infty  $ taking advantage of the leftmost inequality in \eqref{h-dir-pre}, which guarantees that the sequence $ \{ u_{i,j} \}_i $ is increasing, and of \eqref{q51.ij}, thanks to which we can deduce that the corresponding pointwise limit $ u_j $ is locally bounded in $ [0,T) $. Passing to the limit in \eqref{q50.ij} yields
\begin{equation*}\label{q52.ij}
-\int_0^{T} \int_M u_{j} \, \varphi_t \,  d\mu dt  =  \int_0^{T} \int_M u_{j}^m \, \Delta \varphi\, d\mu dt + \int_M u_{j,0}(x) \, \varphi(x,0) \, d\mu(x)
\end{equation*}
for all $\varphi\in C^\infty_c(M\times [0, T))$, where $ u_{j,0} := u_0^+ - \left( u_0^- \wedge j \right)  $. We then let $ j \to \infty $, upon noticing that $ \{ u_j \} $ is decreasing and fulfills $ u_j \ge - \overline{u} $ a.e.~in $ M \times (0,T) $: the pointwise limit $ u:=\lim_{j \to \infty} u_j $ plainly satisfies \eqref{q50} and is therefore the sought solution.

Estimate \eqref{thm-norms} is a consequence of \eqref{q51.ij} and  \eqref{q51.ij-bis}, which entail $ |u| \le |\overline{u}| $ a.e.~in $ M \times (0,T) $, and the very definition of $ \overline{u} $ recalling \eqref{ee33} and the rightmost inequality in \eqref{cor-c}:
$$
\begin{aligned}
\left| u(x,t) \right| \le & \left( 1 - \frac{t}{T} \right)^{-\frac{1}{m-1}} \frac{\left\| u_0 \right\|_{\infty,b}}{K_1}  \left[U(r(x))+ b^{\frac{m}{m-1}} \right]^{\frac 1 m} \\
\le  & \, \frac{K_2}{K_1} \left( 1 - \frac{t}{T} \right)^{-\frac{1}{m-1}} \left\| u_0 \right\|_{\infty,b} \left[ H(r(x))+1+b \right]^{\frac 1 {m-1}} \quad \text{for a.e. } (x,t) \in M \times (0,T) \, ,
\end{aligned}
$$
so that \eqref{thm-norms} just follows from the definition of the norm $ \| \cdot \|_{\infty,b} $.

The above procedure is clearly independent of $b$, in the sense that if one takes $ \tilde b > b  $ then the supersolution $\overline{u} $ (with $b$ replaced by $ \tilde{b} $ in \eqref{lemm-A-ubar}), and therefore the corresponding constructed solution $\tilde u$, is ensured to exist up to the (larger) time
$$
\tilde{T} = \frac{K_1^{m-1}}{(m-1) \left\| u_0 \right\|_{\infty,\tilde b}^{m-1}} \, ,
$$
but $ \tilde{u} $ agrees with $ u $ up to the time $ T $ associated with $b$. Hence, letting $ b \to +\infty $ (recalling \eqref{e40-limsup-final}), we end up with a well-defined solution up to the time associated with $ b=+\infty $, that is \eqref{thm-tmax}.
\end{proof}

Still the solution to the elliptic equation constructed in Lemma \ref{l1-rad}, up to routine transformations, gives rise to separable profiles that blow up everywhere in finite time.

\begin{proof}[Proof of Theorem \ref{opt-blow}]
The existence of a radial positive smooth solution $ U_{T,\alpha} $ to \eqref{rad-1-state} is a direct consequence of Lemma \ref{l1-rad}, as the choice $ U(0)=1 $ in the Cauchy problem \eqref{cauchy-1} was made for mere convenience. Replacing such an initial condition with
$$
U(0)= \left[(m-1) \, T \right]^{\frac{m}{m-1}}\alpha^m
$$
and setting
$$
U_{T,\alpha} = \left[(m-1) \, T \right]^{-\frac{1}{m-1}} U^{\frac 1 m}
$$
readily provides the claimed solution complying with \eqref{rad-2-state}. It is then an elementary computation to check that the function $ u_{T,\alpha} $ defined in \eqref{rad-3-state} solves \eqref{e64}, which clearly by construction belongs to $ L^\infty_{loc}([0,T);X_{\infty,\psi}) $ and blows up pointwise as $ t \to T^{-} $.

The last part of the statement follows from the fact that the solutions constructed in Lemma \ref{l1-rad} are ordered with respect to the initial condition: this is a consequence of pure ODE comparison results (see the proof of \cite[Lemma 5.5]{GMP-pures} for a rigorous argument). Hence, for initial data satisfying $ U_{T,\alpha} \le u_0 \le  U_{T,\beta} $, by reproducing the proof of Theorem \ref{thmexi} using $ u_{T,\alpha} $ and $  u_{T,\beta} $ as barriers in place of $ -\overline{u} $ and $ \overline{u} $, respectively, we readily obtain a solution $ u $ to \eqref{e64} such that $ u_{T,\alpha} \le u \le u_{T,\beta} $ in $ M_\psi \times [0,T) $, so that $ u $ is forced to blow up at the same time $ t=T $.
\end{proof}

\section{Uniqueness: proofs}\label{uniqueness}

Since the method of proof of Theorems \ref{thm-modelli} and \ref{thm-non-modelli} is essentially different from the one of Theorem \ref{thm-non-modelli-fabio}, we split this section into two independent subsections, in which we address the two strategies separately.

\subsection{First strategy}

Before proving Theorems \ref{thm-modelli} and \ref{thm-non-modelli}, we need a few preliminary technical results, which will be key to the whole strategy. The latter relies on providing a positive function $ z $ satisfying the \emph{supersolution} inequality
	\begin{equation*}\label{barrier-pre}
\Delta z \le \kappa \, \frac{z}{H+1} \qquad \text{in } M \, ,
\end{equation*}
for some $  \kappa>0 $, and enjoying suitable integrability properties. Such a function will then be tested in the very weak formulation satisfied by two \emph{ordered} solutions to the same problem \eqref{e64}, yielding an integral inequality that admits the constant $ 0 $ as its unique solution.

\begin{lem}\label{h-terza-h-seconda}
	Let $ \psi $ be the model function associated with an $ N $-dimensional Cartan-Hada\-mard model ma\-nifold, and let $ H $ be as in \eqref{H-definition}. Assume in addition that there exist $ r_0,K>0 $ such that
	\begin{equation}\label{ipotesiH-lemma}
	H'''(r) \le K \, \frac{H'(r)}{H(r)} \qquad \forall r \ge r_0 \, .
	\end{equation}	
	Then there exists $ \widehat{K}>0 $, depending only on $K$, such that
	\begin{equation}\label{ipotesiH-secondo}
	-H''(r) \le \widehat{K} \qquad \forall r \ge r_0 \, .
	\end{equation}	
	\end{lem}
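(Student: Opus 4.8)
The plan is to show that hypothesis \eqref{ipotesiH-lemma}, once combined with the universal inequality $[H'(r)]^2\le 2H(r)$ recorded in \eqref{eq-der-H}, already forces $H'''$ to be bounded above by a \emph{constant} on $[r_0,+\infty)$; a double integration exploiting only the sign condition $H'\ge 0$ will then close the argument. Notice first the elementary facts that will be used throughout: since $\psi$ is a smooth model function, $H$ is smooth on $(0,+\infty)$; moreover $H'(r)=\big(\int_0^r\psi(t)^{N-1}\,dt\big)/\psi(r)^{N-1}\ge 0$, so that $H$ is nondecreasing and $H(r)\ge H(r_0)>0$ for all $r\ge r_0$. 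In particular, by \eqref{eq-der-H} we have $H'(r)\le\sqrt{2H(r)}$, whence
\[
\frac{H'(r)}{H(r)}\le\sqrt{\frac{2}{H(r)}}\le\sqrt{\frac{2}{H(r_0)}}\qquad\forall r\ge r_0\,.
\]

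Now I would fix an arbitrary $\rho\ge r_0$ and bound $-H''(\rho)$ in a way independent of $\rho$. If $H''(\rho)\ge 0$ there is nothing to prove, so assume $c:=-H''(\rho)>0$. For every $r\ge\rho$, using $H(r)\ge H(\rho)$ and \eqref{ipotesiH-lemma},
\[
H'''(r)\le K\,\frac{H'(r)}{H(r)}\le K\sqrt{\frac{2}{H(r)}}\le K\sqrt{\frac{2}{H(\rho)}}=:L_\rho\,.
\]
Integrating from $\rho$ to $r$ gives $H''(r)\le -c+L_\rho(r-\rho)$, hence $H''(r)\le -c/2$ for all $r\in\big[\rho,\,\rho+\tfrac{c}{2L_\rho}\big]$. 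A further integration over that interval, together with $H'\ge 0$, yields
\[
0\le H'\!\Big(\rho+\tfrac{c}{2L_\rho}\Big)\le H'(\rho)-\frac{c}{2}\cdot\frac{c}{2L_\rho}=H'(\rho)-\frac{c^2}{4L_\rho}\,,
\]
so that $\dfrac{c^2}{4L_\rho}\le H'(\rho)\le\sqrt{2H(\rho)}$. Substituting $L_\rho=K\sqrt{2/H(\rho)}$ and cancelling the common factor $\sqrt{H(\rho)}>0$, this becomes $c^2\le 8K$, i.e. $-H''(\rho)\le 2\sqrt{2K}$. Since $\rho\ge r_0$ was arbitrary, \eqref{ipotesiH-secondo} holds with $\widehat K:=2\sqrt{2K}$, which depends only on $K$ as required.

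I do not anticipate a serious obstacle: the one genuinely structural input is \eqref{eq-der-H}, and everything else is a soft ODE argument. The point worth emphasizing is the direction of the estimates: \eqref{ipotesiH-lemma} controls $H'''$ only \emph{from above}, which is precisely what is needed — if $-H''$ were too large at some $\rho$, then $H''$ would stay very negative on a whole interval (because $H'''$ cannot increase it fast enough), forcing $H'$ to become negative just after $\rho$ and contradicting $H'\ge0$; quantifying "just after" via $\sqrt{2/H(\rho)}$ and balancing against $H'(\rho)\le\sqrt{2H(\rho)}$ is what makes the resulting bound independent of $\rho$ (indeed of $r_0$, $N$ and $\psi$).
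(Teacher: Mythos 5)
Your argument is correct, and it rests on exactly the same two structural inputs as the paper's proof — the inequality $[H']^2\le 2H$ from \eqref{eq-der-H} and the positivity of $H'$ — but the quantitative implementation is different. The paper integrates \eqref{ipotesiH-lemma} twice, keeping the $H'/H$ structure intact so that the double integral becomes $K(s-r)\log\bigl(H(s)/H(r)\bigr)$, and then chooses the window $s=r+\sqrt{H(r)}$, controlling the logarithmic ratio by integrating \eqref{eq-der-H}; this yields $-H''(r)\le\sqrt2+2K\log\bigl(\tfrac{1+\sqrt2}{\sqrt2}\bigr)$. You instead flatten the hypothesis at the first step, bounding $H'''$ pointwise by the constant $L_\rho=K\sqrt{2/H(\rho)}$ on $[\rho,+\infty)$, and then let the size of the putative defect $c=-H''(\rho)$ itself dictate the window length $c/(2L_\rho)$ on which $H''\le -c/2$; balancing against $H'(\rho)\le\sqrt{2H(\rho)}$ makes the $H(\rho)$ factors cancel and gives the clean bound $-H''(\rho)\le 2\sqrt{2K}$. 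Both windows are of order $\sqrt{H(\rho)}$, so the mechanism is the same; your version is slightly more self-contained (no logarithms, and the constant is explicit and scales like $\sqrt K$ rather than $K$), while the paper's version avoids the case split on the sign of $H''(\rho)$ by keeping everything as a single chain of inequalities. All the side conditions you invoke ($H(r_0)>0$ since $r_0>0$, monotonicity of $H$, validity of \eqref{eq-der-H} because $\psi$ is convex hence nondecreasing for a Cartan-Hadamard model) are indeed available under the stated hypotheses, so there is no gap.
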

\begin{proof}
	For every $ r \ge r_0 $ and $ s>r $, we have:
	$$
	\begin{aligned}
	H'(s) = H'(r) + \int_{r}^{s} H''(t) \, dt = &  \, H'(r) + \int_{r}^{s} \left[ H''(r) + \int_r^t H'''(\tau) \, d\tau  \right] dt \\
	\le & \, H'(r) + (s-r) H''(r) + {K} \int_r^s  \int_r^t  \frac{H'(\tau)}{H(\tau)}  \, d\tau \, dt \\
	= &  \, H'(r) + (s-r) H''(r) + {K} \int_r^s  \int_r^t  \left[ \log H(\tau)  \right]' d\tau \, dt \\
	= &  \, H'(r) + (s-r) H''(r) + {K} \int_r^s   \log\!\left( \frac{H(t)}{H(r)} \right) dt  \\
  	\le & \, H'(r) + (s-r) H''(r) + {K} \, (s-r)  \log\!\left( \frac{H(s)}{H(r)} \right) ,
	\end{aligned}
	$$
	where in the last passage we have exploited the monotonicity of $  H $. Since $ H'(s)>0 $, it follows that
	\begin{equation}\label{eq-H-second-last}
	-H''(r) < \frac{H'(r)}{s-r} + {K}  \log\!\left( \frac{H(s)}{H(r)} \right)  .
	\end{equation}
	On the other hand, by integrating inequality \eqref{eq-der-H} between $ r $ and $s$, we obtain:
	$$
  \sqrt{H(s)} - \sqrt{H(r)}	\le \frac{s-r}{\sqrt{2}} \, ;
	$$
	in particular, upon choosing $ s=r+\sqrt{H(r)} $, we infer that
	$$
	 \sqrt{H\!\left(r+\sqrt{H(r)}\right)} \le \frac{1+\sqrt 2}{\sqrt 2} \, \sqrt{H(r)}	\, .
	$$
	Hence, by plugging such an estimate in \eqref{eq-H-second-last} with $ s = r+\sqrt{H(r)}  $, and using again \eqref{eq-der-H}, we end up with
	$$
	-H''(r) < \frac{H'(r)}{\sqrt{H(r)}} + {K}  \log\!\left( \frac{H\!\left( r+\sqrt{H(r)} \right)}{H(r)} \right)  \le \sqrt 2 + 2K \log\! \left( \frac{1+\sqrt 2}{\sqrt 2} \right) ,
	$$
	that is \eqref{ipotesiH-secondo}.
	\end{proof}

\normalcolor

For a given model function $ \psi $ and a parameter $\alpha>0$ to be fixed later, let us now define
\begin{equation}\label{barrier}
w(r):=\frac{H'(r)}{H(r)^\alpha} \, \psi(r)^{-(N-1)} \qquad \forall r>0 \, ,
\end{equation}
where $H$ is as in \eqref{H-definition}. Such a function will take a central role in the construction of the above barrier $z$.

\begin{lem}\label{lemma-barriera}
	Let $ M_\psi $ be an $N$-dimensional stochastically complete Cartan-Hadamard model manifold, and let $ H $ be as in \eqref{H-definition}. Assume in addition that   \eqref{ipotesiH-lemma} holds for some $ r_0,K>0 $. Then, for every $ \alpha>0 $, there exists a positive smooth function $z$ such that
	\begin{equation}\label{barrier2}
\Delta z \le \kappa \, \frac{z}{H+1} \qquad \text{in } M_\psi  \, ,
	\end{equation}
	where $ \kappa>0 $ is a suitable constant depending only on $ \alpha , H(r_0) , K $. Moreover, $z$ coincides with the function $w$ given in \eqref{barrier} outside $B_{r_0}(o)$.	
\end{lem}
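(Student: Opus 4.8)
The plan is to check directly that the radial function $w$ of \eqref{barrier} is a supersolution of the required form for $r\ge r_0$, and then to glue it with a suitable positive function on $B_{r_0}(o)$. First I would compute $\Delta w$ on the model $M_\psi$ via the polar form \eqref{mm43}, i.e.\ $\Delta w=\psi^{-(N-1)}(\psi^{N-1}w')'$, together with the identity $\Delta H=1$ on $M_\psi$ — equivalently $(N-1)\tfrac{\psi'}{\psi}H'=1-H''$ — which is immediate from \eqref{H-definition} and \eqref{mm43}. Since $\psi^{N-1}w=H'H^{-\alpha}$, a direct computation gives first $\psi^{N-1}w'=(2H''-1)H^{-\alpha}-\alpha(H')^{2}H^{-\alpha-1}$, and then, differentiating once more and dividing by $\psi^{N-1}$,
\begin{equation*}
\Delta w=\frac{H^{-\alpha}}{\psi^{N-1}}\left[\,2H'''-\alpha(4H''-1)\,\frac{H'}{H}+\alpha(\alpha+1)\,\frac{(H')^{3}}{H^{2}}\,\right].
\end{equation*}
As $w=H'H^{-\alpha}\psi^{-(N-1)}$, the inequality \eqref{barrier2} for $z=w$ is thus equivalent to
\begin{equation*}
2H'''-\alpha(4H''-1)\,\frac{H'}{H}+\alpha(\alpha+1)\,\frac{(H')^{3}}{H^{2}}\le\kappa\,\frac{H'}{H+1}.
\end{equation*}

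For $r\ge r_0$ I would bound each of the three terms on the left by a multiple of $\tfrac{H'}{H}$: the first by $2K\tfrac{H'}{H}$ from \eqref{ipotesiH-lemma}; the second by $\alpha(1+4\widehat{K})\tfrac{H'}{H}$, using the lower bound $H''\ge-\widehat{K}$ furnished by Lemma \ref{h-terza-h-seconda} (when $1-4H''<0$ the term is negative and there is nothing to bound); the third by $2\alpha(\alpha+1)\tfrac{H'}{H}$, since $(H')^{3}H^{-2}=\tfrac{(H')^{2}}{H}\cdot\tfrac{H'}{H}\le 2\tfrac{H'}{H}$ by \eqref{eq-der-H}. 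Because $H$ is nondecreasing and $H(r_0)>0$, for $r\ge r_0$ one has $\tfrac{H'}{H}\le\bigl(1+\tfrac{1}{H(r_0)}\bigr)\tfrac{H'}{H+1}$, and collecting the three estimates gives \eqref{barrier2} for $z=w$ on $M_\psi\setminus B_{r_0}(o)$ with $\kappa:=\bigl(2K+\alpha(1+4\widehat{K})+2\alpha(\alpha+1)\bigr)\bigl(1+\tfrac{1}{H(r_0)}\bigr)$, a constant depending only on $\alpha$, $K$ and $H(r_0)$ (recall that $\widehat{K}$ depends only on $K$).

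It remains to define $z$ on $B_{r_0}(o)$. The function $w$ is smooth and positive on $(0,+\infty)$ but blows up at the pole (an elementary expansion gives $w(r)\asymp r^{\,2-2\alpha-N}$ as $r\to0^{+}$), so a modification near $o$ is unavoidable. I would extend $w$ to $[0,r_0]$ by a positive constant not exceeding $w(r_0)$ on a neighborhood of the origin, joined to $w$ near $r_0$; the point that makes this compatible with the supersolution inequality is that $w$ is strictly decreasing — a short computation gives $(\log w)'=\tfrac{2H''-1}{H'}-\alpha\tfrac{H'}{H}<0$, because on a Cartan-Hadamard model $(N-1)\tfrac{\psi'}{\psi}H'\ge\tfrac{N-1}{N}$ and hence $H''\le\tfrac1N\le\tfrac12$. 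Since $w'(r_0)<0$, the join can be arranged so that $\Delta z\le 0$ on all of $B_{r_0}(o)$ (kept merely Lipschitz at $r=r_0$ the corner contributes a nonpositive singular term, which is all that is needed for the subsequent application; with more care the join can be made smooth). Then $\Delta z\le 0\le\kappa\tfrac{z}{H+1}$ on $B_{r_0}(o)$, so the global bound \eqref{barrier2} holds with the $\kappa$ found above, and by construction $z$ is positive and coincides with $w$ outside $B_{r_0}(o)$.

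The main obstacle is the estimate for $r\ge r_0$: the hypothesis \eqref{ipotesiH-lemma} controls only $H'''$, whereas $\Delta w$ also involves $H''$ through the middle term $-\alpha(4H''-1)\tfrac{H'}{H}$, so one must first upgrade \eqref{ipotesiH-lemma} to the bound $-H''\le\widehat{K}$ — exactly the content of Lemma \ref{h-terza-h-seconda} — and then fit all three terms of $\Delta w$ under $\kappa\tfrac{H'}{H+1}$ with $\kappa$ depending only on $\alpha$, $K$, $H(r_0)$. The interior gluing, by contrast, is routine.
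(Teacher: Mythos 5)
Your proof is correct and takes essentially the same route as the paper's: the same explicit computation of $\Delta w$ (organized via the divergence form $\psi^{-(N-1)}\bigl(\psi^{N-1}w'\bigr)'$ rather than via $w''+(N-1)\tfrac{\psi'}{\psi}w'$, yielding the identical expression), the same term-by-term bounds using Lemma \ref{h-terza-h-seconda} and \eqref{eq-der-H}, and the same monotonicity argument ($2H''-1\le 0$, hence $w'<0$) to justify the constant extension and gluing at $r=r_0$.
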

\begin{proof}
	We start our proof by computing explicitly the Laplacian of $w$. To this end, a straightforward computation yields
%	notice in first place that, by \eqref{acca}, one has:
%	\begin{equation}\label{accaprimo}
%	H'(r)=\frac{\int_0^r\psi^{N-1}(\xi)\,{\rm d}\xi}{\psi^{N-1}(r)}
%	\end{equation}
%	and hence
	\begin{equation}\label{accasecondo}
	% \begin{aligned}
	H''(r)	= 1-(N-1) \, \frac{\psi'(r)}{\psi(r)} \, H'(r) \qquad \forall r>0 \, ,
	% 1-(N-1)\frac{\psi'(r)}{\psi^N(r)}\int_0^r\psi^{N-1}(\xi)\,{\rm d}\xi=1-(N-1)\frac{\psi'(r)}{\psi(r)}\frac{\int_0^r\psi^{N-1}(\xi)\,{\rm d}\xi}{\psi^{N-1}(r)}\\
	% \end{aligned}
	\end{equation}
	namely the identity $ \Delta H = 1 $.
% Hence, taking a further derivative,
%	\begin{equation}\label{accaterzo}
%	H'''(r) = -(N-1) \left[\frac{\psi'(r)}{\psi(r)} \, H'(r)\right]' \qquad \forall r>0 \, .
%	\end{equation}
	From here on, for the sake of readability, we will drop the explicit dependence on the variable $ r>0 $. By the definition of $w$, it follows that
	\begin{equation}\label{wprimo}
	\begin{aligned}
	w'=& \, H'' H^{-\alpha} \, \psi^{-(N-1)} -\alpha \, H^{-\alpha-1} \left(H'\right)^2\psi^{-(N-1)}-(N-1) \, \psi^{-N}  \psi'  H' H^{-\alpha}\\
	=& \, H^{-\alpha} \, \psi^{-(N-1)}\left[H''-\alpha \, \frac{\left(H'\right)^2}{H}-(N-1) \, \frac{\psi'}{\psi} \, H'\right]\\
	=& \, H^{-\alpha} \, \psi^{-(N-1)} \left[2H''-1- \alpha \, \frac{\left(H'\right)^2}{H}\right],
	\end{aligned}
	\end{equation}
	where in the last step we have used \eqref{accasecondo}. Besides, by taking the second derivative, we obtain:
	\[
	\begin{aligned}
	w''=&\left[-\alpha \, H^{-\alpha-1} H' \, \psi^{-(N-1)}-(N-1) \, \psi^{-N}\psi' H^{-\alpha}\right]\left[2H''-1- \alpha \, \frac{\left(H'\right)^2}{H}\right]\\
	& \, +H^{-\alpha}\,\psi^{-(N-1)}\left[ 2H'''-\alpha \, \frac{2H'H''H-\left(H'\right)^3}{H^2} \right]\\
	=& \, H^{-\alpha} \, \psi^{-(N-1)} \left[-\alpha \frac{H'}{H}-(N-1)\frac{\psi'}{\psi}\right]\left[2H''-1-\alpha \, \frac{\left(H'\right)^2}{H}\right]\\
	& \, +H^{-\alpha}\,\psi^{-(N-1)}\left[ 2H'''-\alpha \, \frac{2H'H''H-\left(H'\right)^3}{H^2} \right] . \\
	\end{aligned}
	\]
	We can now compute the Laplacian of $w$, recalling \eqref{mm43} and exploiting the above expressions for $w' , w''$. By elementary algebraic calculations, we have:
	\begin{equation}\label{wq-lap-w}
	\begin{aligned}
	\Delta w = & \, w''+(N-1) \, \frac{\psi'}{\psi} \, w'\\
	=& \, H^{-\alpha} \, \psi^{-(N-1)}\left[-\alpha \frac{H'}{H}-(N-1)\frac{\psi'}{\psi}\right]\left[2H''-1-\alpha \,\frac{\left(H'\right)^2}{H}\right]\\
	& \, +H^{-\alpha} \, \psi^{-(N-1)}\left[2H'''-2\alpha \, \frac{H'H''}{H}+\alpha \, \frac{\left(H'\right)^3}{H^2}\right]\\
	& \, +(N-1) \, \frac{\psi'}{\psi} \, H^{-\alpha} \, \psi^{-(N-1)}\left[2H''-1-\alpha \, \frac{\left(H'\right)^2}{H}\right]\\
	=& \, H^{-\alpha} \, \psi^{-(N-1)}\left[-4\alpha \, \frac{H'H''}{H}+\alpha\frac{H'}{H}+\alpha(\alpha+1) \, \frac{\left(H'\right)^3}{H^2}+2H'''\right].
	\end{aligned}
	\end{equation}
	We now require that the inequality
	\begin{equation*}\label{large r}
	\Delta w\le C \, \frac wH \qquad \text{in } M_{\psi} \setminus B_{r_0}(o)
	\end{equation*}
	be satisfied for a suitable constant $ C >0$ sufficiently large. In view of \eqref{wq-lap-w}, this amounts to
	\[
	H^{-\alpha} \, \psi^{-(N-1)}\left[-4\alpha \, \frac{H'H''}{H}+\alpha\frac{H'}{H}+\alpha(\alpha+1)\,\frac{\left(H'\right)^3}{H^2}+2H'''\right]\le C \, \frac wH=C \, H^{-\alpha} \, \psi^{-(N-1)}\frac{H'}{H} \, ,
	\]
	namely
	\begin{equation}\label{ineq}
	-4\alpha \, \frac{H'H''}{H}+\alpha\frac{H'}{H}+\alpha(\alpha+1) \, \frac{\left(H'\right)^3}{H^2}+2H'''\le C \, \frac{H'}{H} \, ,
	\end{equation}
	for all $ r \ge r_0 $. Thanks to \eqref{eq-der-H} and Lemma \ref{h-terza-h-seconda}, it is readily seen that \eqref{ineq} holds provided
	$$
	C \ge 2K + 2\alpha(\alpha+1) + \alpha+4\alpha\widehat{K} \, .
	$$
	In order to extend the barrier inside $ B_{r_0}(o) $, it is useful to observe that $ w $ is decreasing. Indeed, for all $ r>0 $, an integration by parts yields (recall that $ \psi'' \ge 0 $)
		\begin{equation*}\label{ineq-ee}
	\begin{aligned}
	(N-1)\frac{\psi'(r)}{\psi(r)}H'(r) = & \,  (N-1) \frac{\psi'(r)\int_0^r \psi(s)^{N-1} \, ds }{\psi(r)^{N}} \\
	 = & \,  \frac{N-1}{N} + \frac{(N-1) \, \psi'(r)\int_0^r \frac{\psi(s)^{N} \psi''(s) }{\left[\psi'(s)\right]^2} \, ds}{N \, \psi(r)^{N}} \ge \frac 1 2 \, ,
	\end{aligned}
	\end{equation*}
	which, in view of \eqref{accasecondo} and \eqref{wprimo}, entails  the claimed monotonicity property. Hence, because $ w'(r_0)<0 $, we can finally observe that the function
	$$
	z(x):= \begin{cases}
	w(r(x)) & \text{in }  M_{\psi} \setminus B_{r_0}(o) \\
	w(r_0) & \text{in } B_{r_0} (o)
	\end{cases}
	$$
   satisfies (weakly) \eqref{barrier2} with $ \kappa = C +C/H(r_0) $. In order to make it a smooth supersolution, it is enough to apply a standard (radial) regularization procedure in a neighborhood of $ r=r_0 $ and replace $ \kappa $ with, for instance, $ 2 \kappa $.
%	To manage the above equation, notice that it has been shown in formula (4.10) of \cite{GMP} that $\left(H'\right)^2\le 2H$. In fact, this is an immediate consequence of the fact that $\psi$ is positive and increasing and of the formula, obtained by an integration by parts:
%	\[
%	H(r)=\frac{\left[H'(r)\right]^2}2+(n-1)\int_0^r\frac{\psi'(s)}{\psi(s)}\left[H'(s)\right]^2,{\rm d}s.
%	\]
%	Therefore, \[
%	\frac{\left(H'\right)^3}{H^2}\le 2\frac{H'}{H}\]
%	and, as a consequence, \eqref{ineq} holds if
%	\[
%	2H'''-4\alpha\frac{H'H''}{H}\le C\frac{H'}{H}
%	\]
%	provided $C=C(\alpha)$ is sufficiently large. A sufficient condition for this to hold is that
%	\[
%	H''\ge-K,\qquad H'''\le K\frac{H'}H
%	\]
%	for a suitable $K>0$. Therefore, under the conditions \eqref{ipotesiH}, the function $w$ given in \eqref{barrier} satisfies \eqref{large r} for any $r>0$ provided $C=C(\alpha)$ is sufficiently large. 	
%	A standard extension procedure allows to construct a positive, smooth barrier $z$ satisfying \eqref{barrier2}, coinciding with $w$ outside a ball of radius one centered at $o$, provided one adjusts appropriately the constant $\kappa$ in \eqref{barrier2}.
\end{proof}

We now show that, under the same assumptions as in Theorem \ref{thmexi}, if there exist two solutions to \eqref{e64} then it is possible to construct a third one which is smaller than both.
\begin{lem}\label{lemma-ordering}
	Let $ M $ be an $N$-dimensional Cartan-Hadamard manifold such that
	$$
	\mathrm{Ric}_o(x) \ge -(N-1) \, \frac{\psi''(r)}{\psi(r)} \qquad \forall x \equiv (r,\theta) \in \mathbb{R}^+ \times \mathbb{S}^{N-1} \, ,
	$$
	for some $ o \in M $, where $\psi$ is the model function associated with an $N$-dimensional stochastically complete Cartan-Hadamard model manifold. Let $ u_0 \in X_{\infty,\psi} $ and $T>0$. Suppose that $ u $ and $ v $ are two solutions to  \eqref{e64} such that
	\begin{equation}\label{lemma-ordering-1}
	u,v \in L^\infty\left( (0,T) ; X_{\infty,\psi} \right) .
	\end{equation}
	Then there exists $ S \in (0,T) $, depending only on $m$ and
	\begin{equation*}\label{lemma-ordering-2}
	L:=\max \left\{  \| u \|_{L^\infty\left( (0,T) ; X_{\infty,\psi} \right)} ,  \| v \|_{L^\infty\left( (0,T) ; X_{\infty,\psi} \right)} \right\} ,
	\end{equation*}
	and a solution $ \tilde{u} \in L^\infty\left( (0,S) ; X_{\infty,\psi} \right)  $ to \eqref{e64} satisfying
	$$
	\tilde{u} \le u \quad \text{and} \quad \tilde{u} \le v \qquad \text{a.e.~in } M \times (0,S) \, .
	$$
\end{lem}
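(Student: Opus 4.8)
The plan is to reproduce the approximation scheme used in the proof of Theorem~\ref{thmexi}, the only new ingredient being the choice of boundary data for the truncated problems, which is now dictated by a \emph{negative barrier} so that every approximant automatically lies below both $u$ and $v$. First I would fix $L$ as in the statement and consider the supersolution $\overline u$ of the form \eqref{lemm-A-ubar} produced by Lemma~\ref{lemm-A}, but with the coefficient $\| u_0\|_{\infty,b}$ replaced by $L$ (and, say, $b=0$): it is then defined and smooth on $M\times[0,S_0)$ for some $S_0=S_0(m,L)>0$, by \eqref{cor-c} it satisfies $\overline u\ge L\,[H+1]^{\frac1{m-1}}$, and $-\overline u$ is a subsolution to \eqref{e64} (same computation as in Lemma~\ref{lemm-A}). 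Since $\|u(t)\|_{\infty,0},\|v(t)\|_{\infty,0}\le L$ for a.e.\ $t\in(0,T)$, this already gives
\[
-\overline u\le u\wedge v\le u\vee v\le\overline u\qquad\text{a.e.\ in }M\times(0,S_0\wedge T)\,.
\]
I would then fix any $S\in(0,S_0\wedge T)$; the threshold $S_0$ depends only on $m$ and $L$, and on $[0,S]$ the time factor of $\overline u$ remains bounded, so that $\overline u\in L^\infty((0,S);X_{\infty,\psi})$ and, in particular, $-\overline u$ too.

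Next, exactly as in the proof of Theorem~\ref{thmexi}, for $n,i,j\in\mathbb N$ I would set $u_{i,j,0}:=(u_0^+\wedge i)-(u_0^-\wedge j)\in L^\infty(M)$ and let $w_{n,i,j}$ be the bounded solution of the Cauchy--Dirichlet problem on $B_n\times(0,S)$ with datum $u_{i,j,0}$ at $t=0$ and boundary value $-\overline u$ on $\partial B_n\times(0,S)$ (the data being compatible, as $\overline u(\cdot,0)$ dominates $|u_{i,j,0}|$). Bounded-domain comparison — between $w_{n,i,j}$ and the smooth sub/supersolutions $\pm\overline u$, and between $w_{n,i,j}$ for consecutive values of the parameters — then yields, a.e.\ in $B_n\times(0,S)$,
\[
-\overline u\le w_{n,i,j}\le\overline u\,,\qquad w_{n,i,j}\le w_{n+1,i,j}\,,\qquad w_{n,i,j}\le w_{n,i+1,j}\,,\qquad w_{n,i,j}\ge w_{n,i,j+1}\,,
\]
the monotonicity in $n$ relying on the fact that $-\overline u$, being a subsolution on $B_{n+1}$ that coincides with $w_{n+1,i,j}$ on $\partial B_{n+1}$, lies below $w_{n+1,i,j}$ throughout $B_{n+1}$, hence on $\partial B_n$ as well.

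I would then pass to the limit as in Theorem~\ref{thmexi}: extending each $w_{n,i,j}$ by $-\overline u$ outside $B_n$ makes the family monotone increasing in $n$ and dominated by $\overline u$, so its pointwise limit $w_{i,j}$ is a bounded very weak solution on $M\times(0,S)$ with datum $u_{i,j,0}$, satisfying $-\overline u\le w_{i,j}\le u\wedge v\wedge\overline u$. Letting $i\to\infty$ (increasing) yields $w_j$ with datum $u_0^+-(u_0^-\wedge j)$, and letting $j\to\infty$ (decreasing, still bounded below by $-\overline u$) yields $\tilde u:=\lim_j w_j$, a very weak solution of \eqref{e64} with datum $u_0^+-u_0^-=u_0$ such that $\tilde u\le u$ and $\tilde u\le v$ a.e.\ in $M\times(0,S)$. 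Finally, $|\tilde u|\le\overline u\in L^\infty((0,S);X_{\infty,\psi})$ forces $\tilde u\in L^\infty((0,S);X_{\infty,\psi})$, and $S$ was chosen to depend only on $m$ and $L$, as required.

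The main obstacle is exactly the comparison $w_{n,i,j}\le u$ (and $w_{n,i,j}\le v$): here, unlike in Theorem~\ref{thmexi}, $u$ is an arbitrary distributional solution on $M$, with no a priori trace on $\partial B_n$, so it cannot be compared with $w_{n,i,j}$ on $B_n\times(0,S)$ directly. I would circumvent this as follows: the function $\Phi$ equal to $w_{n,i,j}$ in $B_n$ and to $-\overline u$ in $M\setminus B_n$ is a \emph{global} very weak subsolution — the gluing across $\partial B_n$ being admissible because $w_{n,i,j}\ge-\overline u$ in $B_n$ with equality on $\partial B_n$, so that the outward normal derivative of $\Phi^m$ cannot jump upward there — and $\Phi(\cdot,0)\le u_0$ on all of $M$; then, for every $R>n$, one compares the \emph{bounded} subsolution $\Phi$ with $u$ on the bounded cylinder $B_R\times(0,S)$ — this being legitimate since $\Phi=-\overline u\le u$ a.e.\ in a neighbourhood of $\partial B_R$ and the initial data are ordered — and lets $R\to\infty$ to conclude $w_{n,i,j}=\Phi\le u$ in $B_n\times(0,S)$; the case of $v$ is identical. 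This argument only uses bounded-domain comparison for the porous medium equation (together, if one wishes, with the interior continuity of $u$ to give a meaning to its values on $\partial B_R$); in particular it does \emph{not} invoke any uniqueness result on $M$, so there is no circularity. Everything else is the routine monotone-limit bookkeeping already carried out in the proof of Theorem~\ref{thmexi}.
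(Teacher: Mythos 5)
Your overall scheme is the paper's own: the same barrier $\overline u$ from Lemma~\ref{lemm-A} with $\|u_0\|_{\infty,b}$ replaced by $L$, the same Cauchy--Dirichlet approximations on balls with lateral datum $-\overline u$, and the same monotone passage to the limit; your gluing of $w_{n,i,j}$ with $-\overline u$ into a global subsolution is a reasonable way of spelling out the ``local comparison principle for very weak sub/supersolutions'' that the paper invokes by reference at the corresponding step.

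There is, however, one concrete error. You import from Theorem~\ref{thmexi} the double truncation $u_{i,j,0}=\left(u_0^+\wedge i\right)-\left(u_0^-\wedge j\right)$ and then assert that $\Phi(\cdot,0)\le u_0$ on all of $M$. This is false on the set $\{u_0<-j\}\cap B_n$: there $u_{i,j,0}=\left(u_0^+\wedge i\right)-j=-j>u_0$, because truncating the \emph{negative} part raises the function. Consequently the comparison of $\Phi$ with $u$ on $B_R\times(0,S)$ has no ordered initial data, and the key inequality $w_{n,i,j}\le u$ is not justified once $n$ is so large that $u_0<-j$ somewhere in $B_n$. Your order of limits ($n\to\infty$ first, with $i,j$ fixed) does not rescue this: $w_{i,j}=\sup_n w_{n,i,j}$ may then exceed $u$, $w_j=\sup_i w_{i,j}$ inherits this, and concluding $\inf_j w_j\le u$ would require comparing $u$ with solutions issued from the larger data $u_0^+-\left(u_0^-\wedge j\right)\ge u_0$ --- i.e.\ exactly the kind of global comparison on $M$ that is not available at this stage. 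The fix is simply to drop the truncation altogether: here $u_0\in X_{\infty,\psi}\subset L^\infty_{loc}(M)$, so $u_0$ is already bounded on each $B_n$ and one may take $u_0$ itself as initial datum in \eqref{h-dir-22}; then $\Phi(\cdot,0)\le u_0$ does hold everywhere and the rest of your argument goes through. This is precisely what the paper does, which is why its proof of this lemma involves only the single index $n$.
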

\begin{proof}
	Thanks to Lemma \ref{lemm-A} with $ b=0 $ and $ \| u_0 \|_{\infty,0} $ replaced $ L $, we find that the function
	$$
	\overline{u}(x,t) := \left( 1 - \frac{t}{S_0} \right)^{-\frac{1}{m-1}} \frac{L}{K_1}  \left[U(r(x)) \right]^{\frac 1 m} \, , \quad S_0 := \frac{K_1^{m-1}}{(m-1) L^{m-1}} \, ,
	$$
	is a supersolution to \eqref{e64} and complies with
	\begin{equation}\label{sup-S}
	\overline{u} \ge \left| u \right| \quad \text{and} \quad \overline{u} \ge \left| v \right|  \qquad \text{a.e.~in } M \times (0,S) \, , \qquad \text{where } S:= \min \left\{ \tfrac{S_0}{2} , T \right\} .
	\end{equation}
	Indeed, as a straightforward consequence of Lemma \ref{l1-rad}, the latter is actually a solution to the differential equation in \eqref{e64}. Moreover, by the definitions of $ \| \cdot \|_{\infty,0} $ and $ L $, and by \eqref{cor-c}, we have:
	$$
	\begin{aligned}
	\left| u(x,t) \right| \le \left\| u(t) \right\|_{\infty,0} \left[ H(r(x))+1 \right]^{\frac 1 {m-1}} \le \frac{L}{K_1} \left[U(r(x)) \right]^{\frac 1 m} \le & \left( 1 - \frac{t}{S_0} \right)^{-\frac{1}{m-1}} \frac{L}{K_1}  \left[U(r(x)) \right]^{\frac 1 m} \\
	= & \, \overline{u}(x,t)
	\end{aligned}
	$$
	for a.e. $ (x,t) \in M \times (0,S)$, and clearly the same bound holds for $ v $. Since $ S \le  S_0 / 2 $, still by \eqref{cor-c} it is plain that $ \overline{u} \in L^\infty\left( (0,S) ; X_{\infty,\psi} \right) $.
	
	In order to construct the claimed solution $ \tilde{u} $, we can proceed similarly to the proof of Theorem \ref{thmexi} and solve first, for every $ n \in \mathbb{N} $, the following Cauchy-Dirichlet problems on balls:
	\begin{equation}\label{h-dir-22}
	\begin{cases}
	\partial_t \! \left( u_{n} \right) = \Delta \! \left(u^m_{n}\right) & \textrm{in } B_n \times (0,S) \,, \\
	u_{n} = - \overline{u} & \textrm{on } \partial B_n \times (0,S) \,, \\
	u_{n} = u_{0}  & \textrm{on } B_n \times \{0\}\,.
	\end{cases}
	\end{equation}
	Thanks to what we have just established, it is apparent that $ -\overline{u} $ is a subsolution to \eqref{h-dir-22} whereas both $ u $ and $v$ are supersolutions. Hence, by virtue of local comparison principles for very weak sub/supersolutions  and to \eqref{sup-S}, we can infer that
	\begin{equation}\label{h-dir-23}
	\overline{u} \ge u,v \ge u_{n+1} \ge u_{n} \ge -\overline{u} \qquad  \text{a.e.~in } B_n \times (0,S) \, , \ \forall n \in \mathbb{N} \, .
	\end{equation}
	Note that \emph{very weak} local comparison principles are quite delicate and need several technicalities to be justified rigorously; this was addressed in detail in \cite[Section 3.4]{GMP-transac} in the case $ m<1 $, but actually the case $m>1$ is simpler since solutions need not be lifted (because the function $ u \mapsto u^m $ is locally Lipschitz). % Accordingly, Riemannian balls in \eqref{h-dir-23} should be replaced by the sublevel sets of a suitable \emph{exhaustion function} for $M$.
	
	Using estimates \eqref{h-dir-23}, which in particular ensure the monotonicity of the sequence $ \{ u_n \} $ (extended to $ -\overline{u} $ outside $ B_n $), it is readily seen that the pointwise limit $ \tilde{u}:=\lim_{n \to \infty} u_n $ is a solution to \eqref{e64} belonging to $ L^\infty\left( (0,S) ; X_{\infty,\psi} \right) $ and  complying with \eqref{lemma-ordering-1}.
\end{proof}

\normalcolor

We are now in position to prove Theorems \ref{thm-modelli} and \ref{thm-non-modelli}, by means of a method first introduced in \cite{GMP-advances} for \emph{bounded} solutions. In order to successfully adapt such a strategy, the structure of the barrier constructed in Lemma \ref{lemma-barriera} will be crucial.

\begin{proof}[Proof of Theorem \ref{thm-modelli}]
	Let $ u $ and $ v $ be any two solutions to \eqref{e64} satisfying \eqref{lemma-ordering-1}. Thanks to Lemma \ref{lemma-ordering}, we know that there exists another solution $ \tilde{u} $ to \eqref{e64}, defined at least up to a time $ S \in (0,T) $ that depends only on $ m $ and the quantity $ L $ as in \eqref{lemma-ordering-1}, which is smaller than both $ u $ and $ v $ and belongs to $ L^\infty((0,S);X_{\infty,\psi}) $. Hence, if we are able to show that $ u=\tilde{u}=v $ up to $ t=S $, then by iterating the procedure finitely many times we infer that $ u=v $ up to $ t=T $. For this reason, in the sequel we can and will assume without loss of generality that $  u \ge v $ a.e.~in $ M_\psi \times (0,T) $.
	
	In the definition of very weak solution, namely identity \eqref{q50} for both $ u $ and $v$, the choice $\phi(x,s)=\chi_{[0,t]}(s) \, \phi_R(x) $ (up to a routine time regularization) yields
	\begin{equation}\label{unique1}
	\int_{M_\psi}\left[u(\cdot,t)-v(\cdot,t)\right]\phi_R \, d \mu=\int_0^t \int_{M_\psi} \left(u^m-v^m\right) \Delta\phi_R \, d\mu ds \qquad \text{for a.e. } t \in (0,T) \, ,
	\end{equation}
    where, for every $R>0$, we pick
    $$\phi_R(x) =  \eta_R(r(x)) \, z(x)  \, , \quad \eta_R(r) := \eta\!\left(\frac{r}R\right) , $$
     $z$ being the barrier function constructed in Lemma \ref{lemma-barriera} (associated with a suitable $ \alpha $ that will be chosen below) and $\eta $ a smooth nonincreasing cut-off function such that $ 0 \le \eta \le 1 $ for all $ r \ge 0 $, $ \eta \equiv 1 $ for $ r \in [0,1] $ and $ \eta \equiv 0 $ for $ r \ge 2 $. A standard computation, combined with \eqref{barrier2}, entails
     \[
	\Delta\phi_R=\eta_R \, \Delta z+z \, \Delta\eta_R+2 \eta_R' \, z' \le \kappa \, \frac {\eta_R\,z}{H+1} + z \, \Delta\eta_R + 2\eta_R' \, z' \qquad \text{in } M_\psi \, ,
	\]
	where the prime symbol stands for derivative w.r.t.~$r$ (recall that $z$ is radial). Therefore, by plugging the above pointwise inequality in the r.h.s.~of \eqref{unique1}, we obtain:
	\begin{equation}\label{unique2}
	\begin{aligned}
	\int_0^t \int_{M_\psi} \left(u^m-v^m\right) \Delta\phi_R \,  d \mu  ds \le  & \,  \kappa \, \int_0^t \int_{M_\psi} \left(u^m-v^m\right) \frac{\eta_R\,z}{H+1}\, d\mu ds\\ & \, +\int_0^t\int_{M_\psi} \left(u^m-v^m\right) z \, \Delta \eta_R\,  d\mu ds \\
	& \, +2\int_0^t\int_{M_\psi} \left(u^m-v^m\right)\eta_R' \, z' \, d\mu ds \\
	\le &  \, \kappa \, \int_0^t \int_{M_\psi} \frac{u^m-v^m}{H+1} \, z \, d\mu ds \\
	     & \,  +\int_0^t\int_{M_\psi} \left(u^m-v^m\right) \left( z \, \eta''_R + 2 \eta_R' \, z' \right)  d\mu ds \, ,
%	     \\
%	&\le C\,\int_0^T\int_M z\eta_R(u-v)\,{\rm d}\mu{\rm d}t\\ &+C\,\int_0^T\int_M\left(1+H^{\frac m{m-1}}\right)\left(|\Delta \eta_R|z+\eta_R'z' \right)\,{\rm d}\mu{\rm d}t,
	\end{aligned}
	\end{equation}
	where in the last inequality we used the fact that $ \eta_R \le 1$ and $ \Delta \eta_R \le \eta_R'' $, consequence of $ \eta_R' $ and \eqref{mm43}. Now we observe that, thanks to \eqref{lemma-ordering-1} and Lagrange's theorem, it holds (for some function $ \xi $ between $ v $ and $u$)
	\begin{equation}\label{unique2-bis}
	u^m-v^m=m\, \xi^{m-1} \left(u-v\right) \le C \left(H+1\right) \left(u-v\right) \le C \left(H+1\right)^{\frac{m}{m-1}} \qquad \text{a.e.~in } M_{\psi} \times (0,T) \, ,
	\end{equation}
	where, here and below, $C$ is a large enough positive constant independent of $ R $ and $t$ (whose explicit value is allowed to change from line to line). On the other hand, by the definition of $ \eta_R $ we readily have
		\begin{equation*}\label{unique2-ter}
		\left| \eta_R' \right|  \le \frac{C}{R} \, \chi_{[R,2R]} \, , \qquad \left| \eta_R'' \right| \le \frac{C}{R^2} \, \chi_{[R,2R]} \, .
		\end{equation*}
	Recalling that for $ r \ge r_0 $ the barrier $ z $ coincides with the function $ w $ defined in \eqref{barrier}, and exploiting \eqref{wprimo}, we thus deduce that
	$$
	\begin{aligned}
	\left| z \, \eta''_R + 2 \eta_R' \, z' \right| = & \, H^{-\alpha} \, \psi^{-(N-1)} \left|  H' \, \eta''_R + 2 \eta_R' \left[2H''-1- \alpha \, \frac{\left(H'\right)^2}{H}\right] \right| \\
	\le & \, C \, H^{-\alpha} \, \psi^{-(N-1)}  \left( \frac{H'}{R^2} + \frac 1 R  \right) \chi_{[R.2R]}
	\end{aligned}
	$$
	provided $ R $ is large enough, where in the last passage we have taken advantage of \eqref{eq-der-H} and the fact that $|H''|$ is bounded, consequence of  \eqref{accasecondo} and Lemma \ref{h-terza-h-seconda}. Since, on a model manifold, we have $ d\mu = \psi^{N-1} dr \otimes d\mu_{\mathbb{S}^{N-1}} $, the above inequality and \eqref{unique2-bis} yield (still for large $ R $)
%	We shall now deal with the last term in \eqref{unique1}, showing that it tends to zero as $R\to+\infty$ provided $\alpha$ is sufficiently large. In fact we have:
%	\[
%	\eta_R'(r)=\frac1R\eta'\left(\frac rR\right)\chi_{[R,2R]}(r),\qquad \Delta\eta_R(r)=\left[\frac1{R^2}\eta''\left(\frac rR\right)+\frac{N-1}R\frac{\psi'(r)}{\psi(r)}\eta'\left(\frac rR\right)\right]\chi_{[R,2R]}(r).
%	\]
	\[
	\begin{aligned}
	\left| \int_0^t\int_{M_\psi} \left(u^m-v^m\right) \left( z \, \eta''_R + 2 \eta_R' \, z' \right)  d\mu ds \right| \le & \, \frac C{R^2}\int_R^{2R}
	\frac{H'}{H^{\alpha-\frac m{m-1}}}\,{dr} + \frac C{R}\int_R^{2R}	\frac{1}{H^{\alpha-\frac m{m-1}}}\,{dr} \, .\\
	\end{aligned}
\]
If we now fix any $ \alpha>\frac{m}{m-1}+1 $ and use the fact that $ H(r) $ is increasing, we can bound from above the r.h.s~by the quantity
$$
\frac{C}{R^2 H(R)^{\alpha -\frac{m}{m-1} -1}} + \frac{C}{H(R)^{\alpha-\frac{m}{m-1}}} \, ,
$$
	which vanishes as $ R \to + \infty $ because $ M_\psi $ is stochastically complete, which is equivalent to $ \lim_{r\to+\infty}H(r)=+\infty $. As a result, going back to \eqref{unique2} and exploiting the middle inequality in \eqref{unique2-bis}, we infer that
	$$
 \limsup_{R \to + \infty}	\int_0^t \int_{M_\psi} \left(u^m-v^m\right) \Delta\phi_R \,  d \mu  ds \le C \, \int_0^t \int_{M_\psi} \left( u-v \right) z \, d\mu ds \, .
	$$
%	where we have used that $z$ can be taken to be proportional to $w:=H'\,H^{-\alpha}\psi^{-(N-1)}$ for large $r$, that in polar coordinates the radial part of the Riemannian metric can be written as $\psi^{N-1}(r)\,{\rm d}r$, the expression \eqref{wprimo} for $w'$, hence for $z'$ for large $r$. To deal with the r.h.s. of the above inequality we first recall that, by \eqref{accasecondo} and the definition of $H$, one has
%	\[
%	H''(r)=1-(N-1)\frac{\psi'(r)}{\psi(r)}H'(r)=1-(N-1)\frac{\psi'(r)}{\psi^N(r)}\int_0^r\psi^{N-1}(\xi)\,{\rm d}\xi\le1
%	\]
%	for all $r\ge0$, since $\psi$ is increasing. Hence, by the running assumption $H''(r)\ge -K$ for a suitable $K>0$ and for all $r\ge0$, it follows that $H''$ is bounded. Besides, the same expression \eqref{accasecondo} for $H''$ shows that the validity of $-H''\le K$ is equivalent to the validity of $\frac{\psi'}{\psi}H'\le \widetilde K$ for some constant $\widetilde K$ and, finally, that as already mentioned, by formula (4.10) of \cite{GMP} one has $\left(H'\right)^2\le 2H$.
	It is readily seen that, with the above choices, $ (u-v)z \in L^1(M_\psi \times (0,T)) $. Therefore, passing to the limit as $R\to+\infty$ in \eqref{unique1} we end up with the integral inequality
	\begin{equation}\label{almost-last-int}
		\int_{M_\psi}\left[u(\cdot,t)-v(\cdot,t)\right] z \, d \mu \le C \, \int_0^t \int_{M_\psi} \left( u-v \right) z \, d\mu ds \qquad \text{for a.e. } t \in (0,T) \, ,
		\end{equation}
		which, upon integration with an exponential time factor, entails
		$$
		\int_0^t \int_{M_\psi} \left( u-v \right) z \, d\mu ds \le 0 \qquad \forall t \in (0,T) \, ,
		$$
		namely the thesis.
%	We can thus conclude that, since $H$ is by construction increasing, it tends to $+\infty$ as $r\to+\infty$ because $M$ is assumed to be stochastically complete, and provided $\alpha$ is large enough:
%	\[\begin{aligned}
%	\int_M\left(1+H^{\frac m{m-1}}\right)\left(|\Delta \eta_R|z+\eta_R'z' \right)\,{\rm d}\mu&\le\frac CR\int_R^{2R}\left(\frac{H'}{H^{\alpha-\frac m{m-1}}}+\frac{1}{H^{\alpha-\frac m{m-1}}}\right)\,{\rm d}r\\
%	&\le \frac{C}{R}\left(\frac{1}{H^{\alpha-\frac m{m-1}-1}(2R)}-\frac{1}{H^{\alpha-\frac m{m-1}-1}(R)}+\frac R{H^{\alpha-\frac m{m-1}}(R)}\right)\\
%	&\mathop{\longrightarrow}_{R\to+\infty}0.
%	\end{aligned}\]
%	Summing up we have shown that for two solutions $u,v$ as in the statement and a.e. any fixed $T>0$, the inequality
%	\begin{equation*}\label{unique3}
%	0\le \int_M\left[u(T)-v(T)\right]z\,{\rm d}\mu\le C\int_0^T\int_M(u-v)z\,{\rm d}\mu{\rm d}t
%	\end{equation*}
%	for a suitable $C>0$. A Gromwall-type argument then shows that the r.h.s., that is nonnegative and vanishes at $T=0$, must vanish for all $T>0$. Therefore, $u=v$ as claimed.
\end{proof}

\begin{oss}\label{models-example1} \rm A straightforward computation shows that, in fact,
	\[
	\psi(r) = C \, e^{\frac{1}{N-1} \, \int_1^r \frac{1-H''(s)}{H'(s)} \, ds }  \qquad \forall r \ge 1
	\]
	for a suitable $C>0 $. In particular, if we impose that for some $ \sigma \in [0,2) $ it holds
	\begin{equation}\label{ee-adv-1}
	H(r) \asymp \begin{cases} r^\sigma & \text{if } \sigma \neq 0 \, , \\ \log r & \text{if } \sigma=0 \, , \end{cases}  \ \ H'(r) \asymp r^{\sigma-1} \, , \ \ H''(r) = o(1)  \, , \ \  H'''(r) =\begin{cases} o\!\left( \frac{1}{r} \right) & \text{if } \sigma \neq 0 \, , \\  o\!\left( \frac{1}{r\log r} \right) & \text{if } \sigma=0 \, , \end{cases}
	\end{equation}
as $ r \to +\infty $, then $ \lim_{r \to +\infty} H(r) = +\infty $ and assumption \eqref{ipotesiH-enunciato} is satisfied. Moreover, another elementary calculation taking advantage of \eqref{ee-adv-1} yields
\begin{equation*}\label{ricci-general}
\frac{\psi''(r)}{\psi(r)} \asymp r^{2(1-\sigma)} \qquad \text{as } r \to + \infty \, ,
\end{equation*}
so that in particular, having in mind \eqref{thmexi-ricci} in Theorem \ref{thmexi}, on model manifolds we are able to treat, at the level of both existence and uniqueness, curvature bounds of the form (for any $ C_0>0 $)
\begin{equation}\label{ricci-general-two}
\operatorname{Ric}_o(x)\geq - C_0 \left( 1+ r(x) \right)^{2(1-\sigma)} \qquad \forall  x\in M\setminus\{o\} \, ,
\end{equation}
which were already addressed in \cite{GMP-pures}. However, such a paper did not cover the critical case $ \sigma=0 $, corresponding to a \emph{quadratic} negative curvature. The latter was actually covered in the subsequent paper \cite{MP}, but uniqueness was established under a stronger curvature bound from below involving a logarithmic correction of the squared distance (see in particular Theorem 2.7 there).
%
%except in the case $ \sigma=0 $, where the first asymptotic equivalence must be replaced by $ H(r) \asymp \log r  $, and in the case $ \sigma=1 $, where $ H(r) \asymp r $, $ H'(r) \asymp 1 $ and
%	$$
%	H''(r) \asymp r^{\sigma-2} \, , \quad H'''(r) \asymp r^{\sigma-3} \qquad \text{as } r \to +\infty \, .
%	$$
%As a result, in the whole range $ \sigma \in [0,2) $ we have that $ \lim_{r \to +\infty} H(r) = +\infty $ and \eqref{ipotesiH-lemma} is satisfied.
%	\[\operatorname{Ric}_o(x)\geq - C_0 (1+ r(x))^{2(1-\sigma)}\quad \forall\, x\in M\setminus\{o\}\]
%	but assume $\sigma\in [0,2)$, to that the critical case of quadratic explosion of the curvature is included, choosing $\sigma=0$. It is a calculus exercise to show that, as $r\to+\infty$, $H(r)\asymp r^\sigma$ if $\sigma\in (0,2)$, whereas $H(r)\asymp \log r$ if $\sigma=0$. Moreover, corresponding formulas hold for derivative, e.g. $H'(r)\asymp 1/r$ if $\sigma=0$. It is then clear that assumption \eqref{ipotesiH} holds, for large $r$, in the full range $\sigma\in [0,2)$, thus including the case of critical case of quadratic growth of curvature. Notice that the assumed local behaviour of $\psi$ makes \eqref{ipotesiH} true also in a neighborhood of the origin.
\end{oss}

\begin{proof}[Proof of Theorem \ref{thm-non-modelli}]
	We adapt the main ideas behind the proof of Theorem \ref{thm-modelli}. Since the overall strategy is analogous, we will only stress the key points. First of all, we observe that the function $w$ defined in \eqref{barrier} still satisfies the supersolution inequality
	\begin{equation}\label{supsol-phi}
		\Delta_\phi w\le C \, \frac wH \qquad \text{in } M_{\phi} \setminus B_{r_0} \, ,
	\end{equation}
	for a suitable constant $ C>0 $. Indeed, in radial coordinates \eqref{supsol-phi} reads
			\begin{equation}\label{technical3}
	w''(r)+(N-1) \, \frac{\phi'(r)}{\phi(r)} \, w'(r) \le C \, \frac{w(r)}{H(r)} \qquad \forall r \ge r_0 \, ,
	\end{equation}
	which is implied by
		\begin{equation*}\label{technical3-bis}
	w''(r)+(N-1)\left[\frac{\psi'(r)}{\psi(r)}-K \, \frac{H'(r)}{H(r)}\right]w'(r) \le C \, \frac{w(r)}{H(r)}  \qquad \forall r \ge r_0
	\end{equation*}
	thanks to \eqref{perturbation} and the fact that $ w $ is radially decreasing by construction. Hence, by means computations similar to the ones carried out in the proof of Lemma \ref{lemma-barriera} (recall \eqref{wprimo}), it is not difficult to check that \eqref{technical3} does hold provided
	$$
	C \ge  K\left( 2\alpha+3+2\widehat{K} \right) + 2\alpha(\alpha+1) + \alpha+4\alpha\widehat{K} \, .
	$$
	Finally, by arguing exactly as in the last part of the proof of Lemma \ref{lemma-barriera}, for a suitable $ \kappa>0 $ we can construct a global, positive smooth function $z$ satisfying
	\begin{equation}\label{barrier2-mody}
\Delta_\phi z \le \kappa \, \frac{z}{H+1} \qquad \text{in } M_\phi  \, ,
\end{equation}
	which is moreover radially nonincreasing and coincides with $ w $ outside $ B_{r_0}(o) $. By virtue of \eqref{sec} and Laplacian comparison (see in particular \eqref{e3}), we know that
	$$
  \mathsf{m}(r,\theta) \ge (N-1) \, \frac{\phi'(r)}{\phi(r)} \qquad	\forall (r,\theta) \in \mathbb{R}^+ \times \mathbb{S}^{N-1}  \, ,
	$$
	where $ \mathsf{m}(r,\theta) $ is the Laplacian of the distance function on $ M $, in radial coordinates. Hence, in view of the just mentioned monotonicity property of $z$ and formula \eqref{e1}, we readily deduce that the latter, radially transplanted from $ M_\phi $ to $ M $, satisfies the analogue of \eqref{barrier2-mody} on $ M $, namely
		\begin{equation*}\label{barrier2-mody-2}
	\Delta_M z \le \kappa \, \frac{z}{H+1} \qquad \text{in } M \, .
	\end{equation*}
	In order to rigorously complete the proof and obtain the integral inequality (for two ordered solutions $ u \ge v $)
	\begin{equation}\label{supersol-M-uv}
			\int_{M}\left[u(\cdot,t)-v(\cdot,t)\right] z \, d \mu \le \widehat{C} \, \int_0^t \int_{M} \left( u-v \right) z \, d\mu ds \qquad \text{for a.e. } t \in (0,T) \, ,
		\end{equation}
		for a suitable  $ \widehat{C}>0 $, we can exploit the same cut-off arguments as in the proof of Theorem \ref{thm-modelli}. To this aim, it is crucial to take advantage of the surface-comparison inequality \eqref{n50a}, consequence of \eqref{thmexi-ricci}. In particular, for any radial measurable function $f$ on $ M $ we have
		\begin{equation}\label{haus-ee}
 	 \int_M \left| f \right| d\mu	\le \left| \mathbb{S}^{N-1} \right| \int_{\mathbb{R}^+} \left| f(r) \right| \psi(r)^{N-1} \, dr  \, .
		\end{equation}
		Since all the functions involved in the computations from \eqref{unique1} to \eqref{almost-last-int} are radial or can be bounded pointwise by radial functions, using \eqref{haus-ee} it is straightforward to check that cut-off remainder integral terms still vanish as $ R \to +\infty $ (note that the inequality $ \Delta_M \eta_R \le \eta_R'' $ continues to hold since $ \mathsf{m}>0 $). Therefore, we end up with \eqref{supersol-M-uv}, from which the conclusion follows again by integration.
\end{proof}

Finally, in order to prove Corollary \ref{thmuniq-qq}, we need to exhibit some appropriate choices of functions $ \psi $ and $ \phi $ such that \eqref{ricci-below-quadratic} and \eqref{ricci-above-quadratic-sect} can be rewritten in the form \eqref{thmexi-ricci} and \eqref{sec}--\eqref{perturbation}, respectively.

\begin{proof}[Proof of Corollary \ref{thmuniq-qq}]
	First of all, we observe that the function
	$$
	\psi(r) = e^{\frac{C_0}{2} \, r^2 } \qquad \forall r \ge  0
	$$
	satisfies
	$$
	\frac{\psi''(r)}{\psi(r)} = C_0 \left( 1 + C_0 \, r^2 \right) \qquad \forall r > 0 \, ,
	$$
	however it is not an admissible model function because it does not comply with $ \psi(0)=0 $ and $ \psi'(0)=1 $. Nevertheless, it is not difficult to check that one can choose three constants $ a,b,c>0 $ in such a way that the function (which we still call $ \psi $ for notational convenience)
		\begin{equation*}\label{psi-second-hyp}
	{\psi}(r) =
	\begin{cases}
	 \frac 1 c \sinh(c r)  & \text{for } r \in [0,1) \, , \\
	   a \, e^{\frac{C_0}{2} \, r^2 } - b & \text{for } r \ge 1 \, ,
	\end{cases}
	\end{equation*}
	is $ C^1([0,+\infty)) \cap C^2([0,1) \cup (1,+\infty))  $, satisfies $ \psi(0)=0 $, $ \psi'(0)=1 $, and
	\begin{equation*}\label{psi-second-ex}
	\frac{\psi''(r)}{\psi(r)}  \ge  C_0 \left( 1 + C_0 \, r^2 \right) \qquad \forall r \in (0,1) \cup (1,+\infty) \, .
	\end{equation*}
	In particular, due to \eqref{ricci-below-quadratic}, we can infer that \eqref{thmexi-ricci} holds. The fact that $ \psi''(r) $ has a jump at $ r=1 $ is irrelevant to our purposes (it can be overcome by a standard local regularization argument), hence we will disregard it from now on. Moreover, from the explicit expression of $ \psi $ we obtain
		\begin{equation}\label{psi-second-prime}
	\frac{\psi'(r)}{\psi(r)}  = C_0 \, r + \frac{b \, C_0 \, r}{ a \, e^{\frac{C_0}{2} \, r^2 } - b} \qquad \forall r \ge 1 \, .
	\end{equation}
	Furthermore, an elementary but lengthy computation, in the spirit of Remark \ref{models-example1}, shows that the function $ H $ associated with $ \psi  $ is such that $ H(r) \asymp \log r $ and $ H'(r)  \asymp \frac 1 r  $ as $ r \to +\infty $. As a consequence, for an initial datum $ u_0 $ as in \eqref{initial-log}, we can apply Theorem \ref{thmexi} to assert that there exists at least one solution to \eqref{e64} fulfilling \eqref{initial-log-sol}. In order to deal with uniqueness, we take advantage of Theorem  \ref{thm-non-modelli}. To this purpose, consider the function
	$$
	\phi(r) = A \, \frac{e^{\frac{C_0}{2} \, r^2 }}{\left( \log r \right)^\kappa} + B  \, ,
	$$
	defined for every $ r >1 $, where $ A,B>0$ are suitable constants to be chosen and $ \kappa > K/C_0 $ is fixed. Direct calculations yield
			\begin{equation}\label{psi-second-prime-bis}
			 \frac{\phi'(r)}{\phi(r)} \ge C_0 \, r - \frac{B \, C_0}{A} \frac{r \left( \log r \right)^\kappa}{e^{\frac{C_0}{2} \, r^2 }}   -  \frac{\kappa}{r \log r}
			 % C_0 \, r  - \frac{B \, C_0 \, r}{A \, \frac{e^{\frac{C_0}{2} \, r^2 }}{\left( \log r \right)^\kappa} + B}  - \frac{\kappa}{r \log r} \, \frac{A \, e^{\frac{C_0}{2} \, r^2 }}{A \, e^{\frac{C_0}{2} \, r^2 } + B \left( \log r \right)^\kappa }
			\end{equation}
	and
				\begin{equation}\label{psi-second-prime-ter}
				\begin{aligned}
\frac{\phi''(r)}{\phi(r)} = \frac{A \left[ \frac{e^{\frac{C_0}{2} \, r^2 }}{\left( \log r \right)^\kappa}  \right]''}{A \, \frac{e^{\frac{C_0}{2} \, r^2 }}{\left( \log r \right)^\kappa} + B } \le \frac{ \left[ \frac{e^{\frac{C_0}{2} \, r^2 }}{\left( \log r \right)^\kappa}  \right]''}{ \frac{e^{\frac{C_0}{2} \, r^2 }}{\left( \log r \right)^\kappa}  } = & \, C_0 \left( 1 + C_0 \, r^2 \right) + \frac{\kappa}{r^2  \log r  } \left( 1 + \frac{\kappa+1}{\log r} \right) - \frac{2\kappa \, C_0}{\log r} \, ,  \\
\le & \, C_0 \left( 1 + C_0 \, r^2 \right) - \frac{K}{\log r}
\end{aligned}
	\end{equation}
	provided $ r>R $ for a suitable $ R>1 \vee r_0 $ large enough independent of $ A $ and $B$. Although such a $ \phi $ is not an admissible model function near the origin, it is possible to find $ R_0 > R $  and $ A,B>0 $ in such a way that the new function (that we do not relabel)
	 		\begin{equation*}\label{psi-second-hyp-bis}
	 {\phi}(r) =
	 \begin{cases}
	 r  & \text{for } r \in [0,R_0) \, , \\
	 A \, \frac{e^{\frac{C_0}{2} \, r^2 }}{\left( \log r \right)^\kappa} + B & \text{for } r \ge R_0 \, ,
	 \end{cases}
	 \end{equation*}
	is admissible and $ C^1([0,+\infty)) \cap C^2([0,R_0) \cup (R_0,+\infty))  $, which in particular satisfies \eqref{psi-second-prime-ter} for $ r>R_0 $ and $ \phi''(r)=0 $ for $ r \in [0,R_0) $ (again, the fact that the second derivative has a jump at $ r=R_0 $ is inessential). Due to \eqref{ricci-above-quadratic-sect}, we therefore deduce that \eqref{sec} holds with this choice. Finally, thanks to \eqref{psi-second-prime}, \eqref{psi-second-prime-bis} and the just mentioned asymptotic behavior of both $ H(r) $ and $ H'(r) $ as $ r \to +\infty $, we can infer that also \eqref{perturbation} holds for suitable constants $ K>\kappa $ and $ r_0>R_0 $. Hence, Theorem \ref{thm-non-modelli} is applicable and uniqueness is thus established.
	\end{proof}

\begin{oss}\label{models-example-bis} \rm
	The purpose of Corollary \ref{thmuniq-qq} is to exhibit a significant example where our well-posedness theory works even for certain Cartan-Hadamard manifolds that are not models. As already mentioned, the case of negative quadratic curvature is particularly relevant since, at the level of powers, it corresponds to a critical growth ensuring stochastic completeness (a faster power growth yields stochastic \emph{incompleteness}, see e.g.~\cite[Introduction]{GMP-transac}). Corollary \ref{thmuniq-qq} shows that a small enough perturbation of the quadratic curvature, of order ${\left(\log r\right)^{-1}} $, is allowed. If, instead, the bound on the Ricci curvaure is of the form \eqref{ricci-general-two} for some $ \sigma \in (0,2) $, then by means of similar computations one can check that the rate of the admissible perturbation is $ r^{-\sigma} $. Clearly, more general examples could be constructed: to this purpose, it is useful to observe that the ansatz
	$$
	\phi(r) = \frac{\psi(r)}{H(r)^\kappa} \, ,
	$$
	for a suitable $ \kappa>0 $ and $ r $ large enough, often provides the correct function (up to small modifications) complying with \eqref{perturbation} and satisfying $  {\phi''}/{\phi} \le {\psi''}/{\psi}  $.
	\end{oss}

\subsection{Second strategy}\label{strategy-fabio}

The strategy we will exploit to prove Theorem \ref{thm-non-modelli-fabio} follows the lines of proof of \cite[Theorem 2.3]{GMP-pures}, and goes back to a powerful duality method introduced and developed in \cite{ACP,Pierre,BCP}. Nonetheless, some nontrivial modifications are required due to the more general curvature assumption \eqref{thmexi-ricci}.

\begin{proof}[Proof of Theorem \ref{thm-non-modelli-fabio}]
In order to simplify the discussion, we make the extra assumption  $ u , v \in C(M \times [0, {T}])$: if this is not the case, some technical modifications that rely on local cutoff arguments, which to the present purposes are inessential, should be implemented (see the beginning of the proof of \cite[Theorem 2.3]{GMP-pures} for more details). Hence, if both $u$ and $v$ satisfy \eqref{q50} and are continuous, an integration by parts (up to a further cutoff approximation) yields
\begin{equation}\label{n3}
\begin{aligned}
& \int_0^{{T}} \int_{B_R} \left[ \left(u - v\right) \xi_t + \left(u^m - v^m\right) \Delta \xi \right] d\mu dt - \int_0^{{T}} \int_{S_R} \left(u^m-v^m\right) \frac{\partial \xi}{\partial \nu} \, d\mu_{N-1} dt \\
= & \int_{B_R} \left[ u(x,{T}) - v(x,{T}) \right] \xi(x,{T}) \, d\mu(x) \, ,
\end{aligned}
\end{equation}
for all $ R>0 $ and every $\xi\in C^\infty\!\left(\overline{B}_R \times [0, {T}]\right)$ such that $ \xi\equiv 0 $ on $ S_R \times (0,{T}) $, where $\nu$ is the unit outward normal vector to $S_R$. Let us set
\begin{equation}\label{n4}
a(x,t):=
\begin{cases}
\frac{u^m(x,t) - v^m(x,t)}{u(x,t)- v(x,t)} & \text{if } u(x,t)\neq v(x,t) \, , \\
0 & \text{if } u(x,t)=v(x,t) \, .
\end{cases}
\end{equation}
In view of \eqref{e20z}, it is readily seen that
\begin{equation*}\label{n5}
0 \leq  a(x,t) \leq C_1  \, \mathcal B(r(x)) \qquad \forall (x,t) \in M \times [0, {T}]
\end{equation*}
for some constant $C_1>0$ depending on $ C $ and $ m $. By \eqref{n4}, we can rewrite \eqref{n3} as follows:
\begin{equation}\label{n6}
\begin{aligned}
& \int_0^{{T}} \int_{B_R} \left(u - v\right) \left( \xi_t + a \, \Delta \xi \right) d\mu dt - \int_0^{{T}} \int_{S_R} \left(u^m-v^m\right) \frac{\partial \xi}{\partial \nu} \, d\mu_{N-1} dt \\
= & \int_{B_R} \left[ u(x,{T}) - v(x,{T}) \right] \xi(x,{T}) \, d\mu(x) \, .
\end{aligned}
\end{equation}
For any given $R_0 \ge  2 $, let $R \ge  R_0+1 \ge 3 $. We now consider a family $ \{ a_n \} $ of smooth approximations of $ a $ (possibly depending on $R$) such that
\begin{equation}\label{n7}
\left\{a_n\right\} \subset C^\infty(M\times [0, {T}]) \, , \qquad a_n(x,t)>0 \quad \forall(x,t) \in \overline{B}_R \times [0,T] \, , \ \forall n \in \mathbb N \, .
\end{equation}
Furthermore, we can assume without loss of generality that there exist $n_0=n_0(R) \in \mathbb{N}$  and $ C_2=C_2(C_1)>0 $ such that
\begin{equation}\label{n20}
a_n(x,t) \leq C_2 \, \mathcal B(r(x)) \qquad \forall (x,t) \in B_R \times (0, {T}) \, , \ \forall n \ge n_0 \, .
\end{equation}
For the details of such a construction, we refer again to the proof of \cite[Theorem 2.3]{GMP-pures}.

Let $ \omega$ be an arbitrary smooth function on $M$ such that
\begin{equation}\label{n8}
\omega\in C^\infty_c(M) \, , \qquad 0 \leq \omega\leq 1 \quad \text{in } M \, , \qquad \omega \equiv 0 \quad \text{in } M \setminus B_{R_0} \, .
\end{equation}
For each $n\in \mathbb N$, thanks to the above assumptions and standard parabolic theory, there exists a unique classical solution $\xi_n$ to the backward problem
\begin{equation}\label{n9}
\begin{cases}
\partial_t \xi_n + a_n \, \Delta \xi_n = 0 & \text{in } B_R \times (0, {T}) \, , \\
\xi_n = 0 & \text{on } \partial B_R \times (0, {T}) \, , \\
\xi_n = \omega & \text{on } B_R \times \{ {T}\} \, .
\end{cases}
\end{equation}
If we plug $\xi \equiv \xi_n$ in \eqref{n6} and exploit \eqref{n9}, we obtain:
\begin{equation}\label{n14}
\begin{aligned}
& \int_0^{{T}} \int_{B_R} \left(u - v\right) \left( a_n - a \right)  \Delta \xi_n \, d\mu dt - \int_0^{{T}} \int_{S_R} \left(u^m-v^m\right) \frac{\partial \xi_n}{\partial \nu} \, d\mu_{N-1} dt \\
= & \int_{B_R} \left[ u(x,{T}) - v(x,{T}) \right] \omega(x) \, d\mu(x) \, .
\end{aligned}
\end{equation}
Let us label the two integral quantities in the first line of \eqref{n14}:
\begin{equation}\label{n15}
I_{n}(R) := \int_0^{{T}} \int_{B_R} \left(u - v\right) \left( a_n - a \right)  \Delta \xi_n \, d\mu dt \, ,
\end{equation}
\begin{equation*}\label{n16}
J_{n}(R) := - \int_0^{{T}} \int_{S_R} \left(u^m-v^m\right) \frac{\partial \xi_n}{\partial \nu} \, d\mu_{N-1} dt \, .
\end{equation*}
In view of condition \eqref{e20z}, we can infer that
\begin{equation}\label{n50aa}
\begin{aligned}
\left| J_n(R) \right|  \leq & \,  {T} \, \mu_{N-1}(S_R) \sup_{S_R \times (0, {T})} \left|\frac{\partial \xi_n}{\partial \nu}\right|\sup_{S_R\times(0, {T})} \left| u^m - v^m \right| \\
\leq & \, 2  C^m \left[ \mathcal B(R) \right]^{\frac m{m-1}} \mu_{N-1}(S_R) \sup_{S_R\times(0, {T})} \left|\frac{\partial \xi_n}{\partial \nu}\right| .
\end{aligned}
\end{equation}
In order to obtain a good bound for the normal derivative of $ \xi_n $ appearing in \eqref{n50aa}, first of all we set
\[
\vartheta(r) := \log[\psi(r)] \qquad \forall  r\geq R_0 \, .
\]
We then claim that the function
\begin{equation}\label{n21}
\eta(r, t):= \lambda \, \exp\left\{-\frac{K}{2 {T}- t} \, \vartheta(r)\right\} \qquad \forall(r,t)\in [R_0,+\infty) \times [0, {T}]
\end{equation}
satisfies, for all $\lambda>0$, the differential inequality
\begin{equation}\label{n33}
\eta_t + a_n \, \Delta \eta \leq 0 \qquad \text{in } \left( B_{R} \setminus \overline{B}_{R_0} \right) \times (0, {T}) \, ,
\end{equation}
provided $ K>0 $ and $ T $ fulfill suitable estimates. To prove the claim, let us observe that
\begin{equation}\label{n22}
\eta_t(r, t) = - \frac{ K}{(2{T}-t)^2} \, \vartheta(r) \, \eta(r, t) < 0 \qquad \forall(r,t)\in [R_0,+\infty) \times [0, {T}] \, .
\end{equation}
Furthermore,  thanks to \eqref{e1}, \eqref{e5} the fact that $ \psi' , \psi''\geq 0$ and the definition of $ \vartheta(r) $, we obtain:
\begin{equation}\label{f2}
\Delta \vartheta(r)=\frac{\psi''(r)}{\psi(r)}-\left[\frac{\psi'(r)}{\psi(r)}\right]^2+ \mathsf{m}(r, \theta) 	\, \frac{\psi'(r)}{\psi(r)}\geq -\left[\frac{\psi'(r)}{\psi(r)}\right]^2 \qquad \forall r\geq R_0 \, , \ \forall \theta \in \mathbb{S}^{N-1} \, .
\end{equation}
Recalling that for any function $f$ of class $C^2$ it holds
\[
\Delta\big(e^f\big)=e^f \, \Delta f + e^f \left| \nabla f \right|^2 ,
\]
we readily deduce that
\begin{equation}\label{f3}
\Delta \eta(r, t) = -\eta(r,t)  \, \frac{K}{2 T-t} \, \Delta \vartheta(r)+\eta(r,t) \left(\frac{K}{2 T-t} \right)^2 \left|\nabla \vartheta (r) \right|^2  \qquad \forall(r,t)\in [R_0,+\infty) \times [0, {T}]  \, .
\end{equation}
From \eqref{n20}, \eqref{n22}, \eqref{f2} and \eqref{f3} we thus infer that
\begin{equation*}\label{f1}
\begin{aligned}
\eta_t + a_n \, \Delta \eta = & \,  -\frac{K \eta \, \vartheta}{\left(2 T-t\right)^2} + a_n \, \eta \left[ -\frac{K}{2 T-t} \, \Delta\vartheta+\left(\frac{K}{2 T-t}\right)^2 \left|\nabla \vartheta \right|^2 \right] \\
\leq & \,   -\frac{K \eta \, \vartheta}{\left(2 T-t\right)^2} + a_n \, \eta  \left[\frac{ K}{2 T-t}\left(\frac{\psi'}{\psi}\right)^2+\left(\frac{K}{2 T-t}\right)^2\left(\frac{\psi'}{\psi}\right)^2 \right]\\
\leq & \,  \frac{K \eta}{\left(2 T-t\right)^2}\left[-\vartheta + a_n\left(2 T+K\right)\left(\frac{\psi'}{\psi}\right)^2 \right] \\
\leq & \, \frac{K \eta}{\left(2 T-t\right)^2}\left[-\vartheta + C_2  \left(2 T+K\right) \mathcal{B} \left(\frac{\psi'}{\psi}\right)^2 \right] \\
= & \, - \frac{K \eta \, \vartheta}{\left(2 T-t\right)^2} \left[  1 - C_2  \left(2 T+K\right)  \right] \qquad \text{in } \left( B_{R} \setminus \overline{B}_{R_0} \right) \times (0, {T}) \, ,
\end{aligned}
\end{equation*}
so that \eqref{n33} holds provided
\begin{equation}\label{K-T}
2T+K \le \frac{1}{C_2} \, .
\end{equation}
We now pick $\lambda$ so large that
\[
\lambda \geq \exp \left\{\frac K T  \,\vartheta(R_0) \right\} .
\]
In particular, since $ \| \omega \|_\infty \le 1 $, from \eqref{n21} it follows that
\begin{equation}\label{n33-bis}
\eta \geq \| \omega\|_\infty \qquad \text{on } \partial B_{R_0} \times (0, {T}) \, .
\end{equation}
Therefore, thanks to \eqref{n8}, \eqref{n9}, \eqref{n33}, \eqref{n33-bis} and the fact that  $ \xi_n \le \| \omega \|_\infty $ in $ B_R \times (0,T) $ (consequence of standard comparison), it turns out that $\eta$ is a supersolution to the Cauchy-Dirichlet problem
\begin{equation*}\label{n9b}
\begin{cases}
w_t  + a_n \, \Delta w = 0 & \text{in } \left(B_R \setminus \overline{B}_{R_0} \right) \times (0,{T}) \, , \\
w = 0 &  \text{on } \partial B_R \times (0, {T}) \, , \\
w = \xi_n & \text{on } \partial B_{R_0} \times (0, {T}) \, ,\\
w = 0 & \text{on } \left(B_R \setminus \overline{B}_{R_0} \right) \times \{ {T} \} \, ,
\end{cases}
\end{equation*}
of which $ \xi_n $ is by construction a solution. As a consequence, we have that $ \xi_n \le \eta  $ in $ \left(B_R \setminus \overline{B}_{R_0} \right) \times (0,{T}) $, and by arguing exactly as in the proof of \cite[Theorem 2.3]{GMP-pures} we can deduce the estimate
\begin{equation*}\label{e2lu}
\left|\frac{\partial \xi_n(x,t)}{\partial \nu}\right| \le \frac{N-2}{R} \, \frac{\left( 1-1/R \right)^{N-2}}{1-\left( 1-1/R \right)^{N-2}} \, \eta(R-1, 0) \qquad \forall (x,t) \in \partial B_R \times (0,{T})
\end{equation*}
in the case $ N \ge 3 $, and the estimate
\[
\left|\frac{\partial \xi_n(x,t)}{\partial \nu}\right| \le \frac{\eta(R-1, 0)}{R\left[\log(R)-\log(R-1)\right]} \qquad \forall (x,t) \in \partial B_R \times (0,{T})
\]
in the case $ N = 2 $. Hence, recalling \eqref{n21}, we can assert that there exists a suitable constant $ \widehat C>0 $, depending only on $N $, such that
\begin{equation*}\label{e30aa}
\left|\frac{\partial \xi_n(x,t)}{\partial \nu}\right| \le \widehat C \, \lambda \, \exp\left\{-\frac K {2{T}} \, \vartheta(R-1)\right\}  \qquad \forall (x,t) \in \partial B_R \times (0,{T}) \, .
\end{equation*}
By virtue of \eqref{thmexi-ricci} we know that the surface-comparison inequality \eqref{n50a} holds, so that going back to \eqref{n50aa} we obtain:
\begin{equation*}%\label{n47 }
\begin{aligned}
\left| J_n(R) \right| \leq \, & 2  C^m \widehat{C} \lambda \left| \mathbb{S}^{N-1} \right| \left[ \psi(R) \right]^{N-1} \left[ \psi(R-1) \right]^{-\frac{K}{2T}} \left[ \mathcal B(R) \right]^{\frac m{m-1}}  \\
 \le \, & 2  C^m \widehat{C} \lambda \left| \mathbb{S}^{N-1} \right| \left[ \psi(R) \right]^{N-1+\frac{2m}{m-1}} \left[ \psi(R-1) \right]^{-\frac{K}{2T}}  \\
 \le \, & 2  C^m \widehat{C} \lambda \left| \mathbb{S}^{N-1} \right| \left[ \psi(R) \right]^{N-1+\frac{2m}{m-1} - \frac{K}{2Tl} } ,
\end{aligned}
\end{equation*}
where we used the fact that $ \psi' \ge 1  $ and, in the last passage, assumption \eqref{log}. As a result, because $ \lim_{R \to +\infty} \psi(R)=+\infty $, it follows that
\begin{equation}\label{n52}
\limsup_{R\to +\infty} \limsup_{n\to \infty} \left| J_n(R) \right| =  0
\end{equation}
provided
\begin{equation}\label{n52-bis}
T < \frac{(m-1)K}{2l \left[ (N-1)(m-1)+2m \right]} \, .
\end{equation}
It is easy to check that conditions \eqref{K-T} and \eqref{n52-bis} are compatible and lead to an upper constraint on $T$ that only depends on the constants $ C , l, m , N $.

 As concerns the quantity $ I_n(R) $ introduced in \eqref{n15}, one can reason exactly as in the final part of the proof of \cite[Theorem 2.3]{GMP-pures} and construct the above sequence $ \{ a_n \} $ in such a way that \eqref{n7}, \eqref{n20} hold and in addition
\begin{equation}\label{n53}
\limsup_{n \to \infty} \left| I_n(R) \right| = 0 \qquad \forall R \ge R_0 + 1 \, .
\end{equation}
Indeed, the technique used there is purely local (i.e.~curvature conditions at infinity do not play a role).

Finally, if we let first $ n \to \infty $ and then $ R \to +\infty $ in \eqref{n14}, by taking advantage of \eqref{n52} and \eqref{n53} we end up with the identity
\begin{equation}\label{q11}
\int_{M} \left[ u(x,{T}) - v(x,{T}) \right] \omega(x) \, d\mu(x) = 0 \, .
\end{equation}
Because $R_0$ can be taken arbitrarily large and $ \omega $ is any regular function subject to \eqref{n8}, from \eqref{q11} we infer that $u(\cdot,{T})=v(\cdot,{T})$, and clearly the same holds for every $ t<T $ since the whole argument can be repeated upon replacing $T$ with $ t $. The only constraint we still have to remove is the smallness of ${T}$: to this end, it is enough to apply the short-time uniqueness result a finite number of times.
\end{proof}

The same technique of proof of Theorem \ref{thm-non-modelli-fabio}, with inessential modifications, entails a comparison principle for sub/supersolutions.
\begin{cor}\label{comppr}
Let $ M $ be an $ N $-dimensional Cartan-Hadamard manifold such that \eqref{thmexi-ricci} holds for some $ o \in M $, where $\psi$ is the model function associated with an $N$-dimensional Cartan-Hadamard model manifold satisfying \eqref{log} for some $ l>1 $. Given $ T>0 $, let $u$ and $ v$ be a subsolution and a supersolution, respectively, to problem \eqref{e64} corresponding to the same initial datum $ u_0 \in L^\infty_{loc}(M) $. Suppose in addition that $ u$ and $ v $ comply with the pointwise bound \eqref{e20z}. Then $ u \leq v $ a.e.~in $ M \times (0, T)$.
\end{cor}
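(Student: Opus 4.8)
The plan is to repeat, essentially word for word, the argument in the proof of Theorem~\ref{thm-non-modelli-fabio}, the only substantive changes being that the chain of identities there becomes a chain of inequalities, and that one must check the signs are oriented so as to yield $u\le v$ (rather than the reverse). As a first step I would reduce to the case $u,v\in C(M\times[0,T])$ exactly as in that proof, via the same local cutoff approximation, which requires only inessential modifications in the sub/supersolution setting.

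With $u$ and $v$ continuous, subtracting the supersolution inequality for $v$ from the subsolution inequality for $u$ and integrating by parts on $B_R$ (up to a further cutoff, as in the proof of Theorem~\ref{thm-non-modelli-fabio}) turns \eqref{n3} into
\begin{equation*}
\begin{aligned}
\int_{B_R}\left[u(x,T)-v(x,T)\right]\xi(x,T)\,d\mu(x)\le{}&\int_0^{T}\int_{B_R}\left[(u-v)\,\xi_t+(u^m-v^m)\,\Delta\xi\right]d\mu dt\\
&-\int_0^{T}\int_{S_R}(u^m-v^m)\,\frac{\partial\xi}{\partial\nu}\,d\mu_{N-1}dt
\end{aligned}
\end{equation*}
for every $\xi\in C^\infty\!\left(\overline{B}_R\times[0,T]\right)$ with $\xi\ge 0$ and $\xi\equiv 0$ on $S_R\times(0,T)$; here the requirement $\xi\ge 0$ is exactly what is needed to preserve the inequality under testing, and the initial contributions cancel because $u$ and $v$ share the datum $u_0$. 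The coefficient $a$ of \eqref{n4} still satisfies $0\le a\le C_1\mathcal B(r)$, the lower bound being automatic because $s\mapsto s^m$ is increasing, so that $u^m-v^m$ and $u-v$ always carry the same sign. Rewriting as in \eqref{n6} and plugging $\xi\equiv\xi_n$, where $\xi_n$ is the classical solution of the backward problem \eqref{n9} with nonnegative terminal datum $\omega$ as in \eqref{n8} (so that $\xi_n\ge 0$ by the parabolic maximum principle, $a_n$ being positive), one arrives at an inequality analogous to \eqref{n14}, namely
\begin{equation*}
\int_{B_R}\left[u(x,T)-v(x,T)\right]\omega(x)\,d\mu(x)\le \left|I_n(R)\right|+\left|J_n(R)\right|,
\end{equation*}
with $I_n(R)$ as in \eqref{n15} and $J_n(R)$ the analogously defined surface term.

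From this point on, the proof of Theorem~\ref{thm-non-modelli-fabio} applies verbatim: the barrier $\eta$ of \eqref{n21}, the assumptions \eqref{thmexi-ricci}, \eqref{log}, \eqref{e20z} and the smallness constraints \eqref{K-T}, \eqref{n52-bis} on $T$ give $\limsup_{R\to+\infty}\limsup_{n\to\infty}\left|J_n(R)\right|=0$, while the purely local construction of $\{a_n\}$ gives $\limsup_{n\to\infty}\left|I_n(R)\right|=0$ as in \eqref{n53}; letting first $n\to\infty$ and then $R\to+\infty$ one obtains $\int_M\left[u(x,T)-v(x,T)\right]\omega(x)\,d\mu(x)\le 0$. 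Since $\omega$ is an arbitrary nonnegative function subject to \eqref{n8} and $R_0$ may be taken arbitrarily large, this forces $u(\cdot,T)\le v(\cdot,T)$ a.e.\ in $M$, and running the same argument with $T$ replaced by any $t\in(0,T)$ yields $u\le v$ a.e.\ in $M\times(0,T)$ under the smallness constraint on $T$. The latter is removed by a finite iteration in time, noting that at each step one works with merely ordered initial data, which does not alter the argument since $\xi_n\ge 0$ makes the corresponding extra term $\int_{B_R}\left[u(\cdot,0)-v(\cdot,0)\right]\xi_n(\cdot,0)\,d\mu$ nonpositive. I expect the only genuinely delicate point, compared with the uniqueness proof, to be precisely this bookkeeping of signs; everything else, including the estimates on $I_n(R)$ and $J_n(R)$ and the choice of all parameters, is unchanged and may be quoted verbatim from the proof of Theorem~\ref{thm-non-modelli-fabio}.
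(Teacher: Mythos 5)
Your proposal is correct and is exactly the adaptation the paper has in mind: the paper gives no separate proof of Corollary \ref{comppr}, stating only that the technique of Theorem \ref{thm-non-modelli-fabio} applies with inessential modifications, and your write-up supplies precisely those modifications (nonnegative test functions, $\xi_n\ge 0$ by the maximum principle since $\omega\ge 0$ and $a_n>0$, and the nonpositivity of the initial term when iterating in time with merely ordered data). All the quantitative estimates on $I_n(R)$ and $J_n(R)$ indeed carry over verbatim.
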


\noindent {\bf Acknowledgments.} The authors were partially supported by the PRIN Project  ``Direct and Inverse Problems for Partial Differential Equations: Theoretical Aspects and Applications'' (grant no.~201758MTR2, MIUR, Italy). They also thank the GNAMPA group of the Istituto Nazionale di Alta Matematica (INdAM, Italy).

\end{document}